\theoremstyle{plain}
\newtheorem{theorem}{Theorem}[section]
\newtheorem{cor}[theorem]{Corollary}
\newtheorem{lemma}[theorem]{Lemma}
\newtheorem{proposition}[theorem]{Proposition}
\newtheorem{prop}[theorem]{Proposition}
\theoremstyle{definition}
\newtheorem{definition}[theorem]{Definition}
\theoremstyle{remark}
\newtheorem{remark}[theorem]{Remark}
\newtheorem{remarks}[theorem]{Remarks}
\newtheorem{example}[theorem]{Example}
\newtheorem*{notationnonumber}{Notation}
\numberwithin{equation}{section}
\DeclareMathOperator{\End}{End} 
\DeclareMathOperator{\KP}{KP}
\DeclareMathOperator{\BD}{BD}
\DeclareMathOperator{\lsp}{span}
\DeclareMathOperator{\rank}{rank}
\DeclareMathOperator{\clsp}{\overline{span}}
\DeclareMathOperator{\Aut}{Aut}
\newcommand{\Ind}{\operatorname{\mathrm{Ind}}}
\newcommand{\Res}{\operatorname{\mathrm{Res}}}
\newcommand{\id}{\operatorname{\mathrm{id}}}
\def\C{{\mathcal C}}
\renewcommand{\L}{\Lambda}
\renewcommand{\O}{\Omega}
\def\N{{\mathbb N}}
\def\Z{{\mathbb Z}}
\def\CC{{\mathbb C}}
\def\TT{{\mathbb T}}
\def\QQ{{\mathbb Q}}
\def\l{{\lambda}}
\def\Ll{\mathcal{L}}
\def\Ff{\mathcal{F}}
\def\FF{\mathbb{F}}
\begin{document}
\title[Kumjian-Pask algebras of higher-rank graphs]{Kumjian-Pask algebras of higher-rank graphs}
\author{Gonzalo Aranda Pino}
\address{Gonzalo Aranda Pino: Departamento de \'Algebra, Geometr\'ia y Topolog\'ia, Universidad de M\'alaga, 29071 M\'alaga, Spain}
\email{g.aranda@uma.es}
\author{John Clark}
\author{Astrid an Huef}
\author{Iain Raeburn}
\address{John Clark, Astrid an Huef and Iain Raeburn: Department of Mathematics and Statistics, University of Otago, PO Box 56, Dunedin 9054, New Zealand.}
\email{\{jclark, astrid, iraeburn\}@maths.otago.ac.nz}
\thanks{The results in this paper were obtained during a working seminar at the University of Otago. The authors thank the other participants for their comments and input, and especially Jon Brown, Iain Dangerfield and Robbie Hazlewood.}
\date{22 June 2011}
\begin{abstract}
We introduce higher-rank analogues of the Leavitt path algebras, which we call the Kumjian-Pask algebras. We prove graded and Cuntz-Krieger uniqueness theorems for these algebras, and  analyze their ideal structure.  
\end{abstract}

\subjclass[2000]{Primary 16W50, secondary 46L05}

\keywords{Leavitt path algebra, Cuntz-Krieger algebra, higher-rank graph, graph algebra}
\maketitle

\section{Introduction}

The $C^*$-algebras of infinite directed graphs were first studied in the 1990s \cite{KPRR, KPR} as generalizations of the Cuntz-Krieger algebras of finite $\{0,1\}$-matrices \cite{CK}. The Leavitt path algebras, which are a purely algebraic analogue of graph $C^*$-algebras, were first studied around 2005 \cite{AA1,AMP}. Both families of algebras have been intensively studied by a broad range of researchers, both now have substantial structure theories, and both have proved to be rich sources of interesting examples.

Higher-rank analogues of the Cuntz-Krieger algebras arose first in work of Robertson and Steger \cite{RobS1,RobS2}, and shortly afterwards Kumjian and Pask  \cite{KP} introduced higher-rank graphs (or \emph{$k$-graphs}) to provide a visualisable model for Robertson and Steger's algebras. The higher-rank graph $C^*$-algebras constructed in \cite{KP} have generated a great deal of interest among operator algebraists (for example, \cite{DPY, KrP, P, SZ,Y}), and have broadened the class of $C^*$-algebras that can be realized as graph algebras \cite{DY,KP,PRRS,PRW}. Here we introduce and study an analogue of Leavitt path algebras for higher-rank graphs. We propose to call these new algebras the \emph{Kumjian-Pask algebras}.

For operator algebraists, there is a well-trodden path for studying new analogues of Cuntz-Krieger algebras, which was developed in \cite{aHR} and \cite{BPRS}, and which was followed in the first four chapters of \cite{R}, for example. First, one constructs an algebra which is universal among $C^*$-algebras generated by families of partial isometries satisfying suitable Cuntz-Krieger relations. Next, one proves uniqueness theorems which say when a representation of this algebra is injective: there should be a gauge-invariant uniqueness theorem, which works without extra hypotheses on the graph, and a Cuntz-Krieger uniqueness theorem, which has a stronger conclusion but requires an aperiodicity hypothesis. And then one hopes to use these theorems to analyze the ideal structure.

Tomforde tramped this path for the Leavitt path algebras over fields \cite{T}, and more recently has retramped it for Leavitt path algebras over commutative rings \cite{T2}. We will use the same path to study the Kumjian-Pask algebras of row-finite $k$-graphs without sources. There are satisfactory $C^*$-algebraic uniqueness theorems for larger families of $k$-graphs, but they can be very complicated to work with (look at the proof of Cuntz-Krieger uniqueness in \cite{RSY2}, for example). So for a first pass it seems sensible to stick to the row-finite case, which covers most of the interesting examples. We follow \cite{T2} in allowing coefficients in an arbitrary commutative ring $R$ with identity $1$.

We begin with a section on background material. We recall from \cite{KP} some elementary facts about higher-rank graphs and their infinite path spaces, and also discuss some basic properties of gradings on free algebras which we couldn't find in suitable form in the literature. Then in \S\ref{sec:KPfam}, we describe our Kumjian-Pask relations for a row-finite $k$-graph $\L$ without sources, and construct the Kumjian-Pask algebra $\KP_R(\L)$ as a quotient of the free $R$-algebra on the set of paths in $\Lambda$. Because the Kumjian-Pask relations are substantially more complicated for $k$-graphs, we have had to be quite careful with this construction, and in particular with the existence of the $\Z^k$-grading on $\KP_R(\L)$. 

In \S\ref{sectionuniqueness}, we prove a graded-uniqueness theorem and a Cuntz-Krieger uniqueness theorem for $\KP_R(\L)$. We have used similar arguments to those of \cite[\S5--6]{T2}, but, partly because we are only interested in the row-finite case, we have been able to streamline the arguments and find a common approach to the two theorems. In particular, we were able to bypass the complicated induction arguments used in \cite{T2}. As the main hypothesis in our  Cuntz-Krieger uniqueness theorem we use the ``finite-path formulation'' of aperiodicity due to Robertson and Sims \cite{RS}.

In \S\ref{sectionbasicsimplicity} and \ref{sectionsimple}, we investigate the ideal structure of $\KP_R(\L)$. The first step is to describe the graded ideals, which we do in Theorem~\ref{latticehersat}; as in \cite{T2}, to get the usual description of ideals in terms of saturated hereditary subsets of vertices (which goes back to Cuntz \cite{C}), we have to restrict attention to a class of ``basically ideals''. We then give an analogue of Conditions (II) of \cite{C} and (K) of \cite{KPRR} which ensures that every basic ideal is graded, and describe the $k$-graphs for which $\KP_R(\L)$ is ``basic simple''. Then in \S\ref{sectionsimple}, we find necessary and sufficient conditions for $\KP_R(\L)$ to be simple in the more conventional sense. This last result is new even for $1$-graphs.  We discuss examples and applications in \S\ref{sectionexample}.


\section{Background}

 We write $\N$ for the set of natural numbers, including $0$. Let $k$ be a positive integer. For $m,n\in \N^k$, $m\leq  n$ means $m_i\leq n_i$ for $1\leq i\leq k$ and $m\vee n$ denotes the pointwise maximum. We denote the usual basis in $\N^k$ by  $\{e_i\}$.

 In a category $\C$ with objects $\C^0$, we identify objects $v\in\C^0$ with their identity morphisms $\iota_v$, and write $\C$ for the set of morphisms; we write $s$ and $r$ for the domain and codomain maps from $\C$ to $\C^0$, and usually denote the composition of morphisms by juxtaposition.
 
 A directed graph $E=(E^0,E^1,r,s)$ consists of countable sets $E^0$ and $E^1$  and functions $r,s:E^1\to E^0$. As usual, we think of the elements of $E^0$ as vertices and the elements $e$ of $E^1$ as edges from $s(e)$ to $r(e)$. Because we are going to be talking about higher-rank graphs, where a juxtaposition $\mu\nu$ stands for the composition of morphisms $\mu$ and $\nu$ with $s(\mu)=r(\nu)$, we use the conventions of \cite{R} for paths in $E$. Thus a path of length $|\mu|:=n$ in $E$ is a string $\mu=\mu_1\cdots\mu_n$ of edges $\mu_i$ with $s(\mu_i)=r(\mu_{i+1})$ for all $i$, and the path has source $s(\mu):=s(\mu_n)$ and range $r(\mu):=r(\mu_1)$. 
 
\subsection{Higher-rank graphs}

For a positive integer $k$, we view the additive semigroup $\N^k$ as a category with one object. Following Kumjian and Pask \cite{KP}, a \emph{graph of rank $k$} or \emph{$k$-graph}
is a countable category $\L = (\Lambda^0,\L,r,s)$ together with a functor $d:\L \to {\N}^k$, called
\emph{the degree map}, satisfying the following \emph{factorization property}:
if $\lambda \in \L$ and $d(\lambda) = m+n$ for some $m, n \in {\mathbb N}^k$, then there are unique $\mu,
\nu \in \L$ such that $d(\mu) = m$, $d(\nu) = n$, and $\lambda = \mu\nu$. 

The motivating example is:

\begin{example} Consider a directed graph $E=(E^0,E^1,r,s)$. Then the \emph{path category} $P(E)$ has object set $E^0$, and the morphisms in $P(E)$ from $v\in E^0$ to $w\in E^0$ are finite paths $\mu$ with $s(\mu)=v$ and $r(\mu)=w$;  composition is defined by concatenation, and the identity morphisms obtained by viewing the vertices as paths of length $0$. With the degree functor $d:\mu\mapsto |\mu|$, the path category $(P(E),d)$ is a $1$-graph.
\end{example}

With this example in mind, we make some conventions.  If $\lambda\in\L$ satisfies $d(\lambda)=0$, the identities $\iota_{r(\lambda)}\lambda=\lambda=\lambda\iota_{s(\lambda)}$ and the factorization property imply that $\iota_{r(\lambda)}=\lambda=\iota_{s(\lambda)}$; thus $v\mapsto \iota_v$ is a bijection of $\Lambda^0$ onto $d^{-1}(0)$. Then for $n\in \N^k$, we write $\L^n:=d^{-1}(n)$, and call the elements $\lambda$ of $\Lambda^n$ \emph{paths of degree $n$ from $s(\lambda)$ to $r(\lambda)$}. For $v\in \Lambda^0$ we write $v\Lambda^n$ or $v\Lambda$ for the sets of paths with range $v$ and $\Lambda^nv$ or $\Lambda v$ for paths with source $v$.

To visualise a $k$-graph $\L$, we think of the object set $\Lambda^0$ as the vertices in a directed graph, choose $k$ colours $c_1, \ldots,c_k$, and then for each $\lambda\in \Lambda^{e_i}$, we draw an oriented edge of colour $c_i$ from $s(\lambda)$ to $r(\lambda)$. This coloured directed graph $E$ is called the \emph{skeleton} of $\L$. When $k=1$, the skeleton is an ordinary directed graph, and completely determines the $1$-graph: indeed, the factorization property allows us to write each morphism $\lambda$ of degree $n$ uniquely as the composition $\lambda_1\circ\lambda_2\circ\cdots \circ\lambda_n$ of $n$ morphisms of degree 1, and then the map which takes $\lambda$ to the path $\lambda_1\lambda_2\cdots\lambda_n$ is an isomorphism of $\L$ onto $P(E)$.
When $k>1$, the skeleton does not always determine the $k$-graph. To discuss this issue, we need some examples.

\begin{example} Let $\O_k^0:=\N^k$, $\O_k:=\{(p,q)\in \N^k\times\N^k : p\leq q\}$, define $r,s:\O_k\to \O_k^0$ by $r(p,q):=p$ and $s(p,q):=q$, define composition by $(p,q)(q,r)=(p,r)$, and define $d:\O_k\to\N^k$ by $d(p,q):=q-p$. Then $\O_k=(\O_k,r,s,d)$ is a $k$-graph.

Similarly, for $m\in \N^k$  we define $\O_{k,m}^0:=\{p\in \N^k : p\leq m\}$ and $\O_{k,m}=\{(p,q)\in \O_{k,m}^0\times
\O_{k,m}^0 : p\leq q\}$, and then with the same $r$, $s$ and $d$, $\O_{k,m}$ is a $k$-graph. The skeleton of $\O_{2,(3,2)}$, for example, is
\begin{equation}\label{weeskeleton}
\xymatrix{
 \bullet \ar@{-->}[d]^e & \bullet \ar@{-->}[d]^m \ar[l]^f &
  q \ar@{-->}[d]^h \ar[l]^g&
 \bullet \ar[l] \ar@{-->}[d] \\
 p \ar@{-->}[d] & \bullet \ar@{-->}[d] \ar[l]^l & \bullet \ar@{-->}[d] \ar[l]^i &
 \bullet \ar[l] \ar@{-->}[d]   \\
 \bullet & \bullet \ar[l] & \bullet \ar[l] & \bullet \ar[l] }
\end{equation}
where the solid arrows are blue, say, and the dashed ones are red. We think of the paths as rectangles: for example, the path $(p,q)$ with source $q$ and range $p$ would be the $2\times 1$ rectangle in the top left, and the different routes $efg$, $lmg$, $lih$ from $q$ to $p$ represent the different factorizations of $(p,q)$.
\end{example}

The factorization property in a $k$-graph $\L$ sets up bijections between the $c_ic_j$-coloured paths of length 2 and the $c_jc_i$-coloured paths, and we think of each pair as a commutative square in the skeleton. A theorem of Fowler and Sims \cite{FS} says that this collection $\C$ of commutative squares determines the $k$-graph; a path of degree $(3,2)$, for example, is obtained by pasting a copy of \eqref{weeskeleton} round the skeleton in such a way that the colours are preserved and each constituent square is commutative. When $k=2$, every collection $\C$ which includes each $c_ic_j$-coloured path exactly once determines a $2$-graph with the given skeleton \cite[\S6]{KP}; for $k\geq 3$, the collection $\C$ has to satisfy an extra associativity condition. (For a discussion with some pictures, see \cite{RSY}.)

A $k$-graph $\L$ is \emph{row-finite} if
$v\Lambda^n$ is finite for every $v\in\L^0$ and $n\in{\mathbb N}^k$; $\L$ \emph{has no sources} if
$v\Lambda^n$ is nonempty for every $v\in\L^0$ and $n\in{\mathbb N}^k$. In this paper we are only interested  in row-finite $k$-graphs without sources. 

\subsection{The infinite path space}

Suppose that $\L$ is a row-finite $k$-graph without sources. Following \cite[\S2]{KP}, an \emph{infinite path} in $\L$ is a degree-preserving functor $x:\O_k\to \L$. We denote the set of all infinite paths by $\L^\infty$. Since we identify the object $m\in \O_k$ with the identity morphism $(m,m)$ at $m$, we write $x(m)$ for the vertex $x(m,m)$. Then the range of an infinite path $x$ is the vertex $r(x):=x(0)$, and we write $v\Lambda^\infty:=r^{-1}(v)$.

\begin{remark}
To motivate this definition, notice that an ordinary path $\lambda\in \L^n$ gives a functor $f_\l:\O_{k,n}\to\L$. To see this, take $0\leq p\leq q\leq n$, use the factorization property to see that there are unique paths $\l'\in\L^p$, $\l''\in \L^{q-p}$ and $\l'''\in \L^{n-q}$ such that $\l=\l'\l''\l'''$, and then define $f_\l(p,q):=\lambda(p,q):=\l''$. The map $\l\mapsto f_\lambda$ is a bijection from $\L^n$ onto the set of degree-preserving functors from $\O_{k,n}$ to $\L$ \cite[Examples 2.2(ii)]{RSY}.  
\end{remark}

Since $\Lambda$ has no sources, every vertex receives paths of arbitrarily large degrees, and the following lemma from \cite{KP} tells us that every vertex receives infinite paths.

\begin{lemma} \label{lemma1infinitepaths} \cite[Remarks 2.2]{KP} Suppose that $n(i)\leq n(i+1)$ in $\N^k$, that $n(i)_j\to \infty$ in $\N$ for $1\leq j\leq k$, and that $\l_i\in \L^{n(i)}$ satisfy
$\l_{i+1}(0,n(i))=\l_i$. Then there is a unique $y\in \L^\infty$ such that $y(0,n(i))=\l_i$.
\end{lemma}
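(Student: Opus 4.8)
The plan is to assemble $y$ out of the functors $f_{\lambda_i}\colon\O_{k,n(i)}\to\L$ attached to the finite paths $\lambda_i$ in the preceding remark. The point is that these functors form a compatible family. First I would record the key ``sub-path of a \emph{sub-path}'' identity: for $\mu\in\L^n$ and $0\le p\le q\le m\le n$ one has $\mu(0,m)(p,q)=\mu(p,q)$. Indeed, since $f_\mu$ is a functor on $\O_{k,n}$ and $(0,m)=(0,p)(p,q)(q,m)$ in $\O_k$, we get $\mu(0,m)=\mu(0,p)\,\mu(p,q)\,\mu(q,m)$, and (as $f_\mu$ is degree-preserving) this exhibits $\mu(p,q)$ as the degree-$(q-p)$ middle factor of $\mu(0,m)\in\L^m$; by the uniqueness clause of the factorization property that middle factor is precisely $\mu(0,m)(p,q)$. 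Applying this with $\mu=\lambda_{i+1}$ and $m=n(i)$, the hypothesis $\lambda_{i+1}(0,n(i))=\lambda_i$ gives $\lambda_i(p,q)=\lambda_{i+1}(p,q)$ for all $(p,q)\in\O_{k,n(i)}$, i.e.\ $f_{\lambda_{i+1}}$ restricts to $f_{\lambda_i}$ on $\O_{k,n(i)}$; iterating, $\lambda_j(0,n(i))=\lambda_i$ and $f_{\lambda_j}|_{\O_{k,n(i)}}=f_{\lambda_i}$ whenever $i\le j$.

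Since $n(i)_j\to\infty$ for each $j$ and the $n(i)$ increase, every $q\in\N^k$ lies below $n(i)$ for all large $i$, so $\O_k$ is the increasing union of the full subcategories $\O_{k,n(i)}$. I would then define $y(p,q):=\lambda_i(p,q)$ for any $i$ with $q\le n(i)$; the previous paragraph makes this independent of the choice of $i$. To check $y$ is a degree-preserving functor, take $p\le q\le r$, choose a single $i$ with $r\le n(i)$, and read off $y(p,q)\,y(q,r)=y(p,r)$, $y(m,m)\in\L^0$, and $d(y(p,q))=q-p$ from the corresponding properties of $f_{\lambda_i}$. Hence $y\in\L^\infty$, and taking any $i$ gives $y(0,n(i))=\lambda_i(0,n(i))=\lambda_i$.

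For uniqueness, suppose $y'\in\L^\infty$ also satisfies $y'(0,n(i))=\lambda_i$ for every $i$. Given $(p,q)\in\O_k$, pick $i$ with $q\le n(i)$; then functoriality of $y'$ together with uniqueness of factorizations gives $y'(p,q)=y'(0,n(i))(p,q)=\lambda_i(p,q)=y(p,q)$, so $y'=y$.

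The only real content is the sub-path identity in the first paragraph, which is pure bookkeeping with the uniqueness part of the factorization property (and is already implicit in \cite[Examples 2.2(ii)]{RSY}); the gluing of a compatible family of functors over an increasing union of full subcategories is routine, so I do not expect a serious obstacle here — the mild care needed is just to phrase the compatibility cleanly and to keep the degrees straight.
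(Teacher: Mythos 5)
Your proof is correct. Note that the paper itself gives no argument for this lemma --- it is quoted from \cite[Remarks 2.2]{KP} --- so there is no in-paper proof to compare against; your construction (establishing the sub-path identity $\mu(0,m)(p,q)=\mu(p,q)$ via uniqueness of factorizations, then gluing the compatible functors $f_{\lambda_i}$ over the increasing union $\O_k=\bigcup_i\O_{k,n(i)}$, with uniqueness by the same identity applied to any candidate $y'$) is exactly the standard argument the citation points to, and all the steps check out.
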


The next lemma, also from \cite[\S2]{KP}, tells us that we can compose infinite paths with finite ones, and that there is a converse factorization process. The path $x(n,\infty)$ in part (b) was denoted $\sigma^n(x)$ in \cite{KP}.

\begin{lemma}\label{lemma2infinitepaths} \textnormal{(a)} Suppose that $\l\in \L$ and $x\in \L^\infty$ satisfy $s(\lambda)=r(x)$. Then there is a unique $y\in \L^\infty$ such that $y(0,n)=\lambda x(0,n-d(\lambda))$ for $n\geq d(\l)$; we then write $\lambda x:=y$.

\textnormal{(b)} For $x\in \L^\infty$ and $n\in \N^k$, there exist unique $x(0,n)\in \L^n$ and $x(n,\infty)\in \L^\infty$ such that $x=x(0,n)x(n,\infty)$.
\end{lemma}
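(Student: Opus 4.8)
The plan is to deduce both parts from Lemma~\ref{lemma1infinitepaths} together with one bookkeeping fact, which I will call the \emph{determination principle}: if $y_1,y_2\in\L^\infty$ satisfy $y_1(0,p)=y_2(0,p)$ for all $p$ in a subset of $\N^k$ that is cofinal (every element of $\N^k$ lies below some member), then $y_1=y_2$. This holds because, for a degree-preserving functor $y:\O_k\to\L$, the restriction of $y$ to $\O_{k,p}$ is again degree-preserving, hence by the bijection recalled in the remark above it equals $f_{y(0,p)}$; so $y(q,r)=y(0,p)(q,r)$ whenever $q\le r\le p$, and since every pair $q\le r$ satisfies $r\le p$ for some $p$ in the cofinal set, the finite paths $y(0,p)$ determine $y$. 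Every uniqueness assertion in the lemma will follow from this, applied to the cofinal sets $\{n\in\N^k:n\ge d(\l)\}$ in (a) and $\{m\in\N^k:m\ge n\}$ in (b).

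For part (a), fix an increasing sequence $m(i)\in\N^k$ with each coordinate $m(i)_j\to\infty$ (say $m(i)=(i,\dots,i)$), set $n(i):=d(\l)+m(i)$, and put $\mu_i:=\l\,x(0,m(i))$; this is defined and lies in $\L^{n(i)}$ because $s(\l)=r(x)=r(x(0,m(i)))$. Since $x$ is a functor and $(0,m(i+1))=(0,m(i))(m(i),m(i+1))$ in $\O_k$, we get $x(0,m(i+1))=x(0,m(i))\,x(m(i),m(i+1))$, hence $\mu_{i+1}=\mu_i\,x(m(i),m(i+1))$; comparing degrees and using uniqueness in the factorization property gives $\mu_{i+1}(0,n(i))=\mu_i$. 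Lemma~\ref{lemma1infinitepaths} now yields a unique $y\in\L^\infty$ with $y(0,n(i))=\mu_i$ for all $i$. For arbitrary $n\ge d(\l)$, choose $i$ with $n\le n(i)$; factoring $\mu_i=(\l\,x(0,n-d(\l)))\,x(n-d(\l),m(i))$ and reading off the degree-$n$ initial segment (which makes sense since $y|_{\O_{k,n(i)}}=f_{\mu_i}$) gives $y(0,n)=\mu_i(0,n)=\l\,x(0,n-d(\l))$, the claimed formula. Uniqueness of $y$ is then immediate from the determination principle.

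For part (b), let $x(0,n)\in\L^n$ be the value of the functor $x$ at $(0,n)$, and define $z:\O_k\to\L$ by $z(p,q):=x(n+p,n+q)$. A routine check, using that $x$ is a degree-preserving functor, shows that $z$ is a degree-preserving functor, so $z\in\L^\infty$, and $r(z)=x(n)=s(x(0,n))$, so the product $x(0,n)\,z$ is defined via part (a). For $m\ge n$ we compute, using the formula from (a) and functoriality of $x$, that $(x(0,n)z)(0,m)=x(0,n)\,z(0,m-n)=x(0,n)\,x(n,m)=x(0,m)$; since $\{m:m\ge n\}$ is cofinal, the determination principle gives $x(0,n)z=x$, and we set $x(n,\infty):=z$. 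For uniqueness, suppose $x=\mu z'$ with $\mu\in\L^n$ and $z'\in\L^\infty$; evaluating at $(0,n)$ forces $\mu=x(0,n)$, and then for each $m\ge n$ the equality $x(0,m)=x(0,n)\,x(n,m)=x(0,n)\,z'(0,m-n)$ exhibits two factorizations of an element of $\L^m$ into degrees $n$ and $m-n$, so $z'(0,m-n)=x(n,m)=z(0,m-n)$; as $m-n$ runs over all of $\N^k$, the determination principle gives $z'=z$.

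The content is entirely in carefully tracking the factorization property and keeping the indices straight; once the determination principle is isolated there is no real obstacle, and the only mild care needed is choosing the cofinal sequence in (a) so that Lemma~\ref{lemma1infinitepaths} applies.
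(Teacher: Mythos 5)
Your argument is correct, and every step checks out: the determination principle is valid (for $q\le r\le p$ the factorization property forces $y(q,r)=y(0,p)(q,r)$, so initial segments along a cofinal set determine the functor), the sequence $\mu_i=\l\,x(0,m(i))$ satisfies the compatibility hypothesis of Lemma~\ref{lemma1infinitepaths}, and the shift $z(p,q):=x(n+p,n+q)$ in part (b) is exactly the map denoted $\sigma^n(x)$ in \cite{KP}. Note that the paper offers no proof of this lemma — it is quoted from \cite[\S2]{KP} — so there is nothing internal to compare against; your write-up is essentially a careful reconstruction of the standard argument from \cite{KP}, with the only cosmetic remark being that the determination principle does not really need the bijection $\l\mapsto f_\l$ from the Remark: it follows directly from uniqueness in the factorization property applied to $y(0,p)=y(0,q)y(q,r)y(r,p)$.
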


\subsection{Graded rings}

Let $G$ be an additive abelian group. A ring $A$ is \emph{$G$-graded} if there are additive subgroups $\{A_g :g\in G\}$ of $A$ such that $A_gA_h\subset A_{g+h}$ and every nonzero $a\in A$ can be written in exactly one way as a finite sum $\sum_{g\in F}a_g$ of nonzero elements $a_g\in A_g$. The elements of $A_g$ are \emph{homogeneous of degree $g$}, and $a=\sum_{g\in F}a_g$ is the \emph{homogeneous decomposition} of~$a$. If $A$ and $B$ are $G$-graded rings, a homomorphism $\phi:A\to B$ is \emph{graded} if $\phi(A_g)\subset B_g$ for all $g\in G$. 

Suppose that $A$ is $G$-graded by $\{A_g:g\in G\}$. An ideal $I$ in $A$ is a \emph{graded ideal} if $\{I\cap A_g:g\in G\}$ is a grading of $I$. If $I$ is graded and $q:A\to A/I$ is the quotient map, then $A/I$ is $G$-graded by $\{q(A_g):g\in G\}$. To check that an ideal $I$ is graded, it suffices (by the uniqueness of homogeneous decompositions in $A$) to check that every element of $I$ is a sum of elements in $\bigcup_{g\in G}(I\cap A_g)$. Every ideal $I$ which is generated by a set $S$ of homogeneous elements is graded: to see this, it suffices by linearity and the previous observation to check that every element of\[\{a_gxb_h:a_g\in A_g,\ x\in S,\ b_h\in A_h\}\]belongs to some $I\cap A_k$, and this is easy.

For a nonempty set $Y$, we view the free $R$-module $\FF_R(Y)$ with basis $Y$ as the set of formal sums $\sum_{y\in Y}r_yy$ in which all but finitely many coefficients $r_y$ are zero; we view the elements $y\in Y$ as elements of $\FF_R(Y)$ by writing them as sums $\sum_x r_xx$ where $r_x=0$ for $x\not= y$ and $r_y=1$. For a nonempty set $X$, we let $w(X)$ be the set of words $w$ from the alphabet $X$, and we write $|w|$ for the length of $w$, so that $w=w_1w_2\cdots w_{|w|}$ for some $w_i\in X$. Then the free $R$-module ${\mathbb F}_R(w(X))$ is an $R$-algebra with multiplication given by
\begin{equation}\label{defprodfree}
\Big(\sum_{w\in w(X)}r_ww\Big)\Big(\sum_{y\in w(X)}s_yy\Big)=
\sum_{z\in w(X)}\Big(\sum_{\{w,y\in w(X)\,:\,wy=z,\; r_w\not=0,\;s_y\not=0\}}r_ws_y\Big)z.
\end{equation}
This algebra is the free $R$-algebra on $X$:

\begin{prop}\label{univfreealg}
The elements of $X$ generate $\FF_R(w(X))$ as an $R$-algebra. Suppose that $f$ is a function from $X$ into an $R$-algebra $A$. Then there is an $R$-algebra homomorphism $\phi_f:\FF_R(w(X))\to A$ such that
\begin{equation}\label{defphid}
\phi_f\Big(\sum_{w\in w(X)}r_ww\Big)=\sum_{r_w\not=0}r_wf(w_1)f(w_2)\cdots f(w_{|w|}).
\end{equation}
\end{prop}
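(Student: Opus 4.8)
The plan is to take for granted, as the statement does, that $\FF_R(w(X))$ is the free $R$-module on $w(X)$ and that \eqref{defprodfree} makes it an associative $R$-algebra, and then to prove the two assertions in turn.

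First I would record the computation that lies behind everything: if $w=w_1w_2\cdots w_{|w|}$ and we regard each letter $w_i$ as the corresponding length-one basis element, then the product $w_1w_2\cdots w_{|w|}$ formed in $\FF_R(w(X))$ via \eqref{defprodfree} is exactly the basis element $w$. This is an immediate induction on $|w|$: when computing $(w_1\cdots w_{n-1})\cdot w_n$, the only pair of words $(u,v)$ with $u=w_1\cdots w_{n-1}$, $v=w_n$ and $uv=z$ is the one with $z=w_1\cdots w_n$, so the product is the single basis element $w_1\cdots w_n$ with coefficient $1$. In particular every basis element of $\FF_R(w(X))$ is a product of elements of $X$; since the $R$-subalgebra generated by $X$ is also an $R$-submodule of $\FF_R(w(X))$ and every element of $\FF_R(w(X))$ is an $R$-linear combination of basis elements, that subalgebra is all of $\FF_R(w(X))$.

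To construct $\phi_f$, I would invoke the universal property of the free $R$-module: there is a unique $R$-module homomorphism $\phi_f\colon\FF_R(w(X))\to A$ with $\phi_f(w)=f(w_1)f(w_2)\cdots f(w_{|w|})$ for each $w\in w(X)$, and extending $R$-linearly gives precisely formula \eqref{defphid}. It then remains to check that $\phi_f$ respects multiplication. Since the product \eqref{defprodfree} is $R$-bilinear, the product in $A$ is $R$-bilinear, and $\phi_f$ is $R$-linear, both maps $(a,b)\mapsto\phi_f(ab)$ and $(a,b)\mapsto\phi_f(a)\phi_f(b)$ from $\FF_R(w(X))\times\FF_R(w(X))$ to $A$ are $R$-bilinear, so they coincide as soon as they agree on pairs of basis elements. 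For words $w,y$ the product $wy$ is the single basis element $w_1\cdots w_{|w|}y_1\cdots y_{|y|}$ by the computation above, so $\phi_f(wy)=f(w_1)\cdots f(w_{|w|})f(y_1)\cdots f(y_{|y|})=\phi_f(w)\phi_f(y)$, as required.

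The argument is entirely routine; the only place where a little care is needed is the reduction of multiplicativity to pairs of basis elements, which rests on the bilinearity of \eqref{defprodfree}. If the convention is that $w(X)$ also contains the empty word (so that $\FF_R(w(X))$ is unital with the empty word as identity), then one should take $A$ unital, read the empty product in \eqref{defphid} as $1_A$, and set $\phi_f$ of the empty word equal to $1_A$; the same proof then shows $\phi_f$ is a unital $R$-algebra homomorphism.
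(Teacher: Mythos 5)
Your proof is correct and follows essentially the same route as the paper: define $\phi_f$ on words as the product $f(w_1)\cdots f(w_{|w|})$, extend by the universal property of the free $R$-module, and verify multiplicativity from the product formula \eqref{defprodfree}. You simply spell out the details (the induction showing a word equals the product of its letters, and the bilinearity reduction to basis elements) that the paper compresses into ``a straightforward calculation''.
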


\begin{proof}
Since each word $w$ is a product of $\{w_i:1\leq i\leq |w|\}$ and each $w_i\in X$, it is clear that $X$ generates $\FF_R(w(X))$ as an algebra. We can extend $f$ to a function on $w(X)$ by setting $f(w)=f(w_1)f(w_2)\cdots f(w_{|w|})$. Then the universal property of the free module $\FF_R(w(X))$ gives a well-defined $R$-module homomorphism $\phi_f:\FF_R(w(X))\to A$ satisfying \eqref{defphid}. Now a straightforward calculation using \eqref{defprodfree} shows that $\phi_f$ is an $R$-algebra homomorphism.
\end{proof}

We will want to put gradings on our free $R$-algebras, and the next proposition tells us how to do this. 

\begin{prop}\label{gradefree}
Suppose that $X$ is a nonempty set and $d$ is a function from $X$ to an additive abelian group $G$. Then there is a $G$-grading on $\FF_R(w(X))$ such that
\[
\FF_R(w(X))_g=\Big\{\sum_{w\in w(X)}r_w w:r_w\not=0\Longrightarrow d(w):=\sum_{i=1}^{|w|}d(w_i)=g\Big\}\quad \text{for $g\in G$.}
\]
\end{prop}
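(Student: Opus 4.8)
The plan is to read the grading straight off the basis $w(X)$. First I would extend $d$ to a map $d:w(X)\to G$ by $d(w):=\sum_{i=1}^{|w|}d(w_i)$ (an empty sum if $w(X)$ is taken to contain the empty word, so that the identity, where present, sits in degree $0$). For $g\in G$ set $w(X)_g:=\{w\in w(X):d(w)=g\}$, so that $w(X)=\bigsqcup_{g\in G}w(X)_g$ is a partition of the basis. The point is then that the subgroup $\FF_R(w(X))_g$ described in the statement is exactly the free $R$-submodule $\FF_R(w(X)_g)$ spanned by $w(X)_g$. Since partitioning a basis of a free module splits the module as an internal direct sum, we get $\FF_R(w(X))=\bigoplus_{g\in G}\FF_R(w(X))_g$ as $R$-modules: concretely, a nonzero $a=\sum_{w}r_ww$ with finite support decomposes uniquely as $a=\sum_{g}a_g$ with $a_g:=\sum_{w:\,d(w)=g}r_ww\in\FF_R(w(X))_g$, and this is the unique way to write $a$ as a finite sum of nonzero homogeneous elements, because within each $\FF_R(w(X))_g$ the expansion over $w(X)_g$ is itself unique. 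In particular each $\FF_R(w(X))_g$ is an additive subgroup and the homogeneous-decomposition axiom holds.

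It remains to verify $\FF_R(w(X))_g\,\FF_R(w(X))_h\subseteq\FF_R(w(X))_{g+h}$. The key elementary observation is that concatenation adds degrees: for $w,y\in w(X)$ the word $wy=w_1\cdots w_{|w|}y_1\cdots y_{|y|}$ satisfies $d(wy)=d(w)+d(y)$, so $w\in w(X)_g$ and $y\in w(X)_h$ force $wy\in w(X)_{g+h}$. Now if $a=\sum_w r_ww\in\FF_R(w(X))_g$ and $b=\sum_y s_yy\in\FF_R(w(X))_h$, then by the multiplication formula \eqref{defprodfree} every word $z$ occurring with nonzero coefficient in $ab$ has the form $z=wy$ with $r_w\neq 0$ and $s_y\neq 0$; hence $d(w)=g$, $d(y)=h$, and $d(z)=g+h$, so $ab\in\FF_R(w(X))_{g+h}$. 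This is all that is needed for $\{\FF_R(w(X))_g:g\in G\}$ to be a $G$-grading.

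Every step here is a direct unwinding of definitions, so I do not expect a genuine obstacle; the only thing meriting any care is the bookkeeping that concatenation of words adds lengths and therefore, under the extended degree map, adds degrees (together with the harmless check that the empty word, if it is in $w(X)$, has degree $0$). Indeed one could phrase the whole proposition as an instance of the general principle that the free $R$-module on a multiplicative basis $B$ carrying a map $\delta:B\to G$ with $\delta(bb')=\delta(b)+\delta(b')$ is automatically $G$-graded with homogeneous components $\FF_R(\delta^{-1}(g))$, applied to $B=w(X)$ and $\delta=d$.
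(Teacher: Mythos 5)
Your proof is correct and follows essentially the same route as the paper: both read the grading off the basis by extending $d$ to words, obtain the homogeneous decomposition from the resulting partition of $w(X)$ (the paper verifies spanning and independence by direct coefficient manipulation, which is just your basis-partition argument written out), and check $\FF_R(w(X))_g\FF_R(w(X))_h\subseteq\FF_R(w(X))_{g+h}$ from the multiplication formula \eqref{defprodfree} via $d(wy)=d(w)+d(y)$.
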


\begin{proof}
It is straightforward that each $\FF_R(w(X))_g$ is an additive subgroup of $\FF_R(w(X))$. To see that they span, consider $a=\sum_{w\in w(X)}r_ww \in {\FF}_R(w(X))$, and let $H:=\{w:r_w\not=0\}$. For $g\in G$ and $w\in w(X)$, we define
\[
s_{g, w} =
\begin{cases}
r_w & \text{ if $d(w) = g$} \\
0 & \text{ otherwise;}\\
\end{cases}
\]
then $a_g:=\sum_{w\in w(X)}s_{g,w}w$ belongs to $\FF_R(w(X))_g$, and $\sum_{g\in d(H)}a_g$ is a finite sum which is easily seen to be $a$. To show that the $\FF_R(w(X))_g$ are independent, suppose that $F$ is a finite subset of $G$, $a_g\in \FF_R(w(X))_g$ and $\sum_{g\in F}a_g=0$. Write  $a_g=\sum_{w\in w(X)} t_{g, w}w$. Then $t_{g,w}=0$ unless $g=d(w)$, and 
\[
0 = \sum_{g\in F}\sum_{w\in w(X)} t_{g, w}w = \sum_{w\in w(X)}\Big(\sum_{g\in F}t_{g, w}\Big)w = \sum_{w\in w(X)}t_{d(w), w}w.
\]
Then,  since the $0$ element of ${\mathbb F}_R(X)$ is the sum in which all coefficients are $0$, we get $t_{d(w), w} = 0$ for $w\in w(X)$. Thus we have $t_{g, w}
= 0$ for all $g, w$, and $a_g=0$ for all $g\in F$.

To see that $\FF_R(w(X))_g\FF_R(w(X))_h\subset \FF_R(w(X))_{g+h}$, we take $\sum_{w\in w(X)}r_ww$ in $\FF_R(w(X))_g$ and $\sum_{y\in w(X)}s_yy$ in $\FF_R(w(X))_h$, and multiply them using \eqref{defprodfree}. Suppose that the coefficient of $z$ on the right-hand side of \eqref{defprodfree} is nonzero. Then at least one summand $r_ws_y$ is nonzero, and for this summand $r_w\not=0$ and $s_y\not=0$, which by definition of the $\FF_R(w(X))_g$ imply $d(w)=g$ and $d(y)=h$. But now $d(z)=d(wy)=d(w)+d(y)=g+h$, so the product is in $\FF_R(w(X))_{g+h}$.
\end{proof}

\section{Kumjian-Pask families}\label{sec:KPfam}

The algebras of interest to us are algebraic analogues of a family of $C^*$-algebras introduced by Kumjian and Pask in \cite{KP}. In the algebraic analogue, the generating relations look a little different, so we begin by examining algebraic consequences of the relations in \cite{KP}. For the benefit of algebraists, we recall that a \emph{projection} in a $C^*$-algebra $A$ is an element $P\in A$ such that $P^*=P=P^2$. A \emph{partial isometry} is an element $S\in A$ such that $S=SS^*S$; equivalently, one of $SS^*$ or $S^*S$ is a projection, and then both are (see the appendix in \cite{R}, for example).

Let $\L$ be a row-finite $k$-graph without sources. Kumjian and Pask studied collections  $S
=\{S_{\lambda} : \lambda\in\L\}$ of partial isometries in a $C^*$-algebra $A$ such that
\begin{enumerate}
\item $\{S_v:v\in\L^0\}$ is a collection of mutually orthogonal projections, 
\item $S_{\lambda}S_{\mu} = S_{\lambda\mu}$ for $\lambda, \mu\in\L$ with $r(\mu) = s(\lambda)$,
\item\label{ck3} $S_{\lambda}^*S_{\lambda} = S_{s(\lambda)}$ for $\lambda\in\L$, and
\item\label{ck4} 
 $S_v = \sum_{\lambda\in v\L^n} S_{\lambda}S_{\lambda}^*$ for $v\in\L^0$ and  $n\in {\mathbb N}^k$.
\end{enumerate}
Although they did not use this name, these quickly became known as \emph{Cuntz-Krieger $\L$-families}.

The relation \eqref{ck3} immediately implies that $S_\lambda=S_\l(S_{\lambda}^*S_{\lambda})=S_\l S_{s(\l)}$. Next, recall that a finite sum $P=\sum_i P_i$ of projections in a $C^*$-algebra is a projection if and only if $P_iP_j=0$ for $i\not= j$, and then $PP_i=P_i$ for all $i$ (see \cite[Corollary~A.3]{R}). Thus, since $S_v$ is a projection, relation \eqref{ck4} implies that if $\lambda,\mu\in v\Lambda^n$ and $\lambda\not= \mu$, then $(S_\l S_\l^*)(S_\mu S_\mu^*)=0$ and $S_{v}(S_\l S_\l^*)=(S_\l S_\l^*)$. In particular, we have $S_{r(\l)}S_{\l}=S_{r(\l)}(S_\l S_\l^*)S_{\l}=(S_\l S_\l^*)S_{\l}=S_{\l}$. Next, note that
\[
S_{\l}^*S_{\mu}=S_{\l}^*(S_\l S_\l^*)(S_\mu S_\mu^*)S_{\mu},
\]
and hence we have the following stronger version of relation \eqref{ck3}:
\begin{enumerate}
\item[(\ref{ck3}$'$)] if $\lambda,\mu \in v\Lambda^n$, then $S_{\lambda}^*S_{\mu} = \delta_{\l,\mu} S_{s(\lambda)}$.
\end{enumerate}
The arguments in the previous paragraph do not work in the purely algebraic setting, and, as was the case for directed graphs in \cite{AA1}, we have to add some extra relations. 

If $\L$ is a $k$-graph, we let $\L^{\not=0}:=\{\l\in\L:d(\l)\not=0\}$, and for each $\l\in\L^{\not=0}$ we introduce a \emph{ghost path} $\l^*$; for $v\in \L^0$, we define $v^*:=v$. We write $G(\Lambda)$ for the set of ghost paths, or $G(\L^{\not=0})$ if we wish to exclude vertices. We define $d$, $r$ and $s$ on $G(\L)$ by
\[
d(\lambda^*) = -d(\lambda), \quad  r(\lambda^*) = s(\lambda), \quad s(\lambda^*) = r(\lambda);
\]
we then define composition on $G(\L)$ by setting $\lambda^*\mu^*
= (\mu\lambda)^*$ for $\lambda, \mu\in\L^{\neq 0}$ with $r(\mu^*) = s(\lambda^*)$. The factorization property of $\Lambda$ induces a similar factorization property on $G(\L)$. 

\begin{definition}
Let $\L$ be a row-finite $k$-graph without sources and let $R$ be a commutative ring with $1$. A \emph{Kumjian-Pask $\L$-family} $(P,S)$
in an $R$-algebra $A$ consists of two functions $P:\L^0\to A$ and $S:\L^{\not=0}\cup G(\L^{\not=0})\to A$ such that:
\begin{enumerate}
\item[(KP1)] $\{P_v:v\in \L^0\}$ is a family of mutually orthogonal idempotents,
\item[(KP2)] for all $\lambda, \mu\in\L^{\neq 0}$ with $r(\mu) = s(\lambda)$, we have
\[
S_{\lambda}S_{\mu} = S_{\lambda\mu}, \; S_{\mu^*}S_{\lambda^*} = S_{(\lambda\mu)^*}, \;
 P_{r(\lambda)}S_{\lambda} = S_{\lambda} = S_{\lambda}P_{s(\lambda)}, \;
  P_{s(\lambda)}S_{\lambda^*} = S_{\lambda^*} = S_{\lambda^*}P_{r(\lambda)},
\]
\item[(KP3)] for all $\lambda, \mu \in\L^{\neq 0}$ with $d(\lambda) = d(\mu)$, we have
\[
S_{\lambda^*}S_{\mu} = \delta_{\lambda,\mu}P_{s(\lambda)},
\]
\item[(KP4)] for all $v\in\L^0$ and all $n\in {\mathbb N}^k\setminus \{0\}$, we have
\[
P_v = \sum_{\lambda\in v\L^n} S_{\lambda}S_{\lambda^*}.
\]
\end{enumerate}
\end{definition}

\begin{remarks}\label{1stconseq} 
\begin{enumerate}
\item We have been careful to distinguish the vertex idempotents because we wanted to emphasise that there is only one generator for each path of degree $0$, whereas there are two for each path of nonzero degree. However, it is convenient when writing formulas such as \eqref{spanning} below to allow $S_v:=P_v$ and $S_{v^*}:=P_v$, and we do this. 

\item With the conventions we have set up, the last two relations in (KP2) can be summarized as $P_{r(x)}S_x=S_x=S_xP_{s(x)}$ for all $x\in \Lambda\cup G(\Lambda)$. This observation will be useful in calculations.

\item  Relations (KP2) and (KP3) imply that 
\[
(S_{\lambda}S_{\lambda^*})(S_{\lambda}S_{\lambda^*})=S_{\lambda}(S_{\lambda^*}S_{\lambda})S_{\lambda^*}=S_{\lambda}P_{s(\lambda)}S_{\lambda^*}=S_{\lambda}S_{\lambda^*},
\]
and  (KP3) gives $(S_{\lambda}S_{\lambda^*})(S_{\mu}S_{\mu^*})=0$ when $d(\l)=d(\mu)$ and $\l\not=\mu$. Thus for each $n$, $\{S_\l S_\l^*:\l\in \L^n\}$ is a set of mutually orthogonal idempotents.
\end{enumerate}
\end{remarks}

The following analogue of \cite[Lemma~3.1]{KP} tells us how to simplify products $S_{\lambda^*}S_{\mu}$.  

\begin{lemma}\label{pathprods}
Suppose that $(P, S)$ is a Kumjian-Pask $\L$-family, and $\lambda, \mu\in\L$. Then for each $q\geq d(\lambda)\vee d(\mu)$, we have
\[
S_{\lambda^*}S_{\mu} = \sum_{d(\lambda\alpha)=q,\;\lambda\alpha = \mu\beta}S_{\alpha}S_{\beta^*}.
\]
\end{lemma}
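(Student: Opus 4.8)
The plan is to proceed by induction on $|q - (d(\lambda) \vee d(\mu))|$, that is, to first establish the base case $q = d(\lambda) \vee d(\mu)$ and then to show that the formula for a given $q$ implies the formula for $q + e_i$. Throughout I would use the conventions $S_v := P_v$ and $S_{v^*} := P_v$ from Remarks~\ref{1stconseq}(a), and the summarized form $P_{r(x)} S_x = S_x = S_x P_{s(x)}$ from Remarks~\ref{1stconseq}(b); these let me treat the vertex cases ($d(\lambda) = 0$ or $d(\mu) = 0$) uniformly with the general case, since for instance when $d(\lambda) = 0$ we have $\lambda \in \L^0$ and the claimed identity reads $P_\lambda S_\mu = \sum_{d(\alpha) = q,\ \lambda \alpha = \mu\beta} S_\alpha S_{\beta^*}$.

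For the base case, set $p := d(\lambda) \vee d(\mu)$, write $m := p - d(\lambda)$ and $n := p - d(\mu)$, and use (KP4) twice: $P_{s(\lambda)} = \sum_{\alpha \in s(\lambda)\L^m} S_\alpha S_{\alpha^*}$ and $P_{s(\mu)} = \sum_{\beta \in s(\mu)\L^n} S_\beta S_{\beta^*}$ (when $m = 0$ the first sum degenerates to the single term $P_{s(\lambda)}$, and similarly for $n$; this is consistent with allowing $S_v = P_v$). Then
\[
S_{\lambda^*} S_\mu = S_{\lambda^*} P_{s(\lambda)} P_{r(\mu)} S_\mu = \sum_{\alpha \in s(\lambda)\L^m} \sum_{\beta \in s(\mu)\L^n} S_{(\lambda\alpha)^*} S_{\mu\beta},
\]
using (KP2) to fold the $S_{\lambda^*} S_\alpha$ into $S_{(\lambda\alpha)^*}$ and $S_\beta S_\mu$ — wait, more carefully $S_{\lambda^*} S_\alpha S_{\alpha^*} S_\mu$, and since $s(\lambda) = r(\mu)$ only the cross terms with matching vertices survive; I would insert $P_{s(\lambda)}$ and expand. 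Now $\lambda\alpha$ and $\mu\beta$ both have degree $p$, so (KP3) gives $S_{(\lambda\alpha)^*} S_{\mu\beta} = \delta_{\lambda\alpha,\,\mu\beta} P_{s(\alpha)}$, and $P_{s(\alpha)} = S_\alpha S_{\alpha^*} \cdot$ something — actually I should rewrite: reindex so that the surviving pairs are exactly those with $\lambda\alpha = \mu\beta$, and for such a pair $S_{(\lambda\alpha)^*} S_{\mu\beta} = P_{s(\lambda\alpha)}$. To match the target I instead compute $S_{\lambda^*} S_\mu$ by noting $S_{\lambda^*} S_\mu = \sum_\alpha S_{\lambda^*} S_\alpha S_{\alpha^*} S_\mu = \sum_{\alpha,\beta} S_{(\lambda\alpha)^*} S_{\mu\beta} S_{\beta^*}$ after also inserting $\sum_\beta S_\beta S_{\beta^*} = P_{s(\mu)}$ on the right is not quite right dimensionally; the cleanest route is: insert $P_{s(\lambda)} = \sum_{\alpha} S_\alpha S_{\alpha^*}$ on the left of $S_\mu$, giving $S_{\lambda^*} S_\mu = \sum_\alpha S_{\lambda^*} S_\alpha S_{\alpha^*} S_\mu$, then for each $\alpha$ apply the one-step ``push'' that $S_{\alpha^*} S_\mu$ simplifies because $d(\alpha) = m \le p$ and $d(\mu) = p - n \le p$ — this is circular. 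So the honest base case is only $q = p$ done via the double (KP4) insertion followed by (KP3), yielding exactly $\sum_{d(\lambda\alpha) = q,\ \lambda\alpha = \mu\beta} S_\alpha S_{\beta^*}$ after using $P_{s(\lambda\alpha)} = S_{(\lambda\alpha)^*} S_{\lambda\alpha}$ and the fact that for $\lambda\alpha = \mu\beta$ we can write the term as $S_\alpha P_{s(\alpha)} S_{\beta^*} = S_\alpha S_{\beta^*}$.

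For the inductive step, assume the formula holds for $q$ and prove it for $q' := q + e_i$. Starting from $S_{\lambda^*} S_\mu = \sum_{d(\lambda\alpha) = q,\ \lambda\alpha = \mu\beta} S_\alpha S_{\beta^*}$, I would expand each $S_\alpha S_{\beta^*}$ using $P_{s(\alpha)} = \sum_{\gamma \in s(\alpha)\L^{e_i}} S_\gamma S_{\gamma^*}$: since $s(\alpha) = s(\lambda\alpha) = s(\mu\beta) = s(\beta)$, we get $S_\alpha S_{\beta^*} = S_\alpha P_{s(\alpha)} S_{\beta^*} = \sum_{\gamma \in s(\alpha)\L^{e_i}} S_{\alpha\gamma} S_{(\beta\gamma)^*}$. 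The factorization property ensures that the pairs $(\alpha\gamma, \beta\gamma)$ arising this way, as $(\alpha,\beta)$ ranges over pairs with $d(\lambda\alpha) = q$, $\lambda\alpha = \mu\beta$ and $\gamma$ ranges over $s(\alpha)\L^{e_i}$, are in bijection with the pairs $(\alpha', \beta')$ satisfying $d(\lambda\alpha') = q'$ and $\lambda\alpha' = \mu\beta'$: given such $\alpha', \beta'$, the factorization $\lambda\alpha' = (\lambda\alpha)\gamma$ with $d(\lambda\alpha) = q$ recovers $\alpha$ (and $\beta$) uniquely, and uniqueness of factorizations gives injectivity. Relabeling the sum then yields the formula for $q'$. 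The main obstacle I anticipate is purely bookkeeping: getting the base-case manipulation right (in particular handling the degenerate $m = 0$ or $n = 0$ subcases cleanly via the $S_v = P_v$ convention, and verifying that after the two (KP4) insertions and (KP3) the residual $P_{s(\lambda\alpha)}$ really does absorb into $S_\alpha S_{\beta^*}$), and then verifying the bijection of index sets in the inductive step carefully enough that no terms are double-counted or dropped — this is where the $k$-graph factorization property does the real work and where an analogous $1$-graph argument would be simpler.
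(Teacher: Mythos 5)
Your proposal is correct in substance, and its ``honest base case'' is exactly the paper's proof --- but the induction you build on top of it is unnecessary. The double (KP4) insertion works verbatim for an \emph{arbitrary} $q\geq d(\lambda)\vee d(\mu)$, not just the minimal one: the paper writes $S_{\lambda^*}S_\mu = P_{s(\lambda)}S_{\lambda^*}S_\mu P_{s(\mu)}$, expands $P_{s(\lambda)}$ over $s(\lambda)\Lambda^{q-d(\lambda)}$ and $P_{s(\mu)}$ over $s(\mu)\Lambda^{q-d(\mu)}$, uses (KP2) to turn each term into $S_\alpha S_{(\lambda\alpha)^*}S_{\mu\beta}S_{\beta^*}$, and collapses the middle with (KP3), since $d(\lambda\alpha)=q=d(\mu\beta)$. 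Nothing in that computation uses minimality of $q$, so your inductive step --- which is itself fine; the reindexing $(\alpha,\beta,\gamma)\mapsto(\alpha\gamma,\beta\gamma)$ is justified by the factorization property exactly as you say --- buys nothing except extra bookkeeping.

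There are also some local slips you should repair before writing this up. In your base-case display the idempotents sit on the wrong sides: (KP2) gives $S_{\lambda^*}=P_{s(\lambda)}S_{\lambda^*}$ and $S_\mu=S_\mu P_{s(\mu)}$, so $P_{s(\lambda)}$ must be resolved to the \emph{left} of $S_{\lambda^*}$ and $P_{s(\mu)}$ to the \emph{right} of $S_\mu$; the expression $S_{\lambda^*}P_{s(\lambda)}P_{r(\mu)}S_\mu$ is not equal to $S_{\lambda^*}S_\mu$ in general, since $S_{\lambda^*}P_{s(\lambda)}=S_{\lambda^*}P_{r(\lambda)}P_{s(\lambda)}$ vanishes unless $r(\lambda)=s(\lambda)$. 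Likewise, (KP2) folds $S_{\alpha^*}S_{\lambda^*}$ into $S_{(\lambda\alpha)^*}$ --- not $S_{\lambda^*}S_\alpha$, which is a product of the very type the lemma is computing --- and the vertex condition relevant to $S_{\lambda^*}S_\mu\neq 0$ is $r(\lambda)=r(\mu)$, not $s(\lambda)=r(\mu)$. Your closing summary (double (KP4) insertion, then (KP3), then $S_\alpha P_{s(\alpha)}S_{\beta^*}=S_\alpha S_{\beta^*}$) does describe the correct argument; once the insertions are placed correctly and the outer factors $S_\alpha,S_{\beta^*}$ are carried through, that single computation proves the lemma for every admissible $q$ with no induction.
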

\begin{proof}
By (KP2), we have $S_{\lambda^*}S_{\mu}= P_{s(\lambda)}S_{\lambda^*}S_{\mu}P_{s(\mu)}$, and then applying (KP4) at $v = s(\lambda)$  and at $v =
s(\mu)$ gives
\begin{equation}\label{expandusingKP4}
S_{\lambda^*}S_{\mu}
=\sum_{\alpha\in s(\lambda)\L^{q-d(\lambda)},\;\beta\in s(\mu)\L^{q-d(\mu)}}S_{\alpha}S_{\alpha^*}S_{\lambda^*}S_{\mu}S_{\beta}S_{\beta^*}.
\end{equation}
Since $d(\lambda\alpha) = q = d(\mu\beta)$, (KP2) and (KP3) give
\[
S_{\alpha}S_{\alpha^*}S_{\lambda^*}S_{\mu}S_{\beta}S_{\beta^*}  =
  S_{\alpha}S_{(\lambda\alpha)^*}S_{\mu\beta}S_{\beta^*}
  =  \begin{cases}
S_{\alpha}S_{\beta^*} & \text{if $\lambda\alpha = \mu\beta$}\\
0 & \text{otherwise},
\end{cases}
\]
and so the right-hand side of \eqref{expandusingKP4} collapses as required.
\end{proof}

\begin{theorem}\label{$KP_R$}
Let $\L$ be a row-finite $k$-graph without sources, and let $R$ be a commutative ring with $1$. Then there is an $R$-algebra $\KP_R(\L)$ generated by a Kumjian-Pask $\L$-family $(p, s)$ such that, whenever $(Q,T)$ is a Kumjian-Pask $\L$-family in an $R$-algebra $A$, there is a
unique $R$-algebra homomorphism $\pi_{Q,T}:\KP_R(\L)\to A$ such that
\begin{equation}\label{defpiqt}
\pi_{Q,T}(p_v) = Q_v, \quad \pi_{Q,T}(s_{\lambda}) = T_{\lambda}, \quad \pi_{Q,T}(s_{\mu^*}) =T_{\mu^*}
\end{equation}
for $v\in \L^0$ and $\lambda,\mu\in\L^{\neq 0}$. There is a $\Z^k$-grading on $\KP_R(\L)$ satisfying 
\begin{equation}\label{spanning}
\KP_R(\Lambda)_n=\lsp_R\big\{s_{\lambda}s_{\mu^*}:\l,\mu \in \L\text{ and }d(\l)-d(\mu)=n\big\},
\end{equation}
and we have $rp_v\neq 0$ for $v\in \L^0$ and $r\in R\setminus\{0\}$.
\end{theorem}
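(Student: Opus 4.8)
\emph{The plan.} I would realise $\KP_R(\L)$ as a quotient of a free $R$-algebra, pull the $\Z^k$-grading back from there, and then produce a concrete Kumjian-Pask family detecting $Rp_v$. For the construction, put $X:=\L\cup G(\L^{\neq 0})$ and form the free $R$-algebra $\FF_R(w(X))$ of Proposition~\ref{univfreealg}. Let $I\subseteq\FF_R(w(X))$ be the two-sided ideal generated by the elements obtained by moving each Kumjian-Pask relation to one side: $p_up_v-\delta_{u,v}p_v$ for $u,v\in\L^0$; $s_\lambda s_\mu-s_{\lambda\mu}$, $s_{\mu^*}s_{\lambda^*}-s_{(\lambda\mu)^*}$, $p_{r(\lambda)}s_\lambda-s_\lambda$, $s_\lambda p_{s(\lambda)}-s_\lambda$ together with the two ghost-path versions, for $\lambda,\mu\in\L^{\neq 0}$ with $r(\mu)=s(\lambda)$; $s_{\lambda^*}s_\mu-\delta_{\lambda,\mu}p_{s(\lambda)}$ for $d(\lambda)=d(\mu)$; and $p_v-\sum_{\lambda\in v\L^n}s_\lambda s_{\lambda^*}$ for $n\in\N^k\setminus\{0\}$ (a finite sum, as $\L$ is row-finite). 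Set $\KP_R(\L):=\FF_R(w(X))/I$ and let $(p,s)$ be the images of the generators; then $(p,s)$ is a Kumjian-Pask $\L$-family by construction and generates $\KP_R(\L)$ since $X$ generates $\FF_R(w(X))$. Given a Kumjian-Pask family $(Q,T)$ in an $R$-algebra $A$, define $f\colon X\to A$ by $f=Q$ on $\L^0$ and $f=T$ elsewhere; by Proposition~\ref{univfreealg} it extends to a homomorphism $\phi_f\colon\FF_R(w(X))\to A$, which kills every generator of $I$ (because $(Q,T)$ satisfies the relations), hence kills $I$ and descends to the required $\pi_{Q,T}\colon\KP_R(\L)\to A$ satisfying \eqref{defpiqt}; uniqueness is immediate because $(p,s)$ generates.

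\emph{The grading and $\supseteq$ in \eqref{spanning}.} Extend $d$ to $X$ by $d(\lambda^*):=-d(\lambda)$ for $\lambda\in\L^{\neq 0}$ and apply Proposition~\ref{gradefree} to obtain a $\Z^k$-grading on $\FF_R(w(X))$. Each generator of $I$ is homogeneous, since in every Kumjian-Pask relation the two sides are words of equal $\Z^k$-degree---for example $d(s_\lambda s_\mu)=d(\lambda)+d(\mu)=d(\lambda\mu)$, $d(s_{\lambda^*}s_\mu)=d(\mu)-d(\lambda)=0=d(p_{s(\lambda)})$ when $d(\lambda)=d(\mu)$, and $d(s_\lambda s_{\lambda^*})=0=d(p_v)$. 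By the remarks on graded rings in the Background section, an ideal generated by homogeneous elements is graded, so $I$ is graded and $\KP_R(\L)$ inherits a $\Z^k$-grading with $\KP_R(\L)_n=q(\FF_R(w(X))_n)$, $q$ the quotient map. Since $s_\lambda s_{\mu^*}$ is homogeneous of degree $d(\lambda)-d(\mu)$, this yields $\supseteq$ in \eqref{spanning}.

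\emph{$\subseteq$ in \eqref{spanning}.} Since $\KP_R(\L)$ is $R$-spanned by images of words in $X$, it is enough to reduce each such word to an $R$-combination of terms $s_\lambda s_{\mu^*}$, using the conventions $s_v=s_{v^*}=p_v$ of Remarks~\ref{1stconseq}(a). Using (KP1)---which annihilates $p_up_v$ for $u\neq v$---and (KP2)---which merges consecutive genuine paths, merges consecutive ghost paths, and absorbs vertices into an adjacent path---each word becomes $0$ or an alternating product $s_{\lambda_1}s_{\mu_1^*}s_{\lambda_2}s_{\mu_2^*}\cdots$. At an interior junction $s_{\mu_i^*}s_{\lambda_{i+1}}$, Lemma~\ref{pathprods} rewrites the product as a finite sum $\sum s_\alpha s_{\beta^*}$, and reinserting this and again merging via (KP2) strictly reduces the number of junctions; a routine induction then gives $\KP_R(\L)=\lsp_R\{s_\lambda s_{\mu^*}:\lambda,\mu\in\L\}$. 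Finally, for $a\in\KP_R(\L)_n$ write $a=\sum_ir_is_{\lambda_i}s_{\mu_i^*}$ and collect the terms by the value of $d(\lambda_i)-d(\mu_i)$; since $a$ is homogeneous of degree $n$, uniqueness of homogeneous decompositions forces the terms with $d(\lambda_i)-d(\mu_i)\neq n$ to sum to $0$, so $a\in\lsp_R\{s_\lambda s_{\mu^*}:d(\lambda)-d(\mu)=n\}$.

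\emph{Nonvanishing of $rp_v$ --- the main obstacle.} This is the only part requiring a concrete model, and it is where the substance lies. Let $M:=\FF_R(\L^\infty)$ be the free $R$-module on the infinite-path space, and define operators in $\End_R(M)$ on the basis $\L^\infty$ by $P_vx:=x$ if $r(x)=v$ and $0$ otherwise; $S_\lambda x:=\lambda x$ if $s(\lambda)=r(x)$ and $0$ otherwise (Lemma~\ref{lemma2infinitepaths}(a)); and $S_{\lambda^*}x:=x(d(\lambda),\infty)$ if $x(0,d(\lambda))=\lambda$ and $0$ otherwise (Lemma~\ref{lemma2infinitepaths}(b)). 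A direct check using the functor and factorization properties of infinite paths shows that $(P,S)$ is a Kumjian-Pask $\L$-family; for (KP4) the key point is that every $x$ with $r(x)=v$ has a unique initial segment $x(0,n)\in v\L^n$, so $\sum_{\lambda\in v\L^n}S_\lambda S_{\lambda^*}=P_v$ (a finite sum, by row-finiteness). Since $\L$ has no sources, Lemma~\ref{lemma1infinitepaths} furnishes some $x\in\L^\infty$ with $r(x)=v$, and then $(rP_v)(x)=rx\neq 0$ in $M$ for every $r\in R\setminus\{0\}$, so $rP_v\neq 0$ in $\End_R(M)$. Applying $\pi_{P,S}$ to $rp_v$ gives $rP_v\neq 0$, whence $rp_v\neq 0$. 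I expect the bulk of the effort to be the routine but fiddly verification that $(P,S)$ satisfies (KP1)--(KP4); everything else follows formally from the free-algebra and grading machinery of the Background section.
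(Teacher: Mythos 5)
Your proposal is correct and follows essentially the same route as the paper: the same free-algebra quotient and universal-property argument, the same grading via Proposition~\ref{gradefree} and homogeneity of the generators of $I$, a word-reduction argument using (KP1)--(KP2) and Lemma~\ref{pathprods} that matches the paper's inductive lemma for \eqref{spanning}, and the same infinite-path representation on $\FF_R(\L^\infty)$ to see $rp_v\neq 0$. No gaps worth flagging.
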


Standard arguments show that $(\KP_R(\Lambda),(p,s))$ is unique up to isomorphism, and we call $\KP_R(\L)$ \emph{the Kumjian-Pask algebra} of $\L$ and $(p,s)$ \emph{the universal Kumjian-Pask $\L$-family}.

\begin{notationnonumber}
We find it helpful to use the convention that lower-case letters signify that a Kumjian-Pask family $(p,s)$ has a universal property. 
\end{notationnonumber}

The proof of this theorem will occupy the rest of the section.

We begin by considering the free algebra $\FF_R(w(X))$ on $X:=\Lambda^0\cup\Lambda^{\not=0}\cup G(\Lambda^{\not= 0})$. Let $I$ be the ideal\label{idealI} of ${\FF_R}(w(X))$ generated by the union of the following sets:
\begin{itemize}
\item $\big\{vw - \delta_{v,w}v:v, w\in \L^0\big\}$;

\smallskip
\item $\big\{\lambda - \mu\nu,\ \lambda^* - \nu^*\mu^*:\lambda,\mu, \nu\in
\L^{\neq 0} \text{ and }\lambda = \mu\nu\big\}$\\
\mbox{\qquad\qquad}$\cup\big\{r(\lambda)\lambda - \lambda,\ \lambda - \lambda
s(\lambda),\ s(\lambda)\lambda^* - \lambda^*,\ \lambda^* - \lambda^*r(\lambda) :\lambda\in\L^{\neq 0}\big\}$;

\smallskip
\item  $\big\{\lambda^*\mu - \delta_{\lambda,\mu}s(\lambda):\lambda, \mu \in\L^{\neq 0} \text{ such that }d(\lambda) = d(\mu)\big\}$;

\smallskip
\item $\big\{v - \sum_{\lambda\in v\L^n}  \lambda\lambda^*:v\in\L^0,\ n\in {\mathbb N}^k\setminus\{0\}\big\}$.
\end{itemize}
We now define $\KP_R(\L):={\FF}_R(w(X))/I$. Let $q:{\FF}_R(w(X))\to {\FF}_R(w(X))/I$ be the quotient map. Then $\{p_v,s_\l,s_{\mu^*}\}:=\{q(v), q(\l), q(\mu^*)\}$ gives a generating Kumjian-Pask $\L$-family $(p,s)$ in $\KP_R(\L)$.

Now let $(Q,T)$ be a Kumjian-Pask $\L$-family in an $R$-algebra $A$. Define $f_{Q,T}:X\to A$ by $f(v) = Q_v$, $f(\lambda) = T_{\lambda}$ and $f(\mu^*) = T_{\mu^*}$, and the universal property of $\FF_R(w(X))$ described in Proposition~\ref{univfreealg} gives an
$R$-algebra homomorphism $\phi_f:\FF_R(w(X))\to A$ such that $\phi_f(v) = Q_v$,
$\phi_f(\lambda) = T_{\lambda}$ and $\phi_f(\mu^*) = T_{\mu^*}$. The Kumjian-Pask relations imply that $\phi_f$ vanishes on the ideal $I$, and therefore factors through an $R$-algebra
homomorphism $\pi_{Q,T}:\KP_R(\L)\to A$ satisfying \eqref{defpiqt}. Since the elements in $X$ generate $\FF_R(w(X))$ as an algebra, there is exactly one such homomorphism.

Applying Proposition~\ref{gradefree} to the degree map $d:X\to \N^k$ gives a $\Z^k$-grading of the free algebra $\FF_R(w(X))$, and every generator of $I$ lies in one of the subgroups $\FF_R(w(X))_n$ of homogeneous elements. Thus the ideal $I$ is graded, and the quotient $\KP_R(\L)=\FF_R(w(X))/I$ is graded by the subgroups $q(\FF_R(w(X))_n)$. The following lemma  identifies $q(\FF_R(w(X))_n)$ with the subgroup $\KP_R(\L)_n$ described in \eqref{spanning}.

\begin{lemma}
For every $w\in w(X)$, we have $q(w)\in \KP_R(\L)_{d(w)}$.
\end{lemma}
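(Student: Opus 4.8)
The goal is to show that for an arbitrary word $w\in w(X)$ in the generators, its image $q(w)$ in $\KP_R(\L)=\FF_R(w(X))/I$ lands in the homogeneous component $\KP_R(\L)_{d(w)}=\lsp_R\{s_\lambda s_{\mu^*}:d(\lambda)-d(\mu)=d(w)\}$. Since $\KP_R(\L)$ is $\Z^k$-graded by the subgroups $q(\FF_R(w(X))_n)$, and $w\in\FF_R(w(X))_{d(w)}$ by construction of the grading (Proposition~\ref{gradefree}), we automatically have $q(w)\in q(\FF_R(w(X))_{d(w)})$; the real content is that $q(\FF_R(w(X))_{d(w)})$ coincides with the span in \eqref{spanning}, i.e. that every $q(w)$ can be rewritten as an $R$-linear combination of elements of the form $s_\lambda s_{\mu^*}$ with $d(\lambda)-d(\mu)=d(w)$ (allowing the conventions $s_v=p_v$, $s_{v^*}=p_v$, and interpreting a ``product of length one or zero'' appropriately).

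The plan is to induct on the length $|w|$ of the word $w$. The base cases $|w|\le 1$ are immediate: a single generator $x\in X$ is either a vertex $v$ (then $q(v)=p_v=p_vp_v$, a product $s_\lambda s_{\mu^*}$ with $\lambda=\mu=v$), a path $\lambda\in\L^{\neq0}$ (then $q(\lambda)=s_\lambda=s_\lambda p_{s(\lambda)}=s_\lambda s_{s(\lambda)^*}$), or a ghost path $\lambda^*$ (then $q(\lambda^*)=s_{\lambda^*}=p_{s(\lambda)}s_{\lambda^*}=s_{s(\lambda)}s_{\lambda^*}$), and in each case the degree bookkeeping works out. For the inductive step, write $w=w'x$ where $x$ is the last letter and $|w'|=|w|-1$; then $q(w)=q(w')q(x)$, and by the inductive hypothesis $q(w')=\sum_i r_i\, s_{\lambda_i}s_{\mu_i^*}$ with $d(\lambda_i)-d(\mu_i)=d(w')$. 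So it suffices to show that each product $(s_\lambda s_{\mu^*})s_x$ lies in the appropriate span; by linearity and since $s_x$ is itself one of $p_v$, $s_\nu$, or $s_{\nu^*}$, this reduces to analyzing three types of products:
\[
s_\lambda s_{\mu^*}p_v,\qquad s_\lambda s_{\mu^*}s_\nu,\qquad s_\lambda s_{\mu^*}s_{\nu^*}.
\]

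Each of these is handled by the Kumjian-Pask relations. For $s_\lambda s_{\mu^*}p_v$: by (KP2), $s_{\mu^*}p_v=\delta_{v,s(\mu)}s_{\mu^*}$ (using $s_{\mu^*}=s_{\mu^*}p_{r(\mu^*)}$ and orthogonality of the $p_v$), so the product is either $0$ or $s_\lambda s_{\mu^*}$ again, and the degree is unchanged. For $s_\lambda s_{\mu^*}s_{\nu^*}$: by (KP2), $s_{\mu^*}s_{\nu^*}=s_{(\nu\mu)^*}$ when $r(\mu^*)=s(\nu^*)$, i.e. $s(\mu)=r(\nu)$ (and $0$ otherwise, via the vertex projections), giving $s_\lambda s_{(\nu\mu)^*}$ with $d(\lambda)-d(\nu\mu)=d(\lambda)-d(\mu)-d(\nu)=d(w')+d(\nu^*)=d(w)$, as required. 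The genuinely substantive case is $s_\lambda s_{\mu^*}s_\nu$, where $s_{\mu^*}s_\nu$ is a ``badly ordered'' product; here I would invoke Lemma~\ref{pathprods}, which gives
\[
s_{\mu^*}s_\nu=\sum_{d(\mu\alpha)=q,\ \mu\alpha=\nu\beta}s_\alpha s_{\beta^*}
\]
for any $q\ge d(\mu)\vee d(\nu)$. Substituting and using (KP2) to collapse $s_\lambda s_\alpha=s_{\lambda\alpha}$ yields $s_\lambda s_{\mu^*}s_\nu=\sum s_{\lambda\alpha}s_{\beta^*}$, a sum of terms of the desired form; and a short degree computation confirms $d(\lambda\alpha)-d(\beta)=d(\lambda)+d(\alpha)-d(\beta)=d(\lambda)-d(\mu)+(d(\mu\alpha)-d(\nu\beta))+d(\nu)=d(w')+d(\nu)=d(w)$, since $d(\mu\alpha)=d(\nu\beta)=q$. (One must also allow the possibility that the sum is empty, giving $0$, and handle the degenerate subcases where $\lambda$, $\mu$, or $\nu$ is a vertex using the conventions $s_v=p_v$; these are routine.) This completes the induction.

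The main obstacle is really just organizing the case analysis cleanly — in particular, making sure the conventions $s_v=p_v=s_{v^*}$ are applied consistently so that the three product types above genuinely cover all cases (including when $\lambda$ or $\mu$ has degree $0$), and getting the degree arithmetic right in the Lemma~\ref{pathprods} case, where the equality $d(\mu\alpha)=d(\nu\beta)=q$ is what makes the ``$q$''-dependence cancel out of the final degree. There is no deep difficulty: once Lemma~\ref{pathprods} is available, the badly-ordered product $s_{\mu^*}s_\nu$ is tamed, and everything else is a direct application of (KP2) together with the orthogonality (KP1) of the vertex idempotents.
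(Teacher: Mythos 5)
Your argument is correct, and it rests on the same two ingredients as the paper's proof --- (KP2) to merge adjacent factors of like type and Lemma~\ref{pathprods} to tame the badly ordered products $s_{\mu^*}s_\nu$ --- but the induction is organized differently. You peel off the last letter, re-expand the prefix via the inductive hypothesis as an $R$-combination of monomials $s_\lambda s_{\mu^*}$ of degree $d(w')$, and then check the three products $s_\lambda s_{\mu^*}p_v$, $s_\lambda s_{\mu^*}s_\nu$ and $s_\lambda s_{\mu^*}s_{\nu^*}$; the paper instead keeps the word intact, merging any adjacent subword $\l\mu$ or $\l^*\mu^*$ into a single letter (which shortens the word), and, when the word alternates real and ghost paths, applying Lemma~\ref{pathprods} to a factor of the form $\l^*\mu$ to produce words of the same length which do contain such a mergeable pair. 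Your version trades the paper's case analysis on the shape of the word for the (equally routine) degenerate-vertex cases, and has the mild advantage of producing the spanning expansion \eqref{spanning} directly at every stage; your degree computation in the Lemma~\ref{pathprods} case, using $d(\mu\alpha)=d(\nu\beta)=q$, is exactly the computation the paper performs in its length-two case. Two small corrections to your source/range bookkeeping for ghost paths: since $s(\mu^*)=r(\mu)$ and $r(\mu^*)=s(\mu)$, (KP2) gives $s_{\mu^*}p_v=\delta_{v,r(\mu)}s_{\mu^*}$ (not $\delta_{v,s(\mu)}$), and the identity $s_{\mu^*}s_{\nu^*}=s_{(\nu\mu)^*}$ requires $s(\mu^*)=r(\nu^*)$, that is $r(\mu)=s(\nu)$, so that $\nu\mu$ is composable (not $s(\mu)=r(\nu)$). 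Neither slip affects your conclusion, because in each of these cases the product is either $0$ or a single monomial of degree $d(w)$.
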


\begin{proof}
We will prove this by induction on $|w|$. For $|w|=0$ or $1$, the result is covered by the convention in Remark~\ref{1stconseq}(a) that we can view vertices as paths or ghost paths, and hence can add appropriate factors $s_v=p_v$ or $s_{v^*}=p_v$ without changing $q(w)$.

For $|w|=2$, there are four cases to consider: $w=\l\mu^*$, $w=\l^*\mu$, $w=\lambda\mu$, $w=\mu^*\l^*$. For the first, we have $q(w)=s_{\l}s_{\mu^*}$, and there is nothing to prove. For the second, we apply Lemma~\ref{pathprods}, and observe that $\l\alpha=\mu\beta$ implies $d(\alpha)-d(\beta)=d(\mu)-d(\l)=d(w)$. For the third, we notice that the result is trivial if $q(w)=0$, and if not, (KP2) gives $0\not=q(w)=s_\l p_{s(\l)}p_{r(\mu)}s_{\mu}$, which implies that $s(\l)=r(\mu)$ and that $s_{\l}s_{\mu}=s_{\l\mu}s_{s(\mu)^*}$ belongs to $\KP_R(\L)_{d(w)}$. A similar argument works in the fourth case.

Now suppose that $n\geq 2$  and $q(y)\in  \KP_R(\L)_{d(y)}$ for every word $y$ with $|y|\leq n$. Let $w$ be a word with $|w|=n+1$ and $q(w)\not=0$. If $w$ contains a subword $w_iw_{i+1}=\l\mu$, then inserting vertex idempotents shows that $s(\l)=r(\mu)$, so that $\l$ and $\mu$ are composable in $\L$. We now let $w'$ be the word obtained from $w$ by replacing $w_iw_{i+1}$ with the single path $\l\mu$, and then
\[
q(w)=s_{w_1}\cdots s_{w_{i-1}}s_{\l}s_{\mu}s_{w_{i+2}}\cdots s_{w_{n+1}}
=s_{w_1}\cdots s_{w_{i-1}}s_{\l\mu}s_{w_{i+2}}\cdots s_{w_{n+1}}=q(w').
\]
Since $|w'|=n$ and $d(w')=d(w)$, the inductive hypothesis implies that $q(w)\in \KP_R(\L)_{d(w)}$. A similar argument shows that $q(w)\in \KP_R(\L)_{d(w)}$ whenever $w$ contains a subword $w_iw_{i+1}=\l^*\mu^*$.

If $w$ contains no subword of the form $\l\mu$ or $\l^*\mu^*$, then it must consist of alternating real and ghost paths. In particular, remembering that $|w|=n+1\geq 3$, we see that either $w_1w_2$ or $w_2w_3$ has the form $\l^*\mu$. Now we can use Lemma~\ref{pathprods} to write $q(w)$ as a sum of terms $q(y^i)$ with $|y^i|=n+1$ and $d(y^i)=d(w)$. Each nonzero summand $q(y^i)$ contains a factor of the form $s_{\beta^*}s_{\gamma^*}$ or one of the form $s_{\delta}s_\alpha$, and the argument of the preceding paragraph shows that every $q(y^i)\in \KP_R(\L)_{d(w)}$. Thus so is their sum $q(w)$.
\end{proof}

It remains to prove that the elements $rp_v$ with $r\not=0$ are nonzero, and for this it suffices to produce a Kumjian-Pask $\Lambda$-family $(Q,T)$ in an $R$-algebra such that each $rQ_v$ is nonzero. We do this by modifying the construction in \cite[Proposition 2.11]{KP}. Let $\FF_R(\L^\infty)$ be the free module with basis the infinite path space\label{infinpathrep}. We next fix $v\in \L^0$ and $\l,\mu\in\L^{\not=0}$, and use the composition and factorization constructions of Lemma~\ref{lemma2infinitepaths} to define functions $f_v, f_\l, f_{\mu^*}:\L^\infty\to
\FF_R(\L^\infty)$ by
\begin{align*}
f_v(x)  & =\begin{cases}
x& \text{if }x(0)=v\\
0 & \text{otherwise;}
\end{cases}\\
f_\l(x) &=  \begin{cases} \l x& \text{if }x(0)=s(\l)\\
 0 & \text{otherwise; and}
 \end{cases}\\
f_{\mu^*}(x) & =\begin{cases}
x(d(\mu),\infty)& \text{if }x(0,d(\mu))=\mu\\
0 & \text{otherwise.}
\end{cases}
\end{align*}
The universal property of free modules now gives nonzero endomorphisms $Q_v$, $T_\l$, $T_{\mu^*}:\FF_R(\L^\infty)\to \FF_R(\L^\infty)$ extending
$f_v$, $f_\l$ and $f_{\mu^*}$. \label{infinitepathrep}

It is straightforward to check using Lemma~\ref{lemma2infinitepaths} that $(Q,T)$ is a Kumjian-Pask $\L$-family in $\End(\FF_R(\L^\infty))$. For example, to verify (KP3), suppose that $d(\l)=d(\mu)$ and $x\in \L^\infty$. Then
\begin{align*}
T_{\l^*}T_\mu(x)= & \begin{cases}
T_{\l^*}(\mu x)& \text{if } x(0)=s(\mu),\\
0 & \text{otherwise}
\end{cases} \\
= & \begin{cases}
(\mu x)(d(\l),\infty) \phantom{\overset{X_Z}{Y}} & \text{if } x(0)=s(\mu) \text{ and } (\mu x)(0,d(\l))=\l,\\
0 & \text{otherwise.}
\end{cases}
\end{align*}
Since $d(\l)=d(\mu)$, Lemma~\ref{lemma2infinitepaths} implies that $(\mu x)(0,d(\lambda))=(\mu x)(0,d(\mu))=\mu$ if $r(x)=s(\mu)$, so $T_{\l^*}T_\mu(x)$ vanishes for all $x$ unless $\l=\mu$, and then is $x$ if and only if  $r(x)=s(\mu)$. But this is exactly what $Q_{s(\mu)}$ does to $x$, and hence we have $T_{\l^*}T_\mu=Q_{s(\mu)}$.

Since $(Q,T)$ is a Kumjian-Pask $\Lambda$-family, there exists an
$R$-algebra homomorphism
$\pi_{Q,T}:\KP_R(\Lambda)\to\End(\FF_R(\Lambda^\infty))$ such that
$\pi_{Q,T}(p_v)=Q_v$, $\pi_{Q,T}(s_\lambda)=T_\lambda$ and $\pi_{Q,T}(s_{\mu^*})=T_\mu$.
Since  every vertex $v$ is the range of an infinite path, if $r\neq 0$
then $rQ_v\neq 0$. It follows that $rp_v\neq 0$ too, and this
completes the proof of Theorem~\ref{$KP_R$}.

We call the $R$-algebra homomorphism $\pi_{Q,T}:\KP_R(\Lambda)\to\End(\FF_R(\Lambda^\infty))$  constructed above the
\emph{infinite-path representation of $\KP_R(\Lambda)$}.

\section{The uniqueness theorems}\label{sectionuniqueness}

Let  $\L$ be a row-finite $k$-graph without sources. We write $(p,s)$ for the universal Kumjian-Pask $\L$-family in $\KP_R(\L)$.
In this section we prove graded-uniqueness and Cuntz-Krieger uniqueness theorems for $\KP_R(\L)$. 

\begin{theorem}[The graded-uniqueness theorem]\label{gradeduniquess}
Let $\L$ be a row-finite $k$-graph without sources, $R$  a commutative ring with $1$,  and  $A$  a ${\mathbb Z}^k$-graded ring. If $\pi:\KP_R(\L)\to A$ is a ${\mathbb Z}^k$-graded ring homomorphism such that $\pi(rp_v)\neq 0$ for
all $r\in R\setminus \{0\}$ and $v\in \L^0$, then $\pi$ is injective.
\end{theorem}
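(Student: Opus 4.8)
The plan is to follow the classical strategy for graded-uniqueness theorems (as in \cite{BPRS,T,T2}): use the grading to reduce injectivity of $\pi$ to injectivity on the degree-zero subalgebra $\KP_R(\L)_0$, and then analyse $\KP_R(\L)_0$ directly using the structure afforded by \eqref{spanning} and the Kumjian-Pask relations. The first step is to show that if $a\in\ker\pi$ and $a=\sum_{n\in F}a_n$ is its homogeneous decomposition, then each $a_n\in\ker\pi$; since $\pi$ is graded, $\pi(a)=\sum_n\pi(a_n)$ with $\pi(a_n)\in A_n$, and uniqueness of homogeneous decompositions in $A$ forces each $\pi(a_n)=0$. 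So it suffices to prove $\pi$ is injective on each $\KP_R(\L)_n$, and by multiplying on the left by a suitable $s_{\lambda^*}$ (using (KP2)--(KP3) and row-finiteness to see that $a\mapsto \sum_{\lambda} s_{\lambda^*}a$ over an appropriate finite set of $\lambda$ recovers enough of $a$) one can further reduce to showing $\pi$ is injective on $\KP_R(\L)_0$.

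The heart of the argument is therefore a good description of $\KP_R(\L)_0$. By \eqref{spanning}, $\KP_R(\L)_0=\lsp_R\{s_\lambda s_{\mu^*}:d(\lambda)=d(\mu)\}$, and using Lemma~\ref{pathprods} together with (KP3) one shows that for any finite collection of such spanning elements there is a single large $q\in\N^k$ so that each $s_\lambda s_{\mu^*}$ appearing can be rewritten (via $p_{s(\lambda)}=\sum_{\alpha\in s(\lambda)\L^{q-d(\lambda)}}s_\alpha s_{\alpha^*}$) as a sum of terms $s_{\lambda\alpha}s_{(\mu\alpha)^*}$ with $d(\lambda\alpha)=d(\mu\alpha)=q$. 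Hence a typical element $a\in\KP_R(\L)_0$ can be written as a finite sum $a=\sum_{\lambda,\mu\in\L^q,\ s(\lambda)=s(\mu)} r_{\lambda,\mu}\,s_\lambda s_{\mu^*}$ for one fixed $q$. Now suppose $\pi(a)=0$. For each pair $(\lambda_0,\mu_0)$ with $r_{\lambda_0,\mu_0}\neq 0$ one computes $s_{\lambda_0^*}\,a\,s_{\mu_0}$: by (KP3) the only surviving term is $r_{\lambda_0,\mu_0}\,p_{s(\lambda_0)}$, so $s_{\lambda_0^*}\,a\,s_{\mu_0}=r_{\lambda_0,\mu_0}\,p_{s(\lambda_0)}$. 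Applying $\pi$ gives $0=\pi(s_{\lambda_0^*})\pi(a)\pi(s_{\mu_0})=r_{\lambda_0,\mu_0}\,\pi(p_{s(\lambda_0)})$, and since $r_{\lambda_0,\mu_0}\in R\setminus\{0\}$ the hypothesis $\pi(r p_v)\neq 0$ is contradicted. Hence every coefficient vanishes, $a=0$, and $\pi$ is injective on $\KP_R(\L)_0$.

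The main obstacle is the reduction from injectivity on a single $\KP_R(\L)_n$ to injectivity on $\KP_R(\L)_0$, and more precisely making the ``fix one $q$'' normalisation of elements of $\KP_R(\L)_0$ genuinely work: one must be careful that the rewriting via Lemma~\ref{pathprods} and (KP4) preserves membership in the span and does not secretly introduce cancellation that destroys the diagonal structure needed for the final $s_{\lambda_0^*}(-)s_{\mu_0}$ computation. Once the normal form $a=\sum_{\lambda,\mu\in\L^q} r_{\lambda,\mu}s_\lambda s_{\mu^*}$ is available, the conclusion is essentially forced by (KP3) and the nondegeneracy hypothesis on the $p_v$; the same mechanism will reappear (with an extra aperiodicity input) in the proof of the Cuntz-Krieger uniqueness theorem.
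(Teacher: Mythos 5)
Your overall strategy is workable and your final computation is the right mechanism, but the bridge you propose between the two reductions has a genuine flaw. The first step (if $a\in\ker\pi$ then each homogeneous component $a_n\in\ker\pi$, by gradedness of $\pi$ and uniqueness of homogeneous decompositions in $A$) is correct, and your degree-zero argument -- rewrite via (KP4) so that all $\lambda,\mu$ lie in $\L^q$ for one fixed $q$, then sandwich $s_{\lambda_0^*}(\cdot)s_{\mu_0}$ so that (KP3) collapses everything to $r_{\lambda_0,\mu_0}p_{s(\lambda_0)}$, contradicting $\pi(rp_v)\neq 0$ -- is also correct as written. What fails is the proposed reduction from injectivity on $\KP_R(\L)_n$ to injectivity on $\KP_R(\L)_0$ by multiplying on the left by ghost paths: left multiplication by $s_{\lambda^*}$ shifts degree by $-d(\lambda)\in -\N^k$, so for $k\geq 2$ and $n\in\Z^k$ of mixed sign (say $n=e_1-e_2$) no one-sided multiplication, and no sum $a\mapsto\sum_{\lambda}s_{\lambda^*}a$ over paths of a fixed degree, can move $a$ into degree $0$ at all; and even when $n\in\pm\N^k$, the claim that such a sum ``recovers enough of $a$'' (i.e.\ is nonzero when $a\neq 0$) is precisely the point needing proof -- in the paper this nonvanishing is supplied by the idempotent $\sum_{\beta}s_\beta s_{\beta^*}$ in Lemma~\ref{key}.

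The repair is that the detour through degree zero is unnecessary: your sandwich argument applies verbatim to a homogeneous element of arbitrary degree. Given $0\neq a\in\KP_R(\L)_n$, use (KP4) as in Lemma~\ref{normalform-lem} to write $a=\sum_{(\alpha,\beta)\in F}r_{\alpha,\beta}s_\alpha s_{\beta^*}$ with all coefficients nonzero and all $\beta\in\L^q$; homogeneity forces $d(\alpha)=q+n$ for every term, so for any $(\alpha_0,\beta_0)\in F$ two applications of (KP3) give $s_{\alpha_0^*}a\,s_{\beta_0}=r_{\alpha_0,\beta_0}p_{s(\alpha_0)}$ exactly, and $\pi(a)=0$ would force $\pi(r_{\alpha_0,\beta_0}p_{s(\alpha_0)})=0$, contradicting the hypothesis. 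With that substitution your proof is complete, and it is worth noting how it compares with the paper's: the paper never decomposes $x$ into homogeneous components, but instead puts an arbitrary nonzero $x$ in normal form, shows in Lemma~\ref{key} that some $s_{\delta^*}xs_\gamma$ is nonzero with $0$-graded component $r_{\delta,\gamma}p_{s(\delta)}$, and invokes gradedness of $\pi$ only at the very end to see that this component survives; your version spends the grading hypothesis at the outset and then runs a purely ungraded computation, which is equally legitimate and even avoids the need to know $xs_\gamma\neq 0$, at the price of needing homogeneity to make the sandwich collapse to a single term.
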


The next two lemmas are the first steps in the proofs of both uniqueness theorems.

\begin{lemma}\label{normalform-lem} Every nonzero $x\in \KP_R(\L)$ can be written as a sum $\sum_{(\alpha,\beta)\in F} r_{\alpha,\beta} s_\alpha s_{\beta^*}$ where $F$ is a finite subset of $\Lambda\times\Lambda$, $r_{\alpha,\beta}\in R\setminus\{0\}$ for all $(\alpha,\beta)\in F$, and all the $\beta$ have the same degree.  In this case we say $x$ is written in \emph{normal form}.
\end{lemma}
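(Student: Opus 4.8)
The plan is to start from the description of the grading in \eqref{spanning} and reduce a general element to the claimed form by two successive normalisations. First I would observe that every $x\in\KP_R(\L)$ is a finite $R$-linear combination of products $s_{w_1}s_{w_2}\cdots s_{w_m}$ of generators (since the generators $\{p_v,s_\lambda,s_{\mu^*}\}$ generate $\KP_R(\L)$ as an $R$-algebra). Using relation (KP2) to amalgamate adjacent real paths and adjacent ghost paths, and inserting vertex idempotents where needed, each such product collapses to a word that is an alternating string of real and ghost paths; then repeated application of Lemma~\ref{pathprods} to each interior factor $s_{\lambda^*}s_\mu$ rewrites it as a finite sum of terms $s_\alpha s_{\beta^*}$. (This is essentially the computation already carried out in the proof of the lemma identifying $q(w)\in\KP_R(\L)_{d(w)}$, so I would lean on that.) Collecting terms over $\Lambda\times\Lambda$ and discarding those with zero coefficient, we get $x=\sum_{(\alpha,\beta)\in F}r_{\alpha,\beta}\,s_\alpha s_{\beta^*}$ with $F$ finite and all $r_{\alpha,\beta}\neq 0$; at this stage the $\beta$'s need not have a common degree.

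The second step is to arrange that all the $\beta$'s have the same degree. Fix $n\in\N^k$ with $n\geq d(\beta)$ for every $(\alpha,\beta)\in F$. For each such pair, use (KP3)--(KP4): since $s_{\beta^*}=s_{\beta^*}p_{s(\beta)}$ and $p_{s(\beta)}=\sum_{\gamma\in s(\beta)\L^{n-d(\beta)}}s_\gamma s_{\gamma^*}$ (here $n-d(\beta)$ may be $0$, in which case the sum is just $p_{s(\beta)}$ and nothing changes), we get
\[
s_\alpha s_{\beta^*}=\sum_{\gamma\in s(\beta)\L^{n-d(\beta)}}s_\alpha s_{\beta^*}s_\gamma s_{\gamma^*}
=\sum_{\gamma\in s(\beta)\L^{n-d(\beta)}}s_\alpha s_{(\beta\gamma)^*}\cdot(\text{coeff }1),
\]
wait—more carefully, $s_{\beta^*}s_\gamma$ is not directly of the desired form, so instead I apply $p_{s(\alpha)}=\sum_{\delta\in s(\alpha)\L^{n-d(\beta)}}s_\delta s_{\delta^*}$ on the $\alpha$ side together with $p_{s(\beta)}$ on the $\beta$ side and use Lemma~\ref{pathprods} once more; the clean way is: $s_\alpha s_{\beta^*}=s_\alpha p_{s(\beta)}s_{\beta^*}$ is not it either. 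The correct manoeuvre is to write $s_{\beta^*}=\sum_{\gamma\in s(\beta)\L^{n-d(\beta)}}s_{\beta^*}s_\gamma s_{\gamma^*}=\sum_\gamma s_{(?)}\cdots$; since $s_{\beta^*}s_\gamma$ with $d(\gamma)=n-d(\beta)$ is handled by Lemma~\ref{pathprods} applied to $\lambda=\beta$, $\mu=\beta\gamma$... In short, the mechanism is: $p_{s(\beta)}=\sum_{\gamma\in s(\beta)\Lambda^{n-d(\beta)}}s_\gamma s_{\gamma^*}$ inserted as $s_\alpha s_{\beta^*}=s_\alpha s_{\beta^*}p_{s(\beta)}=\sum_\gamma s_\alpha s_{\beta^*}s_\gamma s_{\gamma^*}$, and then $s_{\beta^*}s_\gamma=\delta$-type terms via Lemma~\ref{pathprods}; but the slickest route, and the one I would actually write, uses instead the identity $s_\alpha s_{\beta^*}=\sum_{\gamma\in s(\alpha)\Lambda^{\,?}}\!\cdots$ obtained by expanding $p_{s(\alpha)}$ and $p_{s(\beta)}$ simultaneously to a common power and invoking Lemma~\ref{pathprods}. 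I'll present it as: apply (KP4) to both $p_{s(\alpha)}$ and $p_{s(\beta)}$ at level $n-d(\alpha)$ and $n-d(\beta)$ respectively, then Lemma~\ref{pathprods} collapses each product $s_{\gamma_\alpha^*}s_{?}$, yielding a sum of terms $s_{\alpha\gamma_\alpha}s_{(\beta\gamma_\beta)^*}$ with $d(\beta\gamma_\beta)=n$ for all surviving terms; the coefficients are still in $R\setminus\{0\}$ possibly after amalgamating, and we drop any that cancel to $0$.

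The main obstacle—really the only subtle point—is this second step: one must choose the insertion so that the new ghost paths genuinely all acquire the same degree $n$ while the products stay in the form $s_\bullet s_{\bullet^*}$, and one must be careful that after collecting terms the coefficients are still nonzero (terms may cancel, and if everything cancels then $x=0$, contradicting $x\neq 0$). The degree bookkeeping is routine given Lemma~\ref{pathprods} and the factorization property, so once the right insertion is identified the argument is short. Handling the corner case $n=d(\beta)$ for some $\beta$ (empty extension) just means those terms are left untouched, which is harmless.
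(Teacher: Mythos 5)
Your overall strategy is the paper's: start from the spanning description \eqref{spanning} (your first paragraph essentially re-derives it, which is harmless but unnecessary, since Theorem~\ref{$KP_R$} already gives $x=\sum r_{\sigma,\tau}s_\sigma s_{\tau^*}$ with nonzero coefficients), then use (KP4) to stretch all the ghost paths to a common degree. But the step that actually does the stretching is exactly where your write-up goes wrong, and as written it does not prove the lemma. The identity you start from, $s_{\beta^*}=s_{\beta^*}p_{s(\beta)}$, is false: (KP2) gives $s_{\beta^*}=p_{s(\beta)}s_{\beta^*}=s_{\beta^*}p_{r(\beta)}$, so putting $p_{s(\beta)}$ on the right of $s_{\beta^*}$ kills the term unless $r(\beta)=s(\beta)$. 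Ironically, the formula you then reject (``$s_\alpha s_{\beta^*}=s_\alpha p_{s(\beta)}s_{\beta^*}$ is not it either'') is precisely the correct move, and it is the paper's: the identity holds because $p_{s(\beta)}s_{\beta^*}=s_{\beta^*}$, and expanding $p_{s(\beta)}$ by (KP4) at level $m-d(\beta)$, where $m:=\bigvee_{(\alpha,\beta)\in F}d(\beta)$, gives by (KP2)
\[
s_\alpha s_{\beta^*}\;=\;\sum_{\gamma\in s(\beta)\Lambda^{m-d(\beta)}}s_{\alpha\gamma}\,s_{(\beta\gamma)^*},
\]
in which every ghost path $\beta\gamma$ has degree exactly $m$; no appeal to Lemma~\ref{pathprods} is needed at this stage.

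By contrast, the mechanism you finally settle on --- expanding $p_{s(\alpha)}$ at level $n-d(\alpha)$ and $p_{s(\beta)}$ at level $n-d(\beta)$ and then collapsing the cross terms $s_{\gamma_\alpha^*}s_{\gamma_\beta}$ with Lemma~\ref{pathprods} --- does not deliver a common degree. Since $d(\gamma_\alpha)=n-d(\alpha)$ and $d(\gamma_\beta)=n-d(\beta)$ differ in general, Lemma~\ref{pathprods} applied at some level $q$ replaces $s_{\gamma_\alpha^*}s_{\gamma_\beta}$ by a sum of terms $s_\mu s_{\nu^*}$ with $d(\nu)=q-d(\gamma_\beta)$, so the resulting monomials are $s_{\alpha\gamma_\alpha\mu}s_{(\beta\gamma_\beta\nu)^*}$ whose ghost paths have degree $q+d(\beta)$, which still varies with $\beta$; your claim that the surviving ghost paths have degree $n$ ignores the factor $\nu$. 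So the single-sided insertion above is not merely ``slicker''; it is the missing step. (Your remarks about discarding coefficients that cancel to zero, and about the degenerate case $m=d(\beta)$, are fine and correspond to the paper's ``substituting back and combining terms''.)
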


\begin{proof}
By Theorem~\ref{$KP_R$}, we can write $x$ as a finite sum $x=\sum_{(\sigma,\tau)\in G} r_{\sigma,\tau} s_{\sigma}
s_{\tau^*}$ with each  $r_{\sigma,\tau}\not=0$. Set $m=\bigvee_{(\sigma,\tau)\in F}d(\tau)$.   For each $(\sigma,\tau)\in G$, applying (KP4) with $n_\tau:=m-d(\tau)$ gives
\[
s_{\sigma}s_{\tau^*}=s_{\sigma}p_{s(\sigma)}s_{\tau^*}=\sum_{\lambda\in s(\sigma)\L^{n_\tau}} s_{\sigma\lambda}s_{(\tau\lambda)^*};
\]
substituting back into the expression for $x$ and combining terms  gives the result.
\end{proof}

\begin{lemma}\label{key}
Suppose that $x$ is a nonzero element of $\KP_R(\L)$ and $x=\sum_{(\alpha,\beta)\in F} r_{\alpha,\beta} s_\alpha s_{\beta^*}$ is in normal form.   Then there exists $\gamma\in F_2:=\{\beta: (\alpha,\beta)\in F \text{\ for some\ }\alpha\in\L\}$ such that
\begin{equation}\label{key1}
0\neq xs_\gamma=\sum_{\alpha\in G} r_{\alpha,\gamma} s_\alpha\quad\text{where\ } G:=\{\alpha:(\alpha,\gamma)\in F\}.
\end{equation}
Further, if $\delta\in G$ then
\begin{equation}\label{key2}
0\neq s_{\delta^*} x s_\gamma=r_{\delta,\gamma}p_{s(\delta)}+\sum_{\{\alpha\in G\;:\;\alpha\neq\delta\}} r_{\alpha,\gamma}s_{\delta^*}s_\alpha,
\end{equation}
and $r_{\delta,\gamma}p_{s(\delta)}$ is the $0$-graded component of $s_{\delta^*} x s_\gamma$.
\end{lemma}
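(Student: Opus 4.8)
The plan is to compute $xs_\gamma$ and then $s_{\delta^*}xs_\gamma$ directly from the Kumjian--Pask relations, to observe that every term of $s_{\delta^*}xs_\gamma$ other than $r_{\delta,\gamma}p_{s(\delta)}$ is homogeneous of nonzero degree (hence does not meet $\KP_R(\L)_0$), and to deduce both non-vanishing assertions from the last clause of Theorem~\ref{$KP_R$}, which guarantees $r_{\delta,\gamma}p_{s(\delta)}\neq 0$.

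First a harmless normalisation. Since $s_\alpha=s_\alpha p_{s(\alpha)}$ and $s_{\beta^*}=p_{s(\beta)}s_{\beta^*}$, orthogonality of the $p_v$ (KP1) forces $s_\alpha s_{\beta^*}=0$ whenever $s(\alpha)\neq s(\beta)$, so I may delete such pairs from $F$ and assume $s(\alpha)=s(\beta)$ throughout; let $n$ be the common degree of the $\beta$'s. As $x\neq 0$ at least one pair survives, so $F$ is nonempty and hence so is $F_2$; I fix an arbitrary $\gamma\in F_2$ — the computation works verbatim for every such $\gamma$ — and set $G:=\{\alpha:(\alpha,\gamma)\in F\}\neq\emptyset$. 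Because $d(\beta)=n=d(\gamma)$ for each $\beta$, relation (KP3) gives $s_{\beta^*}s_\gamma=\delta_{\beta,\gamma}p_{s(\gamma)}$, killing every pair with $\beta\neq\gamma$; since $s(\alpha)=s(\gamma)$ for $\alpha\in G$ we also have $s_\alpha p_{s(\gamma)}=s_\alpha$, and so $xs_\gamma=\sum_{\alpha\in G}r_{\alpha,\gamma}s_\alpha$, the right-hand side of \eqref{key1}. Multiplying on the left by $s_{\delta^*}$ for a fixed $\delta\in G$ and using $s_{\delta^*}s_\delta=p_{s(\delta)}$ (again (KP3), or (KP1) when $d(\delta)=0$) produces the displayed identity in \eqref{key2}.

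It remains to pin down the $0$-graded component of $s_{\delta^*}xs_\gamma$ and to extract the two inequalities. The generators $s_\lambda,s_{\mu^*},p_v$ are homogeneous of degrees $d(\lambda),-d(\mu),0$ respectively — this is essentially the content of \eqref{spanning} — so for $\alpha\in G$ with $\alpha\neq\delta$ the element $s_{\delta^*}s_\alpha$ lies in $\KP_R(\L)_{d(\alpha)-d(\delta)}$. When $d(\alpha)\neq d(\delta)$ this degree is nonzero; when $d(\alpha)=d(\delta)$ the grading argument is vacuous, but then $s_{\delta^*}s_\alpha=0$ outright by (KP3) (or (KP1) if $d(\delta)=0$). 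Either way these cross terms do not meet $\KP_R(\L)_0$, so the $0$-graded component of $s_{\delta^*}xs_\gamma$ is exactly $r_{\delta,\gamma}p_{s(\delta)}$, which is nonzero since $r_{\delta,\gamma}\in R\setminus\{0\}$ (Theorem~\ref{$KP_R$}). Hence $s_{\delta^*}xs_\gamma\neq 0$, proving \eqref{key2}; and since $s_{\delta^*}xs_\gamma=s_{\delta^*}(xs_\gamma)$, we get $xs_\gamma\neq 0$, completing \eqref{key1}. I expect the only genuinely delicate point to be the case split $d(\alpha)=d(\delta)$ versus $d(\alpha)\neq d(\delta)$ in this last paragraph; the remainder is routine bookkeeping with (KP1)--(KP3) and the $\Z^k$-grading, and does not even need Lemma~\ref{pathprods}.
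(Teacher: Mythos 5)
Your proof is correct, but it takes a mildly different route from the paper's. The paper first \emph{selects} $\gamma$: since the $s_\beta s_{\beta^*}$ for $\beta\in F_2$ are mutually orthogonal idempotents, $p=\sum_{\beta\in F_2}s_\beta s_{\beta^*}$ satisfies $xp=x\neq 0$, so some $xs_\gamma\neq 0$; only then does it compute $s_{\delta^*}xs_\gamma$ and identify its $0$-graded component. You instead fix an \emph{arbitrary} $\gamma\in F_2$ (after explicitly discarding the vacuous pairs with $s(\alpha)\neq s(\beta)$ -- a normalisation the paper leaves implicit in its computation of $s_\alpha s_{\beta^*}s_\gamma$), establish the identity \eqref{key2} directly, read off that its $0$-graded component is $r_{\delta,\gamma}p_{s(\delta)}\neq 0$, and then deduce $s_{\delta^*}xs_\gamma\neq 0$ and hence $xs_\gamma\neq 0$. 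This reverses the logical order of \eqref{key1} and \eqref{key2} and proves the slightly stronger fact that \emph{every} $\gamma$ in the cleaned-up $F_2$ works, at the cost of nothing; it also replaces the paper's appeal to Lemma~\ref{pathprods} (which writes $s_{\delta^*}s_\alpha$ as a sum of monomials of degree $d(\alpha)-d(\delta)$) by the simpler observation that the generators are homogeneous, with the case $d(\alpha)=d(\delta)$, $\alpha\neq\delta$ killed outright by (KP3) (or (KP1) for vertices) -- a case split the paper handles the same way via (KP3). Both arguments ultimately rest on the same two pillars: uniqueness of the $\Z^k$-graded decomposition and the clause $rp_v\neq 0$ of Theorem~\ref{$KP_R$}, so the difference is one of economy rather than substance.
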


\begin{proof}
Since all $\beta$ in $F_2$ have the same degree,  (KP3) implies that $\{s_\beta s_{\beta^*}:\beta\in F_2\}$ is a set of mutually orthogonal idempotents.
Then $p=\sum_{\beta\in F_2}s_\beta s_{\beta^*}$ is an idempotent and satisfies $xp=x$. In particular, $xp\neq 0$, and hence there exists $\gamma\in
F_2$ such that $xs_\gamma\neq 0$. Now (KP3) gives
\begin{equation*}\label{followagain}
0\neq xs_\gamma=\sum_{(\alpha,\beta)\in F} r_{\alpha,\beta} s_\alpha s_{\beta^*}s_\gamma
= \sum_{\{(\alpha,\beta)\in F\;:\;\beta=\gamma
\}} r_{\alpha,\beta} s_\alpha=\sum_{\alpha\in G}r_{\alpha,\gamma} s_\alpha,
\end{equation*}
and for $\delta\in G$, we have
\begin{equation*}
s_{\delta^*}xs_\gamma=\sum_{\alpha\in G}r_{\alpha,\gamma}
 s_{\delta^*}s_\alpha=r_{\delta,\gamma} p_{s(\delta)}+\sum_{\{\alpha\in G\;:\;\alpha\neq \delta\}} r_{\alpha,\gamma}
s_{\delta^*}s_\alpha.
\end{equation*}
If $s_{\delta^*}s_\alpha\neq 0$ and $\alpha\neq\delta$, then $d(\alpha)\neq d(\delta)$ by (KP3), and $s_{\delta^*}s_\alpha$ is a sum of monomials $s_\mu s_{\nu^*}$ all of which have degree $d(\mu)-d(\nu)=d(\alpha)-d(\beta)\neq 0$ (see Lemma~\ref{pathprods}). 
Thus $r_{\delta,\gamma} p_{s(\delta)}$ is the $0$-graded
component of $s_{\delta^*}xs_\gamma$. 
Since $r_{\delta,\gamma} p_{s(\delta)}\neq 0$, we have $s_{\delta^*}xs_\gamma\neq 0$ too.\qedhere
\end{proof}

\begin{proof}[Proof of Theorem~\ref{gradeduniquess}]
Let $0\neq x\in \KP_R(\L)$. By Lemma~\ref{normalform-lem}, $x$ can be written in normal form, and by Lemma~\ref{key} there exist a finite set $G$ and $\gamma,\delta\in \L$ such that \eqref{key2} holds and  $r_{\delta,\gamma} p_{s(\delta)}$ is the $0$-graded
component of $s_{\delta^*}xs_\gamma$. Since $\pi$ is $\Z^k$-graded, $\pi(r_{\delta,\gamma} p_{s(\delta)})$ is the $0$-graded component of $\pi(s_{\delta^*}xs_\gamma)$, and since $\pi(r_{\delta,\gamma} p_{s(\delta)})$ is nonzero by assumption, so is $\pi(s_{\delta^*}xs_\gamma)$. Since $\pi$ is a ring homomorphism, we deduce that $\pi(x)\neq 0$, and hence that $\pi$ is injective.
\end{proof}

\begin{remark}
The graded-uniqueness theorem is an analogue of the gauge-invariant uniqueness theorems for graph $C^*$-algebras, and we will discuss the relationship in \S\ref{discussuongiut}. The first gauge-invariant uniqueness theorem was for Cuntz-Krieger algebras \cite[Theorem~2.3]{aHR}; the first versions for graph $C^*$-algebras  and higher-rank graph algebras were \cite[Theorem~2.1]{BPRS} and \cite[Theorem~3.4]{KP}. The graded-uniqueness theorem for Leavitt path algebras was originally derived from the classification of the graded ideals; direct proofs were given in \cite{RMalaga} and \cite{T}. Theorem~\ref{gradeduniquess} and its proof were motivated by \cite[Theorem~6.5]{T2}.
\end{remark}

For the  Cuntz-Krieger uniqueness theorem, we need an aperiodicity condition on $\Lambda$. Following Robertson and Sims \cite{RS}, we say
 that a $k$-graph $\L$ is  \emph{aperiodic}
if for every $v\in \L^0$ and $m\neq n\in {\mathbb N}^k$ there exists $\lambda\in v\L$ such that
$d(\lambda)\geq m\vee n$ and 
\begin{equation}\label{fpaperiodic}
\lambda(m,m+d(\lambda)-(m\vee n))\neq \lambda(n,n+d(\lambda)-(m\vee n)).
\end{equation} 
We say $\L$ is \emph{periodic} if $\L$ is not aperiodic.  Several aperiodicity conditions appear in the literature, but they are all equivalent when $\L$ is row-finite without sources.  We find the finite path formulation of aperiodicity from \cite{RS}  easier to understand,  and it allows us to  borrow arguments from  \cite{HRSW} which do not require readers to know about the different formulations in  \cite{KP} and \cite{RSY}.

\begin{example} Let $\Lambda$ be a row-finite $1$-graph without sources, and let $E=(E^0, E^1, r, s)$ be the associated directed graph.  
Then $\Lambda$ is aperiodic if and only if 
for every $v\in E^0$ and every $m,n\in {\mathbb N}$ with $m<n$, there exists $\lambda\in E^*$ with $r(\lambda)=v$, $|\lambda|\geq n$ and 
$\lambda_{m+1}\dots \lambda_{m+|\lambda|-n}\neq \lambda_{n+1}\dots\lambda_{|\lambda|}$.  \end{example}

The following reassuring lemma tells us that, for a directed graph, aperiodicity is equivalent to the usual hypothesis of Cuntz-Krieger uniqueness theorems.

\begin{lemma}\label{AperiodicityConditionL} Let $\L$ be a $1$-graph and $E$ its associated directed graph.
Then $\L$ is aperiodic if and only if every cycle in $E$ has an entry.
\end{lemma}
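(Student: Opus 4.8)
The plan is to prove both implications by contraposition, using the finite-path description of aperiodicity recorded in the Example just before the lemma, together with the standing fact that $\L$ (hence $E$) has no sources. Suppose first that a cycle $\mu=\mu_1\cdots\mu_\ell$ in $E$ has no entry; I will show $\L$ is not aperiodic. Having no entry means that whenever $r(e)=r(\mu_i)$ for an edge $e$ then $e=\mu_i$, so each vertex $r(\mu_i)$ receives exactly one edge of $E$. A routine induction on $N$ then shows that for every $i$ the only path of length $N$ in $E$ with range $r(\mu_i)$ is the length-$N$ prefix of the cyclic string $\mu_i\mu_{i+1}\cdots$. In particular, writing $v:=r(\mu)=r(\mu_1)$, every $\lambda\in v\L$ with $|\lambda|\geq\ell$ is a prefix of the $\ell$-periodic string $\mu_1\mu_2\cdots$, so $\lambda_j=\lambda_{j+\ell}$ for $1\leq j\leq|\lambda|-\ell$. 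Thus the criterion of the Example fails at $v$ with $m=0$ and $n=\ell$, and $\L$ is not aperiodic.

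Conversely, suppose $\L$ is not aperiodic. By the Example there are $v\in E^0$ and $m<n$ in $\N$ such that, with $p:=n-m\geq 1$, every $\lambda\in v\L$ with $|\lambda|\geq n$ satisfies $\lambda_j=\lambda_{j+p}$ for $m+1\leq j\leq|\lambda|-p$. Fix $x\in v\L^\infty$ (possible since $\L$ has no sources); applying this identity to the initial segments $x(0,N)$ and letting $N\to\infty$ gives $x_j=x_{j+p}$ for all $j\geq m+1$. I claim that every vertex of the form $r(x_j)$ with $j\geq m+1$ receives exactly one edge of $E$. Granting this, $C_0:=x_{m+1}\cdots x_{m+p}$ is a closed path in $E$ (its source is $s(x_{m+p})=r(x_{m+p+1})=r(x_{m+1})$, by periodicity) none of whose vertices receives an extra edge, so $C_0$ has no entry, and hence neither does any simple cycle contained in $C_0$; this produces the required cycle.

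To prove the claim, suppose $j_1\geq m+1$ and $f$ is an edge with $r(f)=r(x_{j_1})$ and $f\neq x_{j_1}$. Since $x_{j_1+kp}=x_{j_1}$ for all $k\geq 0$, after replacing $j_1$ by $j_1+kp$ with $k$ large we may assume $j_1-1\geq m+p$. As $\L$ has no sources we can extend $x_1\cdots x_{j_1-1}f$ to an infinite path $y\in v\L^\infty$, so $y_j=x_j$ for $j<j_1$ while $y_{j_1}=f\neq x_{j_1}$. Periodicity applied to $y$ gives $y_j=y_{j+p}$ for $j\geq m+1$. Now $y$ and $x$ agree on the block $\{m+1,\dots,j_1-1\}$, which consists of at least $p$ consecutive integers; feeding this into the relations $y_j=y_{j+p}$ and $x_j=x_{j+p}$ (both valid for $j\geq m+1$) and iterating $p$ steps at a time forces $y_j=x_j$ for all $j\geq m+1$, contradicting $y_{j_1}=f\neq x_{j_1}$.

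The easy half is the first implication; the work is in the converse, and in particular in that last iteration. The subtlety is that failure of aperiodicity is a statement about all sufficiently long paths of range $v$ simultaneously, so to contradict it the entry $f$ must be spliced in far enough along the tail of $x$ — at a position $j_1$ with $j_1-1\geq m+p$ — that the interval on which the new path $y$ still agrees with $x$ is at least $p$ long and can therefore be bootstrapped, via the two period-$p$ relations, into agreement on all indices $\geq m+1$. The periodicity of the tail of $x$ is exactly what makes such a large $j_1$ available, so this is where the hypothesis does its job.
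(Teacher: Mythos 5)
Your proof is correct, and while the forward implication (a cycle without an entry forces periodicity at its range vertex, taking $m=0$ and $n$ the length of the cycle) is essentially the paper's own argument, your treatment of the other implication takes a genuinely different route. The paper proves ``every cycle has an entry $\Rightarrow$ aperiodic'' directly: it fixes $v$ and $m<n$ and exhibits a witness path for the finite-path condition, splitting into the case where $v$ can be reached from a cycle (wind around the cycle many times and finish with its entry, attached to a connecting path $\alpha$) and the case where it cannot (any path of length $>n$ works, since equality of the two segments would force a return path, hence a cycle, reaching $v$). You instead prove the contrapositive: from a failure of aperiodicity at $(v,m,n)$ you pass to an infinite path $x\in v\Lambda^\infty$ whose tail is $p$-periodic ($p=n-m$), and your splicing argument --- insert a putative second edge at a position $j_1$ with $j_1-1\geq m+p$, extend to an infinite path, and bootstrap the two period-$p$ relations from the common block $\{m+1,\dots,j_1-1\}$ --- shows no vertex along the tail receives a second edge, so the block $x_{m+1}\cdots x_{m+p}$ is a closed path (hence contains a cycle) without an entry. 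Both are sound; the paper's argument is shorter and purely finite but needs the case analysis and a slightly fiddly placement of the entry, whereas yours avoids the case split, makes explicit where the no-sources hypothesis enters (building and extending infinite paths --- the paper's proof also uses it, implicitly, when it chooses arbitrarily long paths with range $v$), and is closer in spirit to the infinite-path periodicity arguments of Lemma~\ref{Lambdaperiodic} and Lemma~\ref{kernelofpathrep} later in the paper.
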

\begin{proof}
Suppose that $E$ has a cycle $\mu$ of length $k\geq 1$ without an entry, and take $v=r(\mu)$, $m=0$ and $n=k$.  Since $\mu$ has no entry, the only paths $\lambda$ with $r(\lambda)=r(\mu)$ and length at least $k$ have the form $\mu^l\mu'$, where $l\geq 1$ and $\mu=\mu'\mu''$; then  $\lambda_1\cdots\lambda_{|\lambda|-k}=\mu^{l-1}\mu'=\lambda_{k+1}\cdots\lambda_{|\lambda|}$ for every such $\lambda$, which shows that $\Lambda$ is periodic.

Conversely, suppose that every cycle in $E$ has an entry.   Fix $v\in E^0$ and $m<n$ in $\N$.  First, suppose that $v$ can be reached from a cycle $\mu$, that is, there exists $\alpha$ with $r(\alpha)=v$ such that $\alpha\mu$ is a path.  Then $\mu$ has an entry $e\in E^1$, and we may suppose by adjusting $\alpha$ that $s(\mu)=r(e)$.  Now choose a path of the form $\lambda=\alpha\mu\mu\dots\mu e$ such that $\lambda_m$ is an edge in $\mu$ and $|\lambda|\geq n$.  Then $\lambda_{m+|\lambda|-n}\neq\lambda_{|\lambda|}$.  Second, suppose that $v$ cannot be reached from a cycle. Choose $\lambda$ with $r(\lambda)=v$ and $|\lambda|>n$.  Then $\lambda_{m+1}\dots \lambda_{m+|\lambda|-n}\neq  \lambda_{n+1}\dots\lambda_{|\lambda|}$ because otherwise $\lambda_{m+1}\dots\lambda_n$ would be a return path which connects to $v$, and which would contain a cycle connecting to $v$. So either way, the aperiodicity condition holds for $m$, $n$ and $v$, and $\L$ is aperiodic.
\end{proof}

We can now state our second uniqueness theorem.

\begin{theorem}[The Cuntz-Krieger uniqueness theorem]\label{CKuniqueness}  Let $\L$ be an aperiodic row-finite $k$-graph without sources, let $R$ be a commutative ring with $1$, and let $A$ be a ring. If
$\pi:\KP_R(\L)\to A$ is a ring homomorphism such that $\pi(rp_v)\neq 0$ for all $r\in R\setminus\{0\}$ and $v\in \L^0$, then $\pi$ is injective.
\end{theorem}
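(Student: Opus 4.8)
The plan is to mimic the structure of the proof of the graded-uniqueness theorem (Theorem~\ref{gradeduniquess}), using Lemmas~\ref{normalform-lem} and~\ref{key} to reduce an arbitrary nonzero $x\in\KP_R(\L)$ to an element of the form $y=s_{\delta^*}xs_\gamma$ whose $0$-graded component is $r_{\delta,\gamma}p_{s(\delta)}$ with $r_{\delta,\gamma}\neq 0$. In the graded case one concludes immediately because $\pi$ preserves the grading; here we have no grading to exploit, so the aperiodicity hypothesis must be used to manufacture, from $y$, an element that $\pi$ cannot kill. The key idea (following \cite{HRSW} and the classical Cuntz--Krieger arguments) is that aperiodicity lets us find a path that ``isolates'' the diagonal term $p_{s(\delta)}$: we want to produce a single path $\lambda$ such that $s_{\lambda^*}ys_\lambda$ (or $p_w y p_w$ for a suitable vertex projection $p_w$) equals exactly $r_{\delta,\gamma}p_w$ for some vertex $w$, with all the off-diagonal monomials $s_{\delta^*}s_\alpha$ ($\alpha\neq\delta$) being annihilated.

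Concretely, I would first normalise so that $\delta=s(\delta)=:v$ is a vertex (replace $x$ by $s_{\delta^*}xs_\gamma$ and relabel), so that $y=r_v p_v+\sum_{\alpha\in H}r_\alpha s_\alpha s_{\beta_\alpha^*}$ where each monomial $s_\alpha s_{\beta_\alpha^*}$ has nonzero degree and $p_v y p_v=y$. Using (KP2)/(KP4) one can arrange all the $\alpha$ (and $\beta_\alpha$) to have a common degree, and then, for each such monomial with $d(\alpha)=m\neq n=d(\beta_\alpha)$, apply the aperiodicity condition: there is $\lambda\in v\L$ with $d(\lambda)\ge m\vee n$ and $\lambda(m,m+d(\lambda)-(m\vee n))\neq\lambda(n,n+d(\lambda)-(m\vee n))$. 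The point of the finite-path formulation of \cite{RS} is exactly that such a $\lambda$ forces $s_{\lambda^*}s_\alpha s_{\beta_\alpha^*}s_\lambda=0$: expanding $s_{\lambda^*}s_\alpha$ and $s_{\beta_\alpha^*}s_\lambda$ via Lemma~\ref{pathprods} and using (KP3), the inequality \eqref{fpaperiodic} is precisely what makes the resulting coefficients vanish. One then needs to choose a \emph{single} $\lambda$ that simultaneously kills all finitely many off-diagonal monomials; this is done by successively extending $\lambda$ (each off-diagonal term contributes one inequality of the form \eqref{fpaperiodic}, and extending a path preserves such an inequality), which is possible because $\L$ has no sources and is row-finite. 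With such a $\lambda$ in hand, $s_{\lambda^*}ys_\lambda=r_v s_{\lambda^*}p_v s_\lambda=r_v p_{s(\lambda)}$, which is nonzero, so $\pi(s_{\lambda^*}ys_\lambda)=r_v\pi(p_{s(\lambda)})\neq 0$ by hypothesis; hence $\pi(x)\neq 0$ and $\pi$ is injective.

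The main obstacle is the step of choosing one path $\lambda$ that works for all the finitely many off-diagonal monomials at once, and verifying carefully that extending $\lambda$ really does preserve the relevant inequalities and that after extension each $s_{\lambda^*}s_\alpha s_{\beta_\alpha^*}s_\lambda$ genuinely collapses to $0$ rather than merely to a term of nonzero degree. The bookkeeping here --- expanding via Lemma~\ref{pathprods}, tracking which $(m,n)$ pair each monomial produces, and confirming that \eqref{fpaperiodic} exactly matches the condition for the collapse --- is where the real work lies; once it is done, the conclusion follows formally just as in the graded case. A secondary subtlety is the reduction to the case $\delta=v$ a vertex and arranging a common degree for the remaining monomials, but this is routine manipulation with (KP2), (KP3) and (KP4) of the kind already carried out in Lemmas~\ref{normalform-lem} and~\ref{key}.
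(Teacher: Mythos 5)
Your overall strategy is the same as the paper's: write $x$ in normal form, use Lemma~\ref{key} to isolate a $0$-graded term $r_{\delta,\gamma}p_{s(\delta)}$, then compress by $s_{\lambda^*}\cdot s_{\lambda}$ for a path $\lambda$ supplied by aperiodicity so that only $r_{\delta,\gamma}p_w$ survives, and finally apply $\pi$ (this is exactly Proposition~\ref{idealvertex} in the paper). Your per-pair mechanism is also right: if $\lambda$ satisfies \eqref{fpaperiodic} for a fixed pair $(m,n)$, then for every monomial $s_\alpha s_{\beta^*}$ with $d(\alpha)=m$, $d(\beta)=n$, $r(\alpha)=r(\beta)=v$, either $\lambda(0,m)\neq\alpha$ or $\lambda(0,n)\neq\beta$ (so the compression vanishes), or else the compression is $s_{\lambda(m,d(\lambda))^*}s_{\lambda(n,d(\lambda))}$, and the two paths have unequal initial segments of equal degree $d(\lambda)-(m\vee n)$, hence no common extension, so the product is $0$ by Lemma~\ref{pathprods} and (KP3).

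The genuine gap is exactly at the step you flag as the main obstacle: producing a \emph{single} $\lambda$ that satisfies \eqref{fpaperiodic} simultaneously for all the finitely many degree pairs. Your claim that this follows ``by successively extending $\lambda$'' is not justified. It is true that an inequality of type \eqref{fpaperiodic}, once achieved, persists when $\lambda$ is extended (equal longer segments would have equal initial segments of equal degree). But to \emph{achieve} a new inequality for a pair $(m',n')$ you would need an extension $\lambda\tau$ whose segments based at the offsets $m'$ and $n'$ differ; these segments begin inside the already-chosen path $\lambda$, so applying the aperiodicity condition at $s(\lambda)$ and appending the resulting path does not give what is needed, while applying it at $v$ only yields \emph{some} path from $v$, not one extending $\lambda$. (Note also that your preliminary claim that (KP2)/(KP4) let you give all the $\alpha$ \emph{and} all the $\beta_\alpha$ a common degree is false: expanding with (KP4) adds the same degree to both factors of a monomial, so you can normalise one side's degrees but not both; hence several distinct pairs $(m,n)$ genuinely occur and the simultaneity problem cannot be defined away.) This uniform statement is precisely what the paper imports as Lemma~\ref{aperiodicitylemma} (property \eqref{Robbie'sLemma}, quoted from \cite{HRSW}), whose proof is not a routine iteration of \eqref{fpaperiodic}; one either proves that lemma or passes through an equivalent formulation of aperiodicity (e.g.\ via aperiodic infinite paths), which the paper deliberately avoids. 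Without supplying such an argument, your proof is incomplete at its crucial step.
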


We need two preliminary results for the proof. Lemma~\ref{aperiodicitylemma} was an ingredient in the proof of the $C^*$-algebraic uniqueness theorem in \cite{HRSW}, and Proposition~\ref{idealvertex} will be needed again in our analysis of the ideal structure in \S\ref{sectionsimple}.

\begin{lemma}\label{aperiodicitylemma}\textup{(\cite[Lemma~6.2]{HRSW})} Suppose that $\L$  an aperiodic row-finite $k$-graph without sources, and fix $v\in \L^0$ and $m\in {\mathbb N}^k$.  Then there exists $\lambda\in \L$ with $r(\lambda)=v$ and $d(\lambda)\geq m$ such that 
\begin{equation}\label{Robbie'sLemma}\left.\begin{array}{l} \alpha, \beta\in \L,\ s(\alpha)=s(\beta)=v,\
 d(\alpha),d(\beta)\leq m, \\
\text{and }(\alpha\lambda)(0,d(\lambda))=(\beta\lambda)(0,d(\lambda))\end{array}\right\}\Longrightarrow \alpha=\beta.
\end{equation}
\end{lemma}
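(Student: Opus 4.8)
The plan is to reduce \eqref{Robbie'sLemma} to a finite family of ``no‑period'' inequalities for $\lambda$, and then to read off a suitable $\lambda$ from a single aperiodic infinite path with range $v$. First I would note that pairs with $d(\alpha)=d(\beta)$ never cause trouble: if $d(\alpha)=d(\beta)=:n'$ and $(\alpha\lambda)(0,d(\lambda))=(\beta\lambda)(0,d(\lambda))=:\mu$, then, as $n'\le m\le d(\lambda)$, this common path equals both $\alpha\,\lambda(0,d(\lambda)-n')$ and $\beta\,\lambda(0,d(\lambda)-n')$, so $\alpha=\mu(0,n')=\beta$. Thus I only need to prevent ``bad pairs'' with $d(\alpha)\neq d(\beta)$. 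For such a pair, with $d(\alpha)=m'$ and $d(\beta)=n'$, put $p:=m'\wedge n'$, $a:=m'-p$ and $b:=n'-p$, so that $a\wedge b=0$, $a\ne b$ and $a\vee b=(m'\vee n')-p\le m$. Applying the factorization property to $\mu:=(\alpha\lambda)(0,d(\lambda))=\alpha\,\lambda(0,d(\lambda)-m')=\beta\,\lambda(0,d(\lambda)-n')$ gives $\mu(m',d(\mu))=\lambda(0,d(\lambda)-m')$ and $\mu(n',d(\mu))=\lambda(0,d(\lambda)-n')$; since $(m'\vee n')-m'=b$ and $(m'\vee n')-n'=a$, comparing the two resulting expressions for $\mu(m'\vee n',d(\mu))$ yields $\lambda(b,d(\lambda)-m')=\lambda(a,d(\lambda)-n')$, and passing to the prefixes of degree $d(\lambda)-m$ of the two sides (legitimate because $m'\vee n'\le m$) shows that a bad pair forces $\lambda(a,a+d(\lambda)-m)=\lambda(b,b+d(\lambda)-m)$. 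Here $(a,b)$ ranges over the \emph{finite} set $D:=\{(a,b)\in\N^k\times\N^k:a\wedge b=0,\ a\ne b,\ a\vee b\le m\}$.

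It therefore suffices to produce $\lambda\in v\Lambda$ with $d(\lambda)\ge m$ such that $\lambda(a,a+d(\lambda)-m)\neq\lambda(b,b+d(\lambda)-m)$ for every $(a,b)\in D$. For this I would invoke the equivalence of the standard aperiodicity conditions for row‑finite $k$-graphs without sources (see \cite{RS}): aperiodicity of $\L$ provides an infinite path $x\in v\Lambda^\infty$ with $x(c,\infty)\ne x(c',\infty)$ whenever $c\ne c'$. For each $(a,b)\in D$ the infinite paths $x(a,\infty)$ and $x(b,\infty)$ are distinct, hence disagree on some initial segment, say $x(a,a+M_{a,b})\ne x(b,b+M_{a,b})$ for some $M_{a,b}\in\N^k$. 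Set $N:=m+\bigvee_{(a,b)\in D}M_{a,b}$ (interpreted as $0$ when $D=\varnothing$) and $\lambda:=x(0,N)$; then $r(\lambda)=x(0)=v$ and $d(\lambda)=N\ge m$. Given $(a,b)\in D$, we have $a,b\le a\vee b\le m\le N$, so $\lambda(a,a+N-m)=x(a,a+N-m)$ and $\lambda(b,b+N-m)=x(b,b+N-m)$; and since $N-m\ge M_{a,b}$, these are distinct, as otherwise their degree‑$M_{a,b}$ prefixes would agree. By the first paragraph $\lambda$ then admits no bad pair, which is \eqref{Robbie'sLemma}.

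I expect the main obstacle to be the derivation in the first paragraph: the factorization property has to be combined carefully with the partial order on $\N^k$ (the meets and joins $m'\wedge n'$, $m'\vee n'$) to extract the no‑period equality and to confirm that only the finite family $D$ matters. One could alternatively avoid the equivalence of aperiodicity formulations altogether and build $\lambda$ iteratively, treating the finitely many $(a,b)\in D$ one at a time by extending a path out of $v$ using the finite‑path aperiodicity condition at its current source vertex, and checking each time (again by a factorization‑property computation) that an extension never destroys a no‑period relation already arranged; this works but is appreciably more laborious.
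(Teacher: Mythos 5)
Your proof is correct. Note that the paper itself offers no proof of this lemma: it is quoted from \cite{HRSW}, and the authors explain that they chose the finite-path formulation of aperiodicity precisely so that they could borrow the \cite{HRSW} argument without asking the reader to know the other formulations. Your route is genuinely different: you pass through the infinite-path (Kumjian--Pask) formulation by invoking the equivalence of the aperiodicity conditions for row-finite $k$-graphs without sources (\cite[Lemma~3.2]{RS}, which the paper does cite, so there is no circularity), pick an aperiodic $x\in v\Lambda^\infty$, and take $\lambda=x(0,N)$ for $N=m+\bigvee_{(a,b)\in D}M_{a,b}$. The key reduction is also yours and it checks out: equal-degree pairs are harmless by uniqueness of factorization, and for $d(\alpha)=m'\neq n'=d(\beta)$ the factorization bookkeeping (with $p=m'\wedge n'$, $a=m'-p$, $b=n'-p$, so $(m'\vee n')-m'=b$ and $(m'\vee n')-n'=a$) correctly yields $\lambda(a,a+d(\lambda)-m)=\lambda(b,b+d(\lambda)-m)$, so only the finite set $D$ of pairs with $a\wedge b=0$, $a\neq b$, $a\vee b\leq m$ must be excluded, and your choice of $N$ excludes them all (the degenerate case $D=\emptyset$, e.g.\ $m=0$, is handled by your convention). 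What the two approaches buy: yours is short and self-contained modulo \cite[Lemma~3.2]{RS}, at the cost of importing exactly the equivalence of formulations that this paper's exposition was designed to sidestep; the iterative construction you sketch at the end --- extending a path out of $v$ and killing the finitely many pairs in $D$ one at a time using the finite-path condition at the current source --- is essentially the \cite{HRSW}-style argument the paper defers to, more laborious but formulation-free.
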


\begin{proposition}\label{idealvertex} Let $\L$ be an aperiodic row-finite  $k$-graph without sources and let $R$ be a commutative ring with $1$. Let $x=\sum_{(\alpha,\beta)\in F}r_{\alpha,\beta}s_{\alpha}s_{\beta^*}$ be a nonzero element of  $\KP_R(\L)$ in normal form.
Then there exist $\sigma,\tau\in \L$, $(\delta,\gamma)\in F$ and $w\in \L^0$ such that $s_{\sigma^*}x s_\tau=r_{\delta,\gamma}p_w$.
\end{proposition}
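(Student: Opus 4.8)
The plan is to cut $x$ down in two stages. First I would apply Lemma~\ref{key} to the given normal form of $x$: this produces some $\gamma\in F_2$, and, for any $\delta$ in the set $G:=\{\alpha:(\alpha,\gamma)\in F\}$ (which is nonempty since $xs_\gamma\neq0$),
\[
s_{\delta^*}xs_\gamma=r_{\delta,\gamma}p_{s(\delta)}+\sum_{\alpha\in G\setminus\{\delta\}}r_{\alpha,\gamma}\,s_{\delta^*}s_\alpha,
\]
with $r_{\delta,\gamma}p_{s(\delta)}$ the $0$-graded component. Writing $u:=s(\delta)$, I would first observe that $u=s(\gamma)$: since $s_{\delta^*}xs_\gamma=p_{s(\delta)}(s_{\delta^*}xs_\gamma)p_{s(\gamma)}$ by (KP2), its $0$-graded component equals $r_{\delta,\gamma}p_{s(\delta)}p_{s(\gamma)}$, which is nonzero, forcing $s(\delta)=s(\gamma)$. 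By Lemma~\ref{pathprods} every monomial appearing in each $s_{\delta^*}s_\alpha$ with $\alpha\neq\delta$ has degree $d(\alpha)-d(\delta)\neq0$, so these ``off-diagonal'' terms are exactly what I must destroy.

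For the second stage I would invoke aperiodicity. Set $m:=\bigvee_{\alpha\in G}d(\alpha)$ and apply Lemma~\ref{aperiodicitylemma} at the vertex $u$ with this $m$, obtaining $\lambda\in\L$ with $r(\lambda)=u$, $d(\lambda)\geq m$, and the stated separation property. Then I would take $\sigma:=\delta\lambda$, $\tau:=\gamma\lambda$, $w:=s(\lambda)$ (these compositions make sense because $s(\delta)=s(\gamma)=u=r(\lambda)$). The crux is to show
\[
s_{(\delta\lambda)^*}s_{\alpha\lambda}=0\qquad\text{for every }\alpha\in G\setminus\{\delta\}.
\]
Here is the argument I have in mind: if some $s_{(\delta\lambda)^*}s_{\alpha\lambda}$ were nonzero, then expanding it by Lemma~\ref{pathprods} (with any $Q\geq d(\delta\lambda)\vee d(\alpha\lambda)$) the index set of the resulting sum would be nonempty, producing a path $\theta$ of degree $Q$ having both $\delta\lambda$ and $\alpha\lambda$ as initial segments. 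Since $d(\lambda)\leq d(\delta\lambda)$ and $d(\lambda)\leq d(\alpha\lambda)$, the degree-$d(\lambda)$ initial segment of $\theta$ is at once $(\delta\lambda)(0,d(\lambda))$ and $(\alpha\lambda)(0,d(\lambda))$, so these agree. As $s(\delta)=s(\alpha)=u=r(\lambda)$ and $d(\delta),d(\alpha)\leq m$, the separation property of $\lambda$ then forces $\delta=\alpha$, contradicting $\alpha\neq\delta$; hence the index set is empty and the product vanishes.

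To finish, I would note that $s_{(\delta\lambda)^*}s_{\alpha\lambda}=s_{\lambda^*}s_{\delta^*}s_\alpha s_\lambda$ by (KP2), so
\[
s_{\sigma^*}xs_\tau=s_{\lambda^*}\bigl(s_{\delta^*}xs_\gamma\bigr)s_\lambda
=r_{\delta,\gamma}\,s_{\lambda^*}p_u s_\lambda+\sum_{\alpha\in G\setminus\{\delta\}}r_{\alpha,\gamma}\,s_{(\delta\lambda)^*}s_{\alpha\lambda}
=r_{\delta,\gamma}\,s_{\lambda^*}s_\lambda=r_{\delta,\gamma}\,p_{s(\lambda)}=r_{\delta,\gamma}\,p_w,
\]
using $p_u s_\lambda=s_\lambda$ and (KP3); since $(\delta,\gamma)\in F$ this is precisely the asserted statement. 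The step I expect to be the main obstacle is the aperiodicity bookkeeping: choosing the base vertex correctly (it must be $u=s(\delta)=s(\gamma)$) and the degree bound correctly ($m=\bigvee_{\alpha\in G}d(\alpha)$, so that Lemma~\ref{aperiodicitylemma} applies to every pair $\{\delta,\alpha\}$), together with the observation that although $\delta\lambda$ and $\alpha\lambda$ have unequal total degrees and so cannot be compared directly, their common initial segment with $\theta$ already has degree at least $d(\lambda)$ — which is all that Lemma~\ref{aperiodicitylemma} needs. The remaining manipulations are routine use of (KP1)--(KP4) and Lemma~\ref{pathprods}.
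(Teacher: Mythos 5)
Your proposal is correct and follows essentially the same route as the paper: Lemma~\ref{key} to isolate the $0$-graded component, then Lemma~\ref{aperiodicitylemma} at $v=s(\delta)$ with $m=\bigvee_{\alpha\in G}d(\alpha)$, taking $\sigma=\delta\lambda$, $\tau=\gamma\lambda$. The only (harmless) cosmetic difference is in killing the cross terms: the paper factors $s_{(\delta\lambda)^*}s_{\alpha\lambda}$ through the degree-$d(\lambda)$ initial segments and applies (KP3) directly, whereas you extract a common extension via Lemma~\ref{pathprods}; both reduce to $(\delta\lambda)(0,d(\lambda))=(\alpha\lambda)(0,d(\lambda))$ and the separation property.
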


\begin{proof}
Lemma~\ref{key} implies that there exists $\gamma\in \Lambda$ such that $G:=\{\alpha:(\alpha,\gamma)\in F\}$ is nonempty and
\begin{equation*}
0\neq
s_{\delta^*}xs_\gamma=r_{\delta,\gamma} p_{s(\delta)}+\sum_{\{\alpha\in G\;:\;\alpha\neq \delta\}} r_{\alpha,\gamma}
s_{\delta^*}s_\alpha\quad\text{for every $\delta\in G$.}
\end{equation*}
Since $\L$ is aperiodic we can apply Lemma~\ref{aperiodicitylemma} with $v=s(\delta)$ and $m=\bigvee_{\alpha\in G}d(\alpha)$ to find
$\lambda\in s(\delta)\L$ with $d(\lambda)\geq m$ such that  \eqref{Robbie'sLemma} holds. Now 
\begin{equation}
s_{\lambda^*}(s_{\delta^*}xs_\gamma)s_\lambda=r_{\delta,\gamma}
p_{s(\lambda)}+\sum_{\{\alpha\in G\;:\;\alpha\neq \delta\}} r_{\alpha,\gamma}
s_{(\delta\lambda)^*}s_{\alpha\lambda}.\label{eq-ac}
\end{equation}
If the summand $s_{(\delta\lambda)^*}s_{\alpha\lambda}$ is nonzero, then $s_{(\delta\lambda)(0,d(\lambda))^*}s_{(\alpha\lambda)(0,d(\lambda))}$ is nonzero, (KP3) implies
that $(\delta\lambda)(0,d(\lambda))=(\alpha\lambda)(0,d(\lambda))$, and \eqref{Robbie'sLemma} implies that $\alpha=\delta$. Thus \eqref{eq-ac} collapses to $s_{(\delta\lambda)^*}xs_{\gamma\lambda} =r_{\delta,\gamma} p_{s(\lambda)}$, and we can take $\sigma=\delta\lambda$ and $\tau=\gamma\lambda$.\end{proof}

\begin{proof}[Proof of Theorem~\ref{CKuniqueness}]
Let $0\neq x\in \KP_R(\L)$. By Lemma~\ref{normalform-lem} we can write   $x$  in normal
form. By Proposition~\ref{idealvertex} there exist $\sigma,\tau\in \L$ and $r\in R\setminus\{0\}$ such that $s_{\sigma^*}x s_\tau=rp_w$ for some $w\in \L^0$. Now  \[
\pi(s_{\sigma^*})\pi(x)\pi(s_\tau)=\pi(s_{\sigma^*}x s_\tau)=
\pi(rp_w)\neq 0
\] 
by assumption, and $\pi(x)\neq 0$. Thus $\pi$ is injective.
\end{proof}

The Cuntz-Krieger uniqueness theorem immediately gives:

\begin{cor}\label{CKimplypathrep}
Let $\Lambda$ be an aperiodic row-finite $k$-graph without sources. Then the infinite-path representation $\pi_{Q,T}:\KP_R(\Lambda)\to \End(\FF_R(\Lambda^\infty))$ from page~\pageref{infinpathrep} is injective.
\end{cor}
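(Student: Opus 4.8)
The plan is to apply the Cuntz-Krieger uniqueness theorem (Theorem~\ref{CKuniqueness}) to the infinite-path representation $\pi_{Q,T}\colon\KP_R(\Lambda)\to\End(\FF_R(\Lambda^\infty))$. The hypotheses of Theorem~\ref{CKuniqueness} are exactly that $\Lambda$ is aperiodic, row-finite, without sources, and that $\pi(rp_v)\neq 0$ for all $r\in R\setminus\{0\}$ and $v\in\Lambda^0$; the first three are assumed in the corollary, so the only thing to verify is the injectivity-on-vertices condition for $\pi=\pi_{Q,T}$.

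First I would recall from the construction on page~\pageref{infinpathrep} that $\pi_{Q,T}(p_v)=Q_v$, where $Q_v$ is the endomorphism of $\FF_R(\Lambda^\infty)$ extending the map $f_v$ that sends a basis element $x\in\Lambda^\infty$ to $x$ if $r(x)=v$ and to $0$ otherwise. So $rQ_v$ sends $x\mapsto rx$ for $x\in v\Lambda^\infty$ and $x\mapsto 0$ otherwise. Next I would invoke the fact, noted just before Lemma~\ref{lemma1infinitepaths} and following from that lemma (since $\Lambda$ has no sources, every vertex receives paths of arbitrarily large degree, hence an infinite path), that $v\Lambda^\infty\neq\emptyset$ for every $v\in\Lambda^0$. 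Picking any $x\in v\Lambda^\infty$, the element $(rQ_v)(x)=rx$ is a nonzero element of the free module $\FF_R(\Lambda^\infty)$ because $r\neq 0$ and the $x$'s form a basis; hence $rQ_v\neq 0$, i.e.\ $\pi_{Q,T}(rp_v)\neq 0$.

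With that verified, Theorem~\ref{CKuniqueness} applies directly and gives that $\pi_{Q,T}$ is injective, which is the assertion of the corollary. There is really no obstacle here: the work was already done in establishing Theorem~\ref{CKuniqueness} and in the construction of the infinite-path representation, and this corollary is just the observation that the representation satisfies the hypothesis. The only mild subtlety worth spelling out is the nonemptiness of $v\Lambda^\infty$, which is why the ``no sources'' assumption is needed, and the fact that distinct infinite paths are linearly independent in $\FF_R(\Lambda^\infty)$ by definition of the free module, so that $rx\neq 0$ whenever $r\neq 0$.
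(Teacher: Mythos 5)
Your proof is correct and is exactly the paper's argument: the paper derives the corollary immediately from Theorem~\ref{CKuniqueness}, having already observed in the construction of the infinite-path representation that every vertex receives an infinite path, so $rQ_v\neq 0$ for $r\neq 0$. Your spelling out of the hypothesis check (nonemptiness of $v\Lambda^\infty$ and linear independence of basis elements in $\FF_R(\Lambda^\infty)$) is just a slightly more explicit version of the same reasoning.
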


We will see in Lemma~\ref{kernelofpathrep} below that $\pi_{Q,T}$ is not injective when $\Lambda$ is periodic.

\begin{remark}
The uniqueness theorem for Cuntz-Krieger algebras was proved in \cite{CK}, and extended to graph algebras in \cite{KPR} and higher-rank graph algebras in \cite{KP}. The first versions for Leavitt algebras were in \cite{AA1, RMalaga, T}. All require some form of aperiodicity condition. For graphs, everybody now uses the condition~(L) from \cite{KPR}, which says that every cycle has an entry. For row-finite higher-rank graphs without sources, all the formulations are equivalent to the finite-path formulation which we use here \cite[Lemma~3.2]{RS}. When there are sources or infinite receivers, one has to be a bit more careful, and we refer to \cite{LS} for a detailed discussion. 
\end{remark}

\section{Basic ideals and basic simplicity}\label{sectionbasicsimplicity}

Let $\L$ be a row-finite $k$-graph without sources; we continue to write $(p,s)$ for the universal Kumjian-Pask $\L$-family in $\KP_R(\L)$.

A subset $H$ of $\L^0$ is \emph{hereditary} if $\lambda\in\L$ and $r(\lambda)\in H$ imply $s(\lambda)\in H$. A subset $H$ is \emph{saturated} if $v\in \L^0$, $n\in \N^k$ and $s(v\L^n)\subset H $ imply $v\in H$. For a saturated hereditary subset $H$, we write $I_H$ for the ideal of $\KP_R(\L)$ generated by $\{p_v:v\in H\}$.

The standard path for studying graph algebras predicts that $H\mapsto I_H$ should be a bijection between the saturated hereditary subsets of $\L^0$ and the graded ideals of $\KP_R(\L)$. However, since we are allowing coefficients in a commutative ring, we have to follow \cite{T2} and restrict attention to the \emph{basic ideals}, which are the ideals $I$ such that $rp_v\in I$ and $r\in R\setminus\{0\}$ imply $p_v\in I$. This assumption gets us back on path:

\begin{theorem} \label{latticehersat} Let $\L$ be a row-finite $k$-graph without sources and let $R$ be a commutative ring with $1$. Then the map $H\mapsto I_H$ is a lattice isomorphism from the lattice of saturated hereditary subsets of $\L^0$ onto the lattice of basic and graded ideals of $\KP_R(\L)$.
\end{theorem}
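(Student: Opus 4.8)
The plan is to show that $H \mapsto I_H$ is a well-defined order-preserving map between the two lattices, to construct an inverse, and to verify that both composites are the identity. Order-preservation is immediate: if $H_1 \subseteq H_2$ then the generating set $\{p_v : v \in H_1\}$ of $I_{H_1}$ is contained in $I_{H_2}$, so $I_{H_1} \subseteq I_{H_2}$; and intersections/joins of saturated hereditary sets are again saturated hereditary, so both sides are genuine lattices. That $I_H$ is graded follows from the general fact (proved in the Background section) that an ideal generated by homogeneous elements is graded, since each $p_v$ is homogeneous of degree $0$. That $I_H$ is basic is the real content in this direction: I would prove that $rp_v \in I_H$ with $r \neq 0$ forces $v \in H$, by producing a quotient of $\KP_R(\L)$ in which the image of $I_H$ vanishes but $r\,(\text{image of } p_v) \neq 0$ for $v \notin H$ — concretely, one checks that $(p_u, s_\mu)_{u,\mu \in \L \setminus \L H}$ (the generators supported off $H$) satisfies the Kumjian-Pask relations for the quotient $k$-graph $\L \setminus \L H$ (row-finiteness and ``no sources'' here use that $H$ is saturated hereditary), invokes the universal property to get a homomorphism $\KP_R(\L) \to \KP_R(\L \setminus \L H)$ killing $I_H$, and then uses the last clause of Theorem~\ref{$KP_R$}, namely $rp_w \neq 0$ for $r \neq 0$, applied in $\KP_R(\L \setminus \L H)$.

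For the inverse I would send a basic graded ideal $I$ to $H_I := \{v \in \L^0 : p_v \in I\}$. The first task is to check $H_I$ is saturated and hereditary. Hereditariness: if $r(\lambda) \in H_I$ then $p_{s(\lambda)} = s_{\lambda^*} p_{r(\lambda)} s_\lambda \in I$ by (KP2) and (KP3), so $s(\lambda) \in H_I$. Saturation: if $s(v\L^n) \subseteq H_I$ then $p_v = \sum_{\lambda \in v\L^n} s_\lambda s_{\lambda^*} = \sum_{\lambda \in v\L^n} s_\lambda p_{s(\lambda)} s_{\lambda^*} \in I$ by (KP4), so $v \in H_I$; here row-finiteness makes the sum finite. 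One then needs $I_{H_I} \subseteq I$, which is clear since the generators $p_v$ ($v \in H_I$) lie in $I$ by definition of $H_I$.

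The heart of the argument is the reverse inclusion $I \subseteq I_{H_I}$, equivalently injectivity of the induced map $\KP_R(\L)/I_{H_I} \to \KP_R(\L)/I$. Here the strategy is to apply the graded-uniqueness theorem (Theorem~\ref{gradeduniquess}). Both quotients are $\Z^k$-graded since $I$ and $I_{H_I}$ are graded; the quotient map $q : \KP_R(\L) \to \KP_R(\L)/I_{H_I}$ factors through $\KP_R(\L \setminus \L H_I)$ and the images $(\bar p, \bar s)$ form a Kumjian-Pask family for $\L \setminus \L H_I$, so one may regard $\KP_R(\L)/I_{H_I}$ as (a quotient of) $\KP_R(\L \setminus \L H_I)$; applying the last clause of Theorem~\ref{$KP_R$} shows $r\bar p_v \neq 0$ in $\KP_R(\L)/I_{H_I}$ for $v \notin H_I$. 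To invoke graded-uniqueness for the map $\KP_R(\L)/I_{H_I} \to \KP_R(\L)/I$ I must check its composite with the generators is nonzero on every $r\bar p_v$ with $r \neq 0$: for $v \in H_I$ we have $\bar p_v = 0$ already in $\KP_R(\L)/I_{H_I}$ (so there is nothing to check), and for $v \notin H_I$, $p_v \notin I$ because $v \notin H_I$ and, crucially, $rp_v \notin I$ for $r \neq 0$ because $I$ is \emph{basic} — this is exactly where the basic hypothesis is needed. Graded-uniqueness then gives injectivity, hence $I \subseteq I_{H_I}$.

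\textbf{Main obstacle.} The step I expect to require the most care is the passage between $\KP_R(\L)$ and $\KP_R(\L \setminus \L H)$: one must verify that $\L \setminus \L H$ is again a row-finite $k$-graph without sources (using saturation and heredity of $H$), that the ``off-$H$'' generators of $\KP_R(\L)$ satisfy the Kumjian-Pask relations of the smaller graph (the subtlety being relation (KP4), where the sum over $v\L^n$ in $\L$ must match the sum over $v(\L\setminus\L H)^n$ — the discarded terms $s_\lambda s_{\lambda^*}$ with $s(\lambda) \in H$ map to $0$ precisely because $p_{s(\lambda)} \in I_H$), and that the resulting homomorphism $\KP_R(\L) \to \KP_R(\L \setminus \L H)$ has kernel exactly $I_H$. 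The cleanest route may be to first prove $\KP_R(\L)/I_H \cong \KP_R(\L \setminus \L H)$ as a standalone lemma, since it is used in both directions, and then the rest of the theorem assembles quickly from the two uniqueness-flavoured arguments above. Finally, that the two composites $H \mapsto H_{I_H}$ and $I \mapsto I_{H_I}$ are identities is then immediate from $H_{I_H} = H$ (one inclusion is the basic-ness just proved, the other is trivial) and $I_{H_I} = I$ (both inclusions established above), and that the bijection is a lattice isomorphism follows from order-preservation in both directions.
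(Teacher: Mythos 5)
Your overall architecture is the paper's: you prove that $H_I$ is saturated and hereditary by the same (KP2)--(KP4) computations (the paper's Lemma~\ref{hersatboth}), you pass to the quotient graph $\L\setminus H$ (your $\L\setminus\L H$), you identify $\KP_R(\L)/I_{H_I}$ with $\KP_R(\L\setminus H_I)$, and you obtain $I\subseteq I_{H_I}$ by applying the graded-uniqueness theorem to the composite $\KP_R(\L\setminus H_I)\to\KP_R(\L)/I_{H_I}\to\KP_R(\L)/I$, with the \emph{basic} hypothesis supplying exactly the non-vanishing of the elements $r(p_v+I)$; this is precisely how the paper assembles the proof from Lemmas~\ref{hersatboth} and \ref{manyconditions} and Proposition~\ref{quotientKP}. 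Note that for this direction you only need the map $\KP_R(\L\setminus H_I)\to\KP_R(\L)/I_{H_I}$ to be surjective, so your check that the off-$H_I$ generators satisfy the $(\L\setminus H_I)$-relations modulo $I_{H_I}$ (the discarded (KP4) terms dying because $p_{s(\lambda)}\in I_{H_I}$) is enough there.

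The one step that does not work as you have written it is the proof that $I_H$ is basic (equivalently that $v\notin H$ and $r\neq 0$ imply $rp_v\notin I_H$, which also gives $H_{I_H}=H$). Checking that the generators supported off $H$ satisfy, modulo $I_H$, the Kumjian-Pask relations of $\L\setminus H$ invokes the universal property of $\KP_R(\L\setminus H)$ and therefore produces a homomorphism $\KP_R(\L\setminus H)\to\KP_R(\L)/I_H$ --- the opposite direction from the homomorphism $\KP_R(\L)\to\KP_R(\L\setminus H)$ that you then want to use. Non-vanishing of $rq_v$ in $\KP_R(\L\setminus H)$ does not transfer forward along that map unless you already know it is injective, and proving injectivity by graded-uniqueness would require $r(p_v+I_H)\neq 0$, i.e.\ exactly the basicness being proved, so the argument is circular at this point. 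The paper's fix (Lemma~\ref{quotientgraph}) goes the other way: extend the universal $(\L\setminus H)$-family $(q,t)$ by zero on all generators with source in $H$ and check that this is a Kumjian-Pask \emph{$\L$}-family inside $\KP_R(\L\setminus H)$ --- the delicate instance of (KP4) is at $v\in H$, where heredity forces $s(\lambda)\in H$ for every $\lambda\in v\L^n$, so all terms vanish and match $P_v=0$ --- and then the universal property of $\KP_R(\L)$ gives $\pi:\KP_R(\L)\to\KP_R(\L\setminus H)$ which kills $I_H$ and satisfies $\pi(rp_v)=rq_v\neq 0$ for $v\notin H$ by the last clause of Theorem~\ref{$KP_R$}. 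With that substitution your plan coincides with the paper's proof, and it also yields the standalone isomorphism $\KP_R(\L)/I_H\cong\KP_R(\L\setminus H)$ you propose, since the induced map on the quotient is a two-sided inverse of your surjection.
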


The proof of Theorem~\ref{latticehersat} follows the general path first taken in \cite[\S4]{BPRS}. The first lemma is a little more general than we need right now, but the sets $H_{I,r}$ will be of interest in \S\ref{sectionsimple}.

\begin{lemma}\label{hersatboth} Let $I$ be an ideal of $\KP_R(\L)$ and $r\in R$. Then $H_{I,r}:=\{v\in\L^0:rp_v\in I\}$  is a saturated hereditary subset of $\L^0$.  In particular, $H_I:=H_{I,1}=\{v\in\L^0:p_v\in I\}$ is saturated and hereditary.
\end{lemma}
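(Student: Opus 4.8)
The plan is to verify the two closure properties directly from the definitions, using the Kumjian-Pask relations to produce, for each candidate vertex, an element of $I$ witnessing membership in $H_{I,r}$.

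First I would check that $H_{I,r}$ is hereditary. Suppose $v\in H_{I,r}$, so $rp_v\in I$, and let $\lambda\in\L$ with $r(\lambda)=v$; we want $s(\lambda)\in H_{I,r}$, i.e. $rp_{s(\lambda)}\in I$. If $d(\lambda)=0$ then $\lambda=v=s(\lambda)$ and there is nothing to prove, so assume $\lambda\in\L^{\neq 0}$. Using (KP2) and (KP3), $s_{\lambda^*}(rp_v)s_\lambda = r\,s_{\lambda^*}p_{r(\lambda)}s_\lambda = r\,s_{\lambda^*}s_\lambda = rp_{s(\lambda)}$, and since $I$ is a two-sided ideal and $rp_v\in I$, the left-hand side lies in $I$. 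Hence $rp_{s(\lambda)}\in I$, as required.

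Next I would check that $H_{I,r}$ is saturated. Suppose $v\in\L^0$ and $n\in\N^k$ satisfy $s(v\L^n)\subset H_{I,r}$; we want $rp_v\in I$. If $n=0$ then $s(v\L^0)=\{v\}$, so $v\in H_{I,r}$ directly. If $n\neq 0$, apply (KP4): $rp_v = \sum_{\lambda\in v\L^n} r\,s_\lambda s_{\lambda^*}$. For each $\lambda\in v\L^n$ we have $s(\lambda)\in H_{I,r}$, so $rp_{s(\lambda)}\in I$, and then $r\,s_\lambda s_{\lambda^*} = s_\lambda(rp_{s(\lambda)})s_{\lambda^*}\in I$ because $I$ is an ideal (here I use $s_\lambda p_{s(\lambda)} = s_\lambda$ from (KP2), so $s_\lambda(rp_{s(\lambda)})s_{\lambda^*} = r\,s_\lambda s_{\lambda^*}$). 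Since $v\L^n$ is finite by row-finiteness, $rp_v$ is a finite sum of elements of $I$, hence $rp_v\in I$ and $v\in H_{I,r}$.

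The final sentence about $H_I=H_{I,1}$ is then immediate by taking $r=1$. I do not anticipate any serious obstacle here: both steps are short manipulations with the relations (KP2)–(KP4), and the only place row-finiteness is genuinely used is in the saturation argument, where we need the defining sum in (KP4) to be finite so that it stays inside the ideal. The one small point to be careful about is the degenerate cases $d(\lambda)=0$ and $n=0$, where the ghost-path and (KP4) manipulations are not available and one instead reads off membership directly; I would mention these explicitly to keep the argument clean.
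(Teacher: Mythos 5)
Your proof is correct and follows essentially the same route as the paper: hereditariness via $rp_{s(\lambda)}=s_{\lambda^*}(rp_{r(\lambda)})s_\lambda\in I$ using (KP2)--(KP3), and saturation by rewriting $rp_v=\sum_{\lambda\in v\L^n}s_\lambda(rp_{s(\lambda)})s_{\lambda^*}$ with (KP4). Your explicit treatment of the degenerate cases $d(\lambda)=0$ and $n=0$ is a harmless extra care (the paper's conventions $s_v=p_v$ make these cases automatic).
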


\begin{proof} To see that $H_{I,r}$ is hereditary, suppose $\lambda \in \L$ and $r(\lambda)\in H_{I,r}$. Then  $rp_{r(\lambda)}\in I$ and $rs_\lambda=rp_{r(\lambda)}s_\lambda\in I$.  Now 
$rp_{s(\lambda)}=rs_{\lambda^*}s_{\lambda} = s_{\lambda^*}r s_{\lambda}\in I$. Thus $s(\lambda)\in H_{I,r}$, and $H_{I,r}$ is hereditary. To see that $H_{I,r}$ is saturated, fix $v\in \L^0$ and $n\in \N^k$, and suppose that 
$s(\lambda)\in H_{I,r}$ for all $\lambda\in v\L^n$.  Then $rp_{s(\lambda)}\in I$ for all $\lambda\in v\L^n$, and (KP4) gives
\[rp_v=\sum_{\lambda\in v\L^n }rs_\lambda s_{\lambda^*}=
\sum_{\lambda\in v\L^n }s_\lambda (rp_{s(\lambda)})s_{\lambda^*}\in I.
\] 
Thus $v\in H_{I,r}$, and $H_{I,r}$ is saturated.
\end{proof}

\begin{lemma} \label{quotientgraph} Suppose $\Lambda$ is a row-finite $k$-graph without sources and $H$ is a saturated hereditary subset of $\L^0$. Then  $\L\setminus H:=(\L^0\setminus H, s^{-1}(\L^0\setminus H),  r, s)$
is a row-finite $k$-graph without sources, and if $(Q,T)$ is a Kumjian-Pask family for $\L\setminus H$ in an $R$-algebra $A$, then
\[
P_v=\begin{cases}
Q_v & \text{if }v\not\in H\\
0 & \text{otherwise,}
\end{cases} \quad
S_\l=\begin{cases}
T_\l & \text{if }s(\l)\not\in H\\
0 & \text{otherwise,}
\end{cases} \quad\text{ and}\quad
S_{\mu^*}= \begin{cases}
T_{\mu^*}& \text{if }s(\mu)\not\in H\\
0& \text{otherwise}
\end{cases}
\]
form a Kumjian-Pask $\L$-family $(P,S)$ in $A$.
\end{lemma}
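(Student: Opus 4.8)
The plan is to verify everything directly from the definitions; the single fact that makes all the cases go through is the contrapositive form of hereditariness, namely $s(\lambda)\notin H\Longrightarrow r(\lambda)\notin H$ for $\lambda\in\L$, used together with the identities $s(\lambda\mu)=s(\mu)$ and $r(\lambda\mu)=r(\lambda)$.

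First I would check that $\L\setminus H$ is a row-finite $k$-graph without sources. Because $H$ is hereditary, a morphism $\lambda$ with $s(\lambda)\notin H$ also has $r(\lambda)\notin H$, so the morphism set $s^{-1}(\L^0\setminus H)$ is closed under $r$, $s$ and composition (if $s(\lambda)=r(\mu)$ and $s(\mu)\notin H$ then $s(\lambda\mu)=s(\mu)\notin H$ and $s(\lambda)=r(\mu)\notin H$) and contains $\iota_v$ for every $v\notin H$; hence $\L\setminus H$ is a subcategory of $\L$, and it is countable since $\L$ is. With the restricted degree functor it inherits the factorization property: if $\lambda\in\L\setminus H$ and $d(\lambda)=m+n$, the unique factorization $\lambda=\mu\nu$ in $\L$ satisfies $s(\nu)=s(\lambda)\notin H$, hence $s(\mu)=r(\nu)\notin H$, so $\mu,\nu\in\L\setminus H$, and uniqueness is inherited from $\L$. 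Row-finiteness is immediate since $v(\L\setminus H)^n\subseteq v\L^n$. For the absence of sources, fix $v\notin H$ and $n\in\N^k$: since $\L$ has no sources, $v\L^n\neq\emptyset$, and if every $\lambda\in v\L^n$ had $s(\lambda)\in H$ then $s(v\L^n)\subseteq H$ and saturation would force $v\in H$, a contradiction; so $v(\L\setminus H)^n\neq\emptyset$.

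Next I would verify (KP1)--(KP4) for $(P,S)$, in each relation splitting into the case where some path appearing has source in $H$ and the case where none does. In the first case the corresponding generator $S_\lambda$ or $S_{\mu^*}$ is $0$ (and $P_v=0$ when $v\in H$), so every monomial in the relation involving it vanishes, and one checks the other side vanishes too, propagating membership of $H$ along paths via $r(\lambda)\in H\Rightarrow s(\lambda)\in H$ and $s(\lambda\mu)=s(\mu)$; for instance, in (KP4) at $v\in H$ every $\lambda\in v\L^n$ has $s(\lambda)\in H$, so both sides are $0$. In the second case all paths appearing lie in $\L\setminus H$, every $P$ and $S$ equals the corresponding $Q$ and $T$, and the relation is exactly the same relation for $(Q,T)$; e.g. in (KP4) at $v\notin H$ the terms with $s(\lambda)\in H$ drop out, leaving $\sum_{\lambda\in v(\L\setminus H)^n}T_\lambda T_{\lambda^*}=Q_v=P_v$. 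The starred relations $S_{\mu^*}S_{\lambda^*}=S_{(\lambda\mu)^*}$ and $P_{s(\lambda)}S_{\lambda^*}=S_{\lambda^*}=S_{\lambda^*}P_{r(\lambda)}$ are handled symmetrically to their unstarred counterparts.

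There is no real obstacle, only bookkeeping: one must place each appeal to hereditariness (to pass from $s$ to $r$, or from $r$ to $s$) and the single appeal to saturation at the right point, and note in passing that the degenerate case $H=\L^0$, for which $\L\setminus H$ is the empty $k$-graph and $(P,S)$ is identically zero, is covered vacuously.
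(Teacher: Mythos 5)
Your proof is correct and follows essentially the same route as the paper's: hereditariness (in contrapositive form) gives that $\L\setminus H$ is closed under composition and factorization, saturation gives the absence of sources, and the Kumjian--Pask relations are checked case-by-case, with (KP4) handled by observing that the nonzero terms are indexed by $\{\l\in v\L^n: s(\l)\notin H\}$, which is empty for $v\in H$ and equals $v(\L\setminus H)^n$ otherwise. Your write-up just spells out more of the bookkeeping that the paper calls straightforward.
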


\begin{proof}
It is straightforward to check that $\L\setminus H$ is a subcategory of $\L$, and the hereditariness of $H$ implies that if $\lambda\in\L\setminus H$ and $\lambda=\mu\nu$, then the factors $\mu$ and $\nu$ have source in $\L^0\setminus H$ (see \cite[Theorem 5.2(b)]{RSY}). So $\Lambda\setminus H$ is a row-finite $k$-graph. To see that  $\Lambda\setminus H$ has no sources, suppose that $v\in (\L\setminus H)^0=\Lambda^0\setminus H$ and $n\in \N^k$. Since $\L$ has no sources, $v\L^n$ is nonempty, and  if $s(\lambda)\in H$ for every $\l\in v\L^n$, then $v\in H$ because $H$ is saturated, which contradicts $v\in\Lambda^0\setminus H$. Thus there must exist $\l\in v\L^n$ such that $s(\l)\in \Lambda^0\setminus H$, and then $\l\in v(\Lambda\setminus H)^n$, so $v$ is not a source in $\Lambda\setminus H$.

Most of the Kumjian-Pask relations (KP1--3) for $(P,S)$ follow immediately from those for $(Q,T)$, though we have to use that $H$ is hereditary to see that $s(\l)\notin H$ implies $r(\l)\notin H$, so that $S_\l=T_\l=Q_{r(\l)}T_\l=P_{r(\l)}S_\l$ in (KP2). For (KP4), we observe that the nonzero terms in $\sum_{\l\in v\L^n} S_\l S_{\l^*}$ are parametrized by
\[
\{\l\in v\L^n :s(\l)\not\in H \}= \begin{cases}
 \emptyset & \text{if }v\in H\\
v(\L\setminus H)^n& \text{if }v\notin H.\qedhere 
\end{cases} 
\]
\end{proof}

Recall that an ideal $I$ is \emph{idempotent} if
$I=I^2$ in the sense that $I$ is spanned by products $ab$ with $a,b\in I$.

\begin{lemma}\label{manyconditions} Let $H$ be a saturated hereditary subset of $\L^0$. Then 
\begin{equation}\label{formforIH}
I_H=\lsp_R\{s_\sigma s_{\l^*}:s(\sigma)=s(\l)\in H\},
\end{equation}
$I_H$ is a basic, graded and idempotent ideal of $\KP_R(\L)$, and 
$H_{I_H}=H$.
\end{lemma}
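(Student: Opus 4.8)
The plan is to establish the four assertions of Lemma~\ref{manyconditions} in the order \eqref{formforIH}, then basic/graded/idempotent, then $H_{I_H}=H$, reusing the computational lemmas already proved.

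\textbf{Step 1: the formula \eqref{formforIH}.} Write $J:=\lsp_R\{s_\sigma s_{\l^*}:s(\sigma)=s(\l)\in H\}$. Since each generator $p_v$ with $v\in H$ is of the form $s_v s_{v^*}$ with $s(v)=v\in H$, we have $\{p_v:v\in H\}\subset J$, so it suffices to show $J$ is a two-sided ideal: then $I_H\subset J$, and conversely each $s_\sigma s_{\l^*}$ with $s(\l)\in H$ equals $s_\sigma p_{s(\l)}s_{\l^*}\in I_H$, giving $J\subset I_H$. To see $J$ is an ideal, it is enough (by linearity and Theorem~\ref{$KP_R$}, which says $\KP_R(\L)$ is spanned by monomials $s_\mu s_{\nu^*}$) to check that $s_\mu s_{\nu^*}\cdot s_\sigma s_{\l^*}$ and $s_\sigma s_{\l^*}\cdot s_\mu s_{\nu^*}$ lie in $J$ whenever $s(\sigma)=s(\l)\in H$. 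For the right multiplication, $s_{\l^*}s_\mu$ simplifies via Lemma~\ref{pathprods} to a sum of terms $s_\alpha s_{\beta^*}$ with $\l\alpha=\mu\beta$; here $s(\alpha)=s(\l\alpha)\in H$ because $H$ is hereditary and $r(\alpha)$ lies on a path from $s(\l)\in H$ --- more precisely $s(\sigma\alpha)=s(\alpha)\in H$ --- so $s_\sigma s_{\l^*}s_\mu s_{\nu^*}$ is a sum of $s_{\sigma\alpha}s_{(\nu\beta)^*}$ with $s(\sigma\alpha)=s(\nu\beta)\in H$, hence in $J$. The left multiplication is symmetric: $s_\mu s_{\nu^*}s_\sigma s_{\l^*}$ reduces by Lemma~\ref{pathprods} to a sum of $s_{\mu\alpha}s_{(\l\beta)^*}$ with $\nu\alpha=\sigma\beta$, and $s((\l\beta))=s(\beta)=s(\sigma\beta)\in H$ since $s(\sigma)\in H$ and $H$ is hereditary. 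Thus $J=I_H$.

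\textbf{Step 2: basic, graded, idempotent.} Graded: each spanning monomial $s_\sigma s_{\l^*}$ is homogeneous of degree $d(\sigma)-d(\l)$ by \eqref{spanning}, so $I_H$ is spanned by homogeneous elements and is therefore a graded ideal by the criterion in the ``Graded rings'' subsection. Idempotent: for $v\in H$ we have $p_v=p_v p_v\in I_H^2$, and since the $p_v$ ($v\in H$) generate $I_H$ and $I_H=J$ is spanned by the $s_\sigma s_{\l^*}=s_\sigma p_{s(\l)}s_{\l^*}=s_\sigma p_{s(\l)}\cdot p_{s(\l)}s_{\l^*}$, each spanning element is a product of two elements of $I_H$; hence $I_H=I_H^2$. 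Basic: suppose $rp_v\in I_H$ with $r\in R\setminus\{0\}$; I must show $v\in H$. This is where I would invoke the graded-uniqueness theorem applied to the quotient. By Lemma~\ref{quotientgraph} the map $(p,s)\mapsto (P,S)$ (with $P_w=0$ for $w\in H$ etc.) extends to a graded homomorphism $\pi:\KP_R(\L)\to \KP_R(\L\setminus H)$ sending $p_w\mapsto q_w$ for $w\notin H$ and $p_w\mapsto 0$ for $w\in H$; it visibly kills every generator of $I_H$, so it factors through $\KP_R(\L)/I_H$. If $v\in H$ there is nothing to prove, so suppose $v\notin H$; then $\pi(rp_v)=rq_v$, which is nonzero in $\KP_R(\L\setminus H)$ by the last clause of Theorem~\ref{$KP_R$} applied to the $k$-graph $\L\setminus H$. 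Since $rp_v\in I_H\subset\ker\pi$, this forces $v\in H$, a contradiction; hence $v\in H$ and $I_H$ is basic.

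\textbf{Step 3: $H_{I_H}=H$.} By definition $H_{I_H}=\{v\in\L^0:p_v\in I_H\}$. Clearly $H\subset H_{I_H}$. Conversely, if $p_v\in I_H$ then $1\cdot p_v\in I_H$ with $1\neq 0$ in $R$, so basicness (Step 2) gives $v\in H$; equivalently, apply $\pi$ as above to get $q_v=0$ if $v\notin H$, contradicting $q_v\neq 0$. Hence $H_{I_H}=H$.

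\textbf{Main obstacle.} The routine part is Step 1 (closure under multiplication via Lemma~\ref{pathprods}); the genuinely substantive point is the basicness in Step 2, which cannot be proved by bare manipulation of spanning elements and instead needs a faithful model of the quotient $\KP_R(\L)/I_H$ --- namely the identification with $\KP_R(\L\setminus H)$ through Lemma~\ref{quotientgraph} --- together with the non-triviality statement $rp_v\neq 0$ from Theorem~\ref{$KP_R$}. Everything else then follows formally, and $H_{I_H}=H$ is an immediate corollary of basicness.
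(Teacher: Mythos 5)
Your proposal is correct and follows essentially the same route as the paper: you establish \eqref{formforIH} by showing the span is an ideal via Lemma~\ref{pathprods} and hereditariness of $H$, read off gradedness and idempotence from the spanning monomials exactly as the paper does, and prove basicness together with $H_{I_H}=H$ by mapping to $\KP_R(\L\setminus H)$ through the family of Lemma~\ref{quotientgraph} and invoking $rq_v\neq 0$ from Theorem~\ref{$KP_R$}. No gaps; the only difference is cosmetic (you phrase the basicness step as a contradiction rather than directly proving $v\notin H\Rightarrow rp_v\notin I_H$).
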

\begin{proof}
Since $s_\sigma
s_{\l^*}=s_\sigma p_{s(\sigma)}s_{\l^*}$, the  right-hand side $J$ of \eqref{formforIH} is contained in $I_H$, and it contains all the generators $p_v$ (by the convention in Remark~\ref{1stconseq}). So to prove \eqref{formforIH}, it suffices for us to prove that $J$ is an ideal.
To see this, consider $s_\sigma s_{\l^*}$ with $s(\sigma)=s(\l)\in H$ and $s_\mu s_{\delta^*}\in \KP_R(\L)$. Applying Lemma \ref{pathprods} with $q=d(\l)\vee d(\mu)$ gives
\begin{equation}\label{monomial}
s_\sigma s_{\lambda^*}s_\mu s_{\delta^*}
 = \sum_{\{\alpha\in \L^{q-d(\lambda)},\;\beta\in\L^{q-d(\mu)}\;:\;\lambda\alpha = \mu\beta\}}
 s_{\sigma\alpha}s_{(\delta\beta)^*}.
\end{equation}
Since $H$ is hereditary, $r(\alpha)=s(\sigma)$ and $r(\beta)=s(\l)$ imply that $s(\alpha)$ and $s(\beta)$ are in $H$. Thus each nonzero summand in 
\eqref{monomial} belongs to $J$.  Similarly,  $s_\mu s_{\delta^*}s_\sigma s_{\lambda^*}\in J$. Thus $J$ is an ideal, and we have proved \eqref{formforIH}.

To see that $I_H$ is idempotent, we suppose that $s(\sigma)=s(\lambda)\in H$, and observe that the spanning element $s_\sigma s_{\lambda^*}=(s_\sigma p_{s(\sigma)})(p_{s(\sigma)}s_{\lambda^*})$ for $I_H$ belongs to $(I_H)^2$. Since \eqref{formforIH} shows that $I_H$ is spanned by homogeneous elements, $I_H$ is graded.

To see that $I_H$ is basic and that $H=H_{I_H}$, it suffices to fix $r\not=0$ in $R$, and  prove that $v\notin H$ implies $rp_v\notin I_H$. Now consider the universal Kumjian-Pask $(\L\setminus H)$-family $(q,t)$ in $\KP_R(\L\setminus H)$, and extend it to a Kumjian-Pask $\L$-family $(P,S)$ as in Lemma~\ref{quotientgraph}. The universal property of $\KP_R(\L)$ (see Theorem~\ref{$KP_R$}) gives a homomorphism $\pi:=\pi_{P,S}:\KP_R(\L)\to \KP_R(\L\setminus H)$. Since $\pi(p_w)=0$ for $w\in H$, $\pi$ vanishes on $I_H$. On the other hand, applying Theorem~\ref{$KP_R$} to $\L\setminus H$ shows that $\pi(rp_v)=rq_v\not=0$ for every $v\in \L^0\setminus H$. Thus $rp_v$ cannot be in $I_H\subset \ker \pi$.
\end{proof}

\begin{proposition}\label{quotientKP} Let $\Lambda$ be a row-finite $k$-graph without sources and $R$ a commutative ring with $1$. Let $I$ be a basic ideal of $\KP_R(\L)$, and let $(q,t)$ and $(p,s)$ be the  universal Kumjian-Pask families in $KP_R(\L\setminus H_I)$ and $\KP_R(\L)$, respectively. If $I$ is graded or $\L\setminus H_I$ is aperiodic, then there exists an isomorphism $\pi:\KP_R(\L\setminus H_I)\to \KP_R(\L)/I$ such
that
\begin{equation}\label{pi}
\pi(q_v)=p_v+I\; ,\; \pi(t_\l)=s_\l+I\text{ and }\; \pi(t_{\mu^*})=s_{\mu^*}+I
\end{equation}
for $v\in \L^0\setminus H_I$ and $\l,\mu\in \L^{\neq 0}\cap s^{-1}(\Lambda^0\setminus H_I)$.
\end{proposition}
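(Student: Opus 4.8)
The strategy is the standard one from \cite[\S4]{BPRS}, adapted to the algebraic setting: build $\pi$ from the universal property of $\KP_R(\L\setminus H_I)$ applied to a Kumjian-Pask $(\L\setminus H_I)$-family living in $\KP_R(\L)/I$, and then use one of the two uniqueness theorems to see that $\pi$ is injective. The inverse will come, as in Lemma~\ref{manyconditions}, from Lemma~\ref{quotientgraph}.

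First I would produce the family in $\KP_R(\L)/I$. Write $H:=H_I$ and let $\rho:\KP_R(\L)\to\KP_R(\L)/I$ be the quotient map. For $v\in\L^0\setminus H$, $\l,\mu\in\L^{\neq0}\cap s^{-1}(\L^0\setminus H)$, set $Q_v:=\rho(p_v)=p_v+I$, $T_\l:=\rho(s_\l)$ and $T_{\mu^*}:=\rho(s_{\mu^*})$. One checks (KP1)--(KP3) for $(Q,T)$ directly by applying $\rho$ to the corresponding relations for $(p,s)$; the only point needing care is (KP4), where for $v\in\L^0\setminus H$ and $n\in\N^k\setminus\{0\}$ one has $p_v=\sum_{\l\in v\L^n}s_\l s_{\l^*}$ in $\KP_R(\L)$, and the terms indexed by $\l$ with $s(\l)\in H$ satisfy $s_\l s_{\l^*}=s_\l p_{s(\l)}s_{\l^*}\in I_H\subseteq I$ (using \eqref{formforIH} and $I_H\subseteq I$, which holds because $p_v\in I$ for all $v\in H$); hence applying $\rho$ kills exactly those terms and leaves $Q_v=\sum_{\l\in v(\L\setminus H)^n}T_\l T_{\l^*}$, which is (KP4) for $\L\setminus H$. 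By the universal property of $\KP_R(\L\setminus H)$ (Theorem~\ref{$KP_R$}) there is a unique $R$-algebra homomorphism $\pi:\KP_R(\L\setminus H)\to\KP_R(\L)/I$ satisfying \eqref{pi}; it is surjective because $\KP_R(\L)/I$ is generated by the images of $p_v,s_\l,s_{\mu^*}$ and, for $v\in H$, $\rho(p_v)=0$, while for $\l$ with $s(\l)\in H$, $\rho(s_\l)=\rho(s_\l p_{s(\l)})=0$ and similarly $\rho(s_{\mu^*})=0$, so only the generators with source off $H$ survive.

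For injectivity I would apply a uniqueness theorem to $\pi$. Either hypothesis gives us what we need: if $\L\setminus H$ is aperiodic we use the Cuntz-Krieger uniqueness theorem (Theorem~\ref{CKuniqueness}), and if $I$ is graded we use the graded-uniqueness theorem (Theorem~\ref{gradeduniquess}), after noting that $\KP_R(\L)/I$ is $\Z^k$-graded (since $I$ is graded) and that $\pi$ is a graded homomorphism (the generators $q_v,t_\l,t_{\mu^*}$ have the same degrees as $p_v,s_\l,s_{\mu^*}$, and $\rho$ is graded). In both cases the hypothesis to verify is that $\pi(rq_v)\neq0$ for all $r\in R\setminus\{0\}$ and $v\in\L^0\setminus H$, i.e.\ that $rp_v\notin I$ for such $r$ and $v$. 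This is exactly the main obstacle, and it is where the hypothesis that $I$ is \emph{basic} enters: if $rp_v\in I$ then $v\in H_{I,r}\subseteq H_I=H$ by Lemma~\ref{hersatboth} together with the basic-ness of $I$ (which forces $H_{I,r}\subseteq H_{I,1}$, since $rp_v\in I$ implies $p_v\in I$), contradicting $v\notin H$. Hence $\pi$ is injective, and being also surjective it is the desired isomorphism. (Alternatively, one constructs the inverse explicitly: extend $(p+I)$-images back along Lemma~\ref{quotientgraph} to get a Kumjian-Pask $\L$-family in $\KP_R(\L\setminus H)$, obtain a homomorphism $\KP_R(\L)\to\KP_R(\L\setminus H)$ vanishing on $I$ hence factoring through $\KP_R(\L)/I$, and check it inverts $\pi$ on generators.)

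The genuinely delicate step is the verification of the non-vanishing hypothesis $rp_v\notin I$ for $v\notin H_I$, which relies essentially on $I$ being basic; everything else is a routine transport of the Kumjian-Pask relations across the quotient map together with an invocation of the already-proven uniqueness theorems. One should also be a touch careful, in the aperiodic case, that aperiodicity of $\L\setminus H$ is genuinely needed as a hypothesis (it is not automatic), which is why the proposition states it as an alternative to $I$ being graded.
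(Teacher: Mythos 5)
Your proposal is correct and follows essentially the same route as the paper: obtain $\pi$ from the universal property applied to the Kumjian-Pask $(\L\setminus H_I)$-family $(p+I,s+I)$, get surjectivity because the remaining generators lie in $I$, use basicness of $I$ to see $\pi(rq_v)=rp_v+I\neq 0$, and then invoke the Cuntz-Krieger or graded-uniqueness theorem according to which hypothesis holds. Your extra verification of (KP4) in the quotient is the content of Lemma~\ref{quotientgraph}/the paper's brief ``observe'' step, so nothing is missing.
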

\begin{proof}
Observe that $\{p_v+I,s_\l+I,s_{\mu^*}+I\}$ 
is a Kumjian-Pask $(\L\setminus H_I)$-family $(p+I,s+I)$, and the universal property of $\KP_R(\L\setminus H_I)$ (Theorem~\ref{$KP_R$}) gives a homomorphism $\pi:=\pi_{p+I,s+I}$ satisfying \eqref{pi}. Since the other generators of $\KP_R(\Lambda)$ belong to $I$, the family $(p+I,s+I)$ generates $\KP_R(\L)/I$, and $\pi$ is surjective.

Suppose that $\pi(rq_v)=0$ for some $r\in R\setminus\{0\}$ and $v\not\in H_I$. Then
$rp_v+I=\pi(rq_v)=0$, so that $rp_v\in I$ and, since $I$ is basic, $p_v\in I$ as well. But this implies that
$v\in H_I$, a contradiction. Thus $\pi(rq_v)\neq 0$ for all $r\in R\setminus\{0\}$ and $v\not\in H_I$. If $\Lambda\setminus H_I$ is aperiodic, then the Cuntz-Krieger uniqueness theorem implies that $\pi$ is injective.

If $I$ is graded, then $\KP_R(\L)/I$ is graded by $(\KP_R(\L)/I)_n=q(\KP_R(\L)_n)$, where $q: \KP_R(\L)\to \KP_R(\L)/I$ is the quotient map. If $\alpha, \beta\in (\L\setminus
H_I)$ with $d(\alpha)-d(\beta)=n\in \Z^k$, then
\[\pi(t_\alpha t_{\beta^*})=s_\alpha s_{\beta^*}+I=q(s_\alpha s_{\beta^*})\in q(\KP_R(\L)_n)=(\KP_R(\L)/I)_n.\]
Thus $\pi$ is graded, and the graded-uniqueness theorem implies that $\pi$ is injective.  
\end{proof}

\begin{proof}[Proof of Theorem~\ref{latticehersat}] To see that $H\mapsto I_H$ is surjective, 
let $I$ be a  basic graded ideal. Then $H_I=\{v\in \L^0 : p_v\in I\}$ is
saturated  and hereditary by Lemma~\ref{hersatboth}. We will show that $I=I_{H_I}$. Since all the generators of $I_{H_I}$ lie in $I$, we have $I_{H_I}\subset I$. Consider the quotient map $Q:\KP_R(\L)/I_{H_I}\to \KP_R(\L)/I$. Since $H_{I_H}=H$ by Lemma \ref{manyconditions}, Proposition~\ref{quotientKP} gives us an isomorphism $\pi:\KP_R(\L\setminus H_I)\to \KP_R(\L)/I_{H_I}$. Now suppose $v$ belongs to $\L^0\setminus H_I$ and $r\not=0$. The composition $Q\circ \pi$ satisfies $Q\circ \pi(rp_v)=rp_v+I$, and since $I$ is basic
\[
Q\circ \pi(rp_v)=0\Longrightarrow  rp_v\in I\Longrightarrow p_v\in I\Longrightarrow v\in H_I
\]
which contradicts the choice of $v$. So $Q\circ \pi(rp_v)\not=0$, and it follows from the graded-uniqueness theorem (Theorem~\ref{gradeduniquess}) that $Q\circ\pi$ is injective. Thus $Q$ is injective, and $I=I_{H_I}$.

Injectivity of $H\mapsto I_H$ follows from Lemma~\ref{manyconditions}. Finally, since $H\subset K$ if and only if $I_H\subset I_K$, the map $H\mapsto I_H$ preserves least upper bounds and greatest lower bounds, and hence is a lattice isomorphism.
\end{proof}

The hypothesis that ``every $\L\setminus H$ is aperiodic'' in the next theorem is the analogue for $k$-graphs of Condition (K) for directed graphs.

\begin{theorem}\label{allbasicgraded}
Let $\L$ be a row-finite $k$-graph without sources and let $R$ be a commutative ring with $1$.  Then every basic ideal of $\KP_R(\L)$ is graded if and only if $\L\setminus H$ is aperiodic for every saturated hereditary subset $H$ of $\L^0$.
\end{theorem}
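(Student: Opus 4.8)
The plan is to prove the two implications separately, reusing Proposition~\ref{quotientKP} and Theorem~\ref{latticehersat} rather than reworking the uniqueness theorems. The ``if'' direction will run parallel to the surjectivity argument in the proof of Theorem~\ref{latticehersat}, with the Cuntz--Krieger uniqueness theorem (packaged inside Proposition~\ref{quotientKP}) playing the role of the graded-uniqueness theorem. The ``only if'' direction I would prove by contraposition: starting from a saturated hereditary $H$ with $\L\setminus H$ periodic, I would produce a basic, non-graded ideal of $\KP_R(\L)$ by first finding one in $\KP_R(\L\setminus H)$ and then pulling it back.

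For the ``if'' direction, assume $\L\setminus H$ is aperiodic for every saturated hereditary $H\subseteq\L^0$, and let $I$ be a basic ideal of $\KP_R(\L)$. Put $H:=H_I=\{v\in\L^0:p_v\in I\}$, which is saturated and hereditary by Lemma~\ref{hersatboth}; then $I_H\subseteq I$, and by Lemma~\ref{manyconditions} $I_H$ is a basic graded ideal with $H_{I_H}=H$. Since $I_H$ is graded, Proposition~\ref{quotientKP} provides an isomorphism $\pi_0\colon\KP_R(\L\setminus H)\to\KP_R(\L)/I_H$ carrying vertex generators to cosets of vertex generators, and since $\L\setminus H$ is aperiodic it also provides an isomorphism $\pi_1\colon\KP_R(\L\setminus H)\to\KP_R(\L)/I$ of the same form. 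Then $\pi_1\circ\pi_0^{-1}$ sends $p_v+I_H$ to $p_v+I$ (and behaves correctly on $s_\lambda+I_H$ and $s_{\mu^*}+I_H$), so it agrees on generators with the canonical quotient map $Q\colon\KP_R(\L)/I_H\to\KP_R(\L)/I$. Hence $Q$ is an isomorphism, so $I=I_H$ is graded.

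For the ``only if'' direction, the first step is to observe that any periodic row-finite $k$-graph $B$ without sources has a basic non-graded ideal, namely the kernel $J$ of its infinite-path representation $\pi_{Q,T}\colon\KP_R(B)\to\End(\FF_R(B^\infty))$. Indeed $J\neq0$ by Lemma~\ref{kernelofpathrep}; $J$ is basic because $\pi_{Q,T}(rp_v)=rQ_v\neq0$ for every $v$ and every $r\in R\setminus\{0\}$ (each $v$ is the range of an infinite path by Lemma~\ref{lemma1infinitepaths}, and $Q_v$ fixes it), so $J$ contains no element of the form $rp_v$ with $r\neq0$ and the defining implication for ``basic'' holds vacuously; and $J$ is not graded, because a basic graded ideal has the form $I_{H'}$ with $H'=\{v:p_v\in I_{H'}\}$ by Theorem~\ref{latticehersat} and Lemma~\ref{manyconditions}, whereas here $\{v:p_v\in J\}=\{v:Q_v=0\}=\emptyset$ would force $J=I_\emptyset=\{0\}$. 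Now let $H$ be saturated hereditary with $B:=\L\setminus H$ periodic; then $B$ is a row-finite $k$-graph without sources by Lemma~\ref{quotientgraph}, so it has such a $J$. Since $I_H$ is graded, Proposition~\ref{quotientKP} identifies $\KP_R(\L)/I_H$ graded-isomorphically with $\KP_R(B)$ by a map carrying vertex generators to vertex generators; let $\psi\colon\KP_R(\L)\to\KP_R(B)$ be the resulting graded surjection, with kernel $I_H$. Set $\widetilde J:=\psi^{-1}(J)$. Then $I_H\subseteq\widetilde J$, and $\widetilde J$ is basic: if $rp_v\in\widetilde J$ with $r\neq0$ then either $v\in H$, so $p_v\in I_H\subseteq\widetilde J$, or $v\notin H$, in which case $\psi(p_v)$ is a vertex generator $q_v$ of $\KP_R(B)$ and $rq_v\in J$, whence $q_v\in J$ (as $J$ is basic) and $p_v\in\widetilde J$. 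Finally, if $\widetilde J$ were graded then, since $I_H\subseteq\widetilde J$ with $I_H$ graded, $\widetilde J/I_H$ would be a graded ideal of $\KP_R(\L)/I_H$, and transporting it along the graded isomorphism $\KP_R(\L)/I_H\cong\KP_R(B)$ would make $J$ graded --- a contradiction. So $\widetilde J$ is a basic, non-graded ideal of $\KP_R(\L)$, as required.

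The substantive point is the converse direction, and within it the input Lemma~\ref{kernelofpathrep} that periodicity makes the infinite-path representation non-injective; everything else is bookkeeping about how ``basic'' and ``graded'' pass through the quotient by $I_H$ and back. The ``if'' direction is routine once Proposition~\ref{quotientKP} is available.
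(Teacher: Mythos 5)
Your proof is correct and follows essentially the same route as the paper: for the converse you pull the kernel of the infinite-path representation of $\KP_R(\L\setminus H)$ (nonzero by Lemma~\ref{kernelofpathrep}, basic because it contains no $rq_v$) back through the quotient by $I_H$ and certify non-gradedness via Theorem~\ref{latticehersat}, and for the forward direction you use Proposition~\ref{quotientKP} to see that the canonical map $\KP_R(\L)/I_{H_I}\to\KP_R(\L)/I$ is injective, so $I=I_{H_I}$ is graded. The only cosmetic differences are that the paper invokes Proposition~\ref{aperiodicbasicideal} on the ideal transported to $\KP_R(\L\setminus H_I)$ instead of composing two applications of Proposition~\ref{quotientKP}, and in the converse it checks $H_J=H$ and uses injectivity of $H\mapsto I_H$ in $\KP_R(\L)$ rather than establishing non-gradedness already in $\KP_R(\L\setminus H)$ and transporting it along the graded isomorphism.
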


Theorem~\ref{allbasicgraded} and Theorem~\ref{latticehersat} together have the following corollary.

\begin{cor} Let $\L$ be a row-finite $k$-graph without sources and let $R$ be a commutative ring with $1$. Suppose that  $\L\setminus H$ is aperiodic for every saturated hereditary subset $H$ of $\L^0$. Then $H\mapsto I_H$ is an isomorphism of the lattice of saturated hereditary subsets of $\L^0$ onto the lattice of basic ideals in $\KP_R(\L)$.
\end{cor}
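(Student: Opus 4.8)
The corollary itself needs nothing new. Theorem~\ref{latticehersat} already gives that $H\mapsto I_H$ is a lattice isomorphism from the saturated hereditary subsets of $\L^0$ onto the basic \emph{and graded} ideals of $\KP_R(\L)$, and under the hypothesis of the corollary Theorem~\ref{allbasicgraded} says every basic ideal is graded; so the word ``graded'' may be dropped, the set of basic ideals coincides with the set of basic and graded ideals (in particular it is a lattice), and $H\mapsto I_H$ is a lattice isomorphism onto it. The substance is therefore entirely in Theorem~\ref{allbasicgraded}, and the plan below is for that.

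\emph{If every $\L\setminus H$ is aperiodic, then every basic ideal is graded.} Given a basic ideal $I$, Lemma~\ref{hersatboth} makes $H_I=\{v:p_v\in I\}$ saturated and hereditary, so $\L\setminus H_I$ is aperiodic by hypothesis; moreover $I_{H_I}\subseteq I$, $I_{H_I}$ is graded, and $H_{I_{H_I}}=H_I$ by Lemma~\ref{manyconditions}. I would now run the surjectivity argument from the proof of Theorem~\ref{latticehersat} with the Cuntz-Krieger uniqueness theorem in place of the graded one: Proposition~\ref{quotientKP} applied to the graded ideal $I_{H_I}$ gives an isomorphism $\pi\colon\KP_R(\L\setminus H_I)\to\KP_R(\L)/I_{H_I}$ with $\pi(q_v)=p_v+I_{H_I}$, and composing with the quotient map $Q\colon\KP_R(\L)/I_{H_I}\to\KP_R(\L)/I$ yields a ring homomorphism with $(Q\circ\pi)(rq_v)=rp_v+I$, which is nonzero for $r\neq 0$ and $v\notin H_I$ because $I$ is basic and $p_v\notin I$. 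Since $\L\setminus H_I$ is aperiodic, Theorem~\ref{CKuniqueness} forces $Q\circ\pi$, hence $Q$, to be injective, so $I=I_{H_I}$ is graded.

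\emph{Conversely, if some $\L\setminus H$ is periodic, then some basic ideal is not graded.} Since $I_H$ is graded (Lemma~\ref{manyconditions}), Proposition~\ref{quotientKP} gives a graded isomorphism $\KP_R(\L)/I_H\cong\KP_R(\L\setminus H)$, and a routine check (using that $I_H$ is basic and graded) shows that an ideal of $\KP_R(\L)$ containing $I_H$ is basic, respectively graded, exactly when its image in $\KP_R(\L\setminus H)$ is. So it suffices to produce a basic non-graded ideal of $\KP_R(\L\setminus H)$; replacing $\L$ by $\L\setminus H$, I may assume $\L$ is periodic and must do this in $\KP_R(\L)$. I would take $J=\ker\pi_{Q,T}$, where $\pi_{Q,T}\colon\KP_R(\L)\to\End(\FF_R(\L^\infty))$ is the infinite-path representation. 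Then $J$ is basic, since $\pi_{Q,T}(rp_v)=rQ_v\neq 0$ for $r\neq 0$ (the idempotent $Q_v$ is nonzero because $v$ receives an infinite path), and $J$ is not graded, since if it were, Theorem~\ref{latticehersat} would force $J=I_{H'}$ with $H'=\{v:Q_v=0\}=\emptyset$, i.e.\ $J=0$, contradicting $J\neq 0$. To see $J\neq 0$ in the periodic case I would unwind the finite-path formulation: periodicity gives $v\in\L^0$ and $m\neq n$ in $\N^k$ with $x(m,\infty)=x(n,\infty)$ for every $x\in v\L^\infty$; fixing $\lambda\in v\L^{m\vee n}$ and putting $\eta=\lambda(m,m\vee n)$ and $\zeta=\lambda(n,m\vee n)$, Lemma~\ref{lemma2infinitepaths} shows that $T_\eta$ and $T_\zeta$ act identically on every basis infinite path, whereas $s_\eta$ and $s_\zeta$ are nonzero of distinct degrees $(m\vee n)-m\neq(m\vee n)-n$, so $0\neq s_\eta-s_\zeta\in J$.

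I expect the main obstacle to be exactly this nonvanishing of $\ker\pi_{Q,T}$ in the periodic case — essentially a standalone fact about the infinite-path representation, and the one place where the finite-path aperiodicity condition does real work; checking that $T_\eta=T_\zeta$ on \emph{all} of $\L^\infty$ (not merely on the paths named in the periodicity hypothesis) and that $\eta$ and $\zeta$ genuinely have different degrees requires some careful unpacking of the composition and factorization in Lemma~\ref{lemma2infinitepaths}. By contrast, the ``if'' direction is just the proof of Theorem~\ref{latticehersat} with one uniqueness theorem swapped for the other, and the transfer of basic-ness and graded-ness through $\KP_R(\L)\to\KP_R(\L)/I_H$, together with the lattice bookkeeping needed for the corollary itself, is routine.
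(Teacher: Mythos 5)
Your treatment of the corollary itself is exactly the paper's: it is stated there as an immediate consequence of Theorems~\ref{latticehersat} and~\ref{allbasicgraded}, which is your first paragraph, so nothing is missing. Where you differ is in re-proving Theorem~\ref{allbasicgraded}, and both of your variations are correct. For ``every $\L\setminus H$ aperiodic implies every basic ideal graded'' you apply the Cuntz--Krieger uniqueness theorem directly to $Q\circ\pi$; the paper instead transfers $J/I_{H_J}$ to an ideal $L$ of $\KP_R(\L\setminus H_J)$ and invokes Proposition~\ref{aperiodicbasicideal}, whose proof is itself the Cuntz--Krieger theorem, so this is essentially the surjectivity argument of Theorem~\ref{latticehersat} with one uniqueness theorem swapped for the other. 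In the converse direction your kernel element is genuinely simpler: the paper (Lemmas~\ref{Lambdaperiodic} and~\ref{kernelofpathrep}) uses $s_{\mu\alpha}s_{(\mu\alpha)^*}-s_{\nu\alpha}s_{(\mu\alpha)^*}$, whereas you take $s_\eta-s_\zeta$ with $\eta=\lambda(m,m\vee n)$ and $\zeta=\lambda(n,m\vee n)$ for any $\lambda\in v\L^{m\vee n}$. This does work: $s(\eta)=s(\zeta)=s(\lambda)$, and for $y\in s(\lambda)\L^\infty$ the factorization of $x:=\lambda y$ gives $x(m,\infty)=\eta y$ and $x(n,\infty)=\zeta y$, so periodicity yields $T_\eta=T_\zeta$, while $s_\eta$ and $s_\zeta$ are nonzero homogeneous elements of the distinct degrees $(m\vee n)-m$ and $(m\vee n)-n$ (one possibly a vertex idempotent of degree $0$), whence $s_\eta-s_\zeta\neq 0$; your subsequent appeal to Theorem~\ref{latticehersat} to see $\ker\pi_{Q,T}$ is not graded is fine, and the transfer of basicness and gradedness across $\KP_R(\L)/I_H\cong\KP_R(\L\setminus H)$ is indeed routine, the graded half using that $I_H$ is graded together with uniqueness of homogeneous decompositions in the quotient. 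In short, the paper's route packages the aperiodic half into the reusable Proposition~\ref{aperiodicbasicideal}, while yours shortens the periodic half by avoiding the full strength of Lemma~\ref{Lambdaperiodic} and using a one-term difference of generators in place of the paper's difference of range idempotents.
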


To prove Theorem~\ref{allbasicgraded} we need some more results.  The next lemma is \cite[Lemma~3.3]{RS}; since the proof in \cite{RS} invokes results about a different formulation of periodicity, we give a direct proof.

\begin{lemma}\label{Lambdaperiodic}
Suppose that $\Lambda$ is periodic. Then there exist $v\in\L^0$ and $m\neq n\in\N^k$ such that, for all $\mu\in v\L^m$ and $\alpha\in s(\mu)\L^{(m\vee n)-m}$, there exists $\nu\in v\Lambda^n$ with
$\mu\alpha y=\nu\alpha y$ for all $y\in s(\alpha)\L^\infty$.
\end{lemma}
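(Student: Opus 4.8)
The plan is to keep the vertex $v\in\L^0$ and the pair $m\neq n\in\N^k$ produced by the failure of aperiodicity, so that
\[
\lambda(m,m+d(\lambda)-(m\vee n))=\lambda(n,n+d(\lambda)-(m\vee n))\quad\text{for \emph{every} }\lambda\in v\L\text{ with }d(\lambda)\geq m\vee n,
\]
and to use the same $v,m,n$ in the conclusion, with $p:=m\vee n$. The first, and main, step is to promote this finite-path identity to a statement about infinite paths: for every $x\in v\L^\infty$ one has $x(m,\infty)=x(n,\infty)$. To prove this I would fix $x\in v\L^\infty$, take any $N\in\N^k$ with $N\geq p$, and apply the periodicity identity to the finite path $x(0,N)\in v\L^N$; since (by functoriality) $x(0,N)(m,m+N-p)=x(m,m+N-p)=\big(x(m,\infty)\big)(0,N-p)$ and likewise with $n$ in place of $m$, this says $\big(x(m,\infty)\big)(0,N-p)=\big(x(n,\infty)\big)(0,N-p)$. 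Letting $N$ range over all elements $\geq p$, the initial segments of $x(m,\infty)$ and $x(n,\infty)$ agree in arbitrarily large degree, so $x(m,\infty)=x(n,\infty)$ by the uniqueness in Lemma~\ref{lemma1infinitepaths}.

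With that in hand, fix $\mu\in v\L^m$ and $\alpha\in s(\mu)\L^{p-m}$, so $\mu\alpha\in v\L^p$. Since $p\geq n$, the factorization property lets me write $\mu\alpha=\nu\beta$ with $\nu:=(\mu\alpha)(0,n)\in v\L^n$ and $\beta:=(\mu\alpha)(n,p)$; note $r(\beta)=s(\nu)$ and $s(\beta)=s(\mu\alpha)=s(\alpha)$, and that $\nu$ does not depend on any infinite path. This $\nu$ will be the required path. For a fixed $y\in s(\alpha)\L^\infty$ (which exists because $\L$ has no sources) put $x:=\mu\alpha y\in v\L^\infty$; Lemma~\ref{lemma2infinitepaths} gives $x(0,m)=\mu$, hence $x(m,\infty)=\alpha y$, and from the factorization $x=\nu(\beta y)$ it gives $x(0,n)=\nu$, hence $x(n,\infty)=\beta y$. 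By the first step $x(m,\infty)=x(n,\infty)$, so $\alpha y=\beta y$ for every $y\in s(\alpha)\L^\infty$, and then
\[
\nu\alpha y=\nu(\alpha y)=\nu(\beta y)=(\nu\beta)y=(\mu\alpha)y=\mu\alpha y .
\]

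The step I expect to be the real (if small) obstacle is making sense of $\nu\alpha$ at all: there is no formal reason that $\nu=(\mu\alpha)(0,n)$ should satisfy $s(\nu)=r(\alpha)$, so the product $\nu\alpha$ is a priori undefined, and this is precisely the content that periodicity must supply. It is recovered from the identity $\alpha y=\beta y$ by comparing ranges, which forces $s(\mu)=r(\alpha)=r(\beta)=s(\nu)$; once this source-matching is in place, the displayed chain of equalities is just associativity of composition together with the basic facts about composing and factoring infinite paths in Lemma~\ref{lemma2infinitepaths}. Everything else — the two restriction computations in the first step, and the identifications of $x(0,m),x(0,n),x(m,\infty),x(n,\infty)$ — is routine bookkeeping with Lemmas~\ref{lemma1infinitepaths} and~\ref{lemma2infinitepaths}.
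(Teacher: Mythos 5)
Your proposal is correct and follows essentially the same route as the paper: negate aperiodicity to get $v$ and $m\neq n$, promote the finite-path identity to $x(m,\infty)=x(n,\infty)$ for all $x\in v\L^\infty$ by applying it to the initial segments $x(0,N)$, and then take $\nu=(\mu\alpha)(0,n)$ and compute $\mu\alpha y=\nu\,(\mu\alpha y)(n,\infty)=\nu\,(\mu\alpha y)(m,\infty)=\nu\alpha y$. Your extra care about why $\nu\alpha$ is composable (via $\beta=(\mu\alpha)(n,m\vee n)$ and comparing ranges in $\alpha y=\beta y$) is a harmless elaboration of bookkeeping the paper leaves implicit.
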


\begin{proof}
Since  $\L$ is periodic, there exist $v\in\L^0$ and $m\neq n\in\N^k$ such that for all $\lambda\in v\Lambda$ with $d(\lambda)\geq m\vee n$ we have 
\begin{equation}\label{perlambda}
\lambda(m, m+d(\l)-(m\vee n))=\lambda(n, n+d(\l)-(m\vee n)).
\end{equation}  
For every $x\in v\Lambda^\infty$ and $l\in \N^k$, we can apply \eqref{perlambda} to $\lambda=x(0,(m\vee n)+l)$, and deduce that $x(m,m+l)=x(n,n+l)$; in other words, for all $x\in v\Lambda^\infty$, we have $x(m,\infty)=x(n,\infty)$.  Now take $\nu=(\mu\alpha)(0,n)$, and let $y\in s(\alpha)\Lambda^\infty$.  Then $x:=\mu\alpha y$ belongs to $v\Lambda^\infty$, and hence
\begin{align*}
\mu\alpha y&=(\mu\alpha y)(0,n)(\mu\alpha y)(n,\infty)
=(\mu\alpha)(0,n)(\mu\alpha y)(n,\infty)\\
&=\nu(\mu\alpha y)(n,\infty)
=\nu(\mu\alpha y)(m,\infty)=\nu\alpha y.\qedhere
\end{align*}
\end{proof}

The following lemma is used in the proofs of Proposition~\ref{aperiodicbasicideal} and Theorem~\ref{allbasicgraded}.

\begin{lemma}\label{kernelofpathrep}
Let $\pi_{Q,T}:\KP_R(\L)\to\End(\FF_R(\L^\infty))$ be the infinite-path representation constructed on page~\pageref{infinitepathrep}. If $\L$ is periodic then there exist $\mu,\nu,\alpha\in\L$ such that \[0\neq s_{\mu\alpha}s_{(\mu\alpha)^*}-s_{\nu\alpha}s_{(\mu\alpha)^*}\in\ker\pi_{Q,T}.\]
\end{lemma}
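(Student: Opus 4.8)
The plan is to build a counterexample to injectivity of the infinite-path representation directly from Lemma~\ref{Lambdaperiodic}. Since $\L$ is periodic, that lemma produces $v\in\L^0$ and $m\neq n\in\N^k$ such that for every $\mu\in v\L^m$ and every $\alpha\in s(\mu)\L^{(m\vee n)-m}$ there is $\nu\in v\L^n$ with $\mu\alpha y=\nu\alpha y$ for all $y\in s(\alpha)\L^\infty$. Fix one such triple $\mu,\alpha,\nu$; note $d(\mu\alpha)=d(\nu\alpha)=m\vee n=:q$, so $s_{\mu\alpha}s_{(\mu\alpha)^*}$ and $s_{\nu\alpha}s_{(\mu\alpha)^*}$ are genuine spanning monomials (here I use Remarks~\ref{1stconseq}(a) to cover the case $q$ could involve vertices, though $q\neq 0$ since $m\neq n$). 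The candidate element is $z:=s_{\mu\alpha}s_{(\mu\alpha)^*}-s_{\nu\alpha}s_{(\mu\alpha)^*}$.

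First I would show $z\in\ker\pi_{Q,T}$. Recall from the construction on page~\pageref{infinitepathrep} that for a path $\rho$ and $x\in\L^\infty$ one has $T_\rho(x)=\rho x$ if $r(x)=s(\rho)$ and $0$ otherwise, while $T_{\rho^*}(x)=x(d(\rho),\infty)$ if $x(0,d(\rho))=\rho$ and $0$ otherwise. Applying $\pi_{Q,T}(z)=T_{\mu\alpha}T_{(\mu\alpha)^*}-T_{\nu\alpha}T_{(\mu\alpha)^*}$ to a basis vector $x\in\L^\infty$: if $x(0,q)\neq\mu\alpha$ both terms vanish; if $x(0,q)=\mu\alpha$, write $x=\mu\alpha y$ with $y:=x(q,\infty)\in s(\alpha)\L^\infty$ (Lemma~\ref{lemma2infinitepaths}(b)), and then $T_{(\mu\alpha)^*}(x)=y$, so $\pi_{Q,T}(z)(x)=\mu\alpha y-\nu\alpha y$, which is $0$ by the choice of $\nu$ from Lemma~\ref{Lambdaperiodic}. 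Hence $\pi_{Q,T}(z)=0$.

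Next I would show $z\neq 0$ in $\KP_R(\L)$. The cleanest route is to use that $z$ is already in normal form (the two ghost parts $(\mu\alpha)^*$ have the same degree $q$), so if $\mu\alpha\neq\nu\alpha$ then $z$ is a sum over the finite set $F=\{(\mu\alpha,\mu\alpha),(\nu\alpha,\mu\alpha)\}$ with coefficients $1,-1\in R\setminus\{0\}$ (assuming $1\neq 0$ in $R$; if $R=0$ the whole algebra is zero and there is nothing to prove, so I may assume $R$ is nontrivial), and Lemma~\ref{key} applied with $\gamma=\mu\alpha$, $\delta=\mu\alpha$ shows $s_{(\mu\alpha)^*}zs_{\mu\alpha}$ has $0$-graded component $p_{s(\alpha)}\neq 0$, whence $z\neq 0$. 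It remains to rule out $\mu\alpha=\nu\alpha$: this would force $\mu=\nu$ and hence $m=d(\mu)=d(\nu)=n$, contradicting $m\neq n$. The main obstacle is thus really just the bookkeeping: checking that $z$ is in normal form with the two required paths distinct, and invoking Lemma~\ref{key} correctly to extract a nonzero vertex term — everything else is a direct unwinding of the definitions of $Q,T$ and of Lemma~\ref{Lambdaperiodic}. Finally I would rename $\mu\alpha,\nu\alpha,\alpha$ if needed to match the statement's assertion "there exist $\mu,\nu,\alpha\in\L$"; concretely the lemma's $\mu,\nu,\alpha$ are our original $\mu,\nu$ and $\alpha$, so $s_{\mu\alpha}s_{(\mu\alpha)^*}-s_{\nu\alpha}s_{(\mu\alpha)^*}$ is exactly the displayed element.
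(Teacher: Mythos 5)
Your proposal is correct and follows the paper's proof closely: you take exactly the element $s_{\mu\alpha}s_{(\mu\alpha)^*}-s_{\nu\alpha}s_{(\mu\alpha)^*}$ supplied by Lemma~\ref{Lambdaperiodic}, and your verification that it lies in $\ker\pi_{Q,T}$ is the same case analysis on whether $x(0,d(\mu\alpha))=\mu\alpha$ that the paper uses. The only divergence is the nonvanishing step: the paper argues by contradiction (if the element were zero, the two monomials would be equal yet lie in the graded components of degrees $0$ and $n-m\neq 0$, forcing both to vanish and hence $p_{s(\alpha)}=0$, contradicting Theorem~\ref{$KP_R$}), whereas you compress to the corner $s_{(\mu\alpha)^*}zs_{\mu\alpha}$ and read off its $0$-graded component $p_{s(\alpha)}\neq 0$; both routes rest on the same two ingredients, the $\Z^k$-grading and the nonvanishing of $rp_v$ for $r\neq 0$. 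One caveat: you cannot literally cite Lemma~\ref{key} for this, since its hypothesis is that the element written in normal form is already nonzero, so invoking it to conclude $z\neq 0$ is formally circular. The repair is immediate: compute directly with (KP2)--(KP3) that $s_{(\mu\alpha)^*}zs_{\mu\alpha}=p_{s(\alpha)}-s_{(\mu\alpha)^*}s_{\nu\alpha}$, note by Lemma~\ref{pathprods} that $s_{(\mu\alpha)^*}s_{\nu\alpha}$ is a sum of monomials of degree $n-m\neq 0$, and conclude that the $0$-graded component is $p_{s(\alpha)}\neq 0$. This is exactly the computation inside the proof of Lemma~\ref{key}, which does not use the nonzero hypothesis, so your argument is sound once stated in this self-contained form.
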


\begin{proof}
Take $v\in \L^0$, $m\neq n\in \N^k$ as given by Lemma~\ref{Lambdaperiodic}, and choose $\mu\in v\L^m$ and $\alpha\in s(\mu)\L^{(m\vee n)-m}$. Then there exists $\nu\in v\Lambda^n$ such that $\mu\alpha y=\nu\alpha y$ for all $y\in \L^\infty$.
Suppose, by way of contradiction, that $a:=s_{\mu\alpha}s_{(\mu\alpha)^*}-s_{\nu\alpha}s_{(\mu\alpha)^*}=0$. Then $s_{\mu\alpha}s_{(\mu\alpha)^*}=s_{\nu\alpha}s_{(\mu\alpha)^*}$.  But $d(s_{\mu\alpha}s_{(\mu\alpha)^*})=d(\mu\alpha)-d(\mu\alpha)=0$, whereas 
\[
d(s_{\nu\alpha}s_{(\mu\alpha)^*})=d(\nu\alpha)-d(\mu\alpha)=d(\nu)+d(\alpha)-d(\mu)-d(\alpha)=n-m\neq 0.  
\]
Thus $s_{\mu\alpha}s_{(\mu\alpha)^*}=s_{\nu\alpha}s_{(\mu\alpha)^*}=0$.
But now
$0=s_{(\mu\alpha)^*}(s_{\mu\alpha}s_{(\mu\alpha)^*})s_{\mu\alpha}=p_{s(\mu\alpha)}^2=p_{s(\alpha)}$
contradicts Theorem~\ref{$KP_R$}.  Hence $a\neq 0$.

To see that $a\in \ker \pi_{Q,T}$ we fix $x\in \L^\infty$ and show that $\pi(a)(x)=0$. Recall that
$\pi_{Q,T}(s_\l)=T_\l$ and $\pi_{Q,T}(s_{\l^*})=T_{\l^*}$ where
\[
T_\l(x) =  \begin{cases}
\l x & \text{if }x(0)=s(\l)\\
0 & \text{otherwise}
\end{cases}\quad \text{and}\quad
T_{\l^*}(x)  = \begin{cases}
x(d(\l),\infty)& \text{if }x(0,d(\l))=\l\\
0 & \text{otherwise.}
\end{cases}
\]
If $x(0,d(\mu\alpha))\neq \mu\alpha$ then $T_{(\mu\alpha)^*}(x)=0$ and hence
$\pi_{Q,T}(a)(x)=T_{\mu\alpha}T_{(\mu\alpha)^*}(x)-T_{\nu\alpha}T_{(\mu\alpha)^*}(x)=0$. On the other hand, if
$x(0,d(\mu\alpha))=\mu\alpha$, then 
$\pi_{Q,T}(a)(x)=(T_{\mu\alpha}-T_{\nu\alpha})(x(d(\mu\alpha),\infty))$ has the form $\mu\alpha y -\nu\alpha y$ for $y=x(d(\mu\alpha),\infty)$, and hence $\pi_{Q,T}(a)(x)=0$. Thus $a\in \ker\pi_{Q,T}$.
\end{proof}

\begin{cor}\label{periodic}
Suppose that $\Lambda$ is a row-finite $k$ graph without sources. Then
the infinite-path representation $\pi_{Q,T}$ from page~\pageref{infinitepathrep}  is injective if and only if $\Lambda$ is aperiodic.
\end{cor}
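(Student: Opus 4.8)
The plan is to assemble two results that have already been proved, since the corollary is just their biconditional repackaging. I would treat the two implications separately and emphasise that essentially no new argument is required.

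For the direction ``$\Lambda$ aperiodic $\Longrightarrow$ $\pi_{Q,T}$ injective'', I would simply invoke Corollary~\ref{CKimplypathrep}. The content there is that $\pi_{Q,T}$ meets the hypothesis of the Cuntz-Krieger uniqueness theorem (Theorem~\ref{CKuniqueness}): as noted in the construction of $\pi_{Q,T}$ in the proof of Theorem~\ref{$KP_R$}, every vertex $v\in\Lambda^0$ is the range of an infinite path (this uses Lemma~\ref{lemma1infinitepaths} and the absence of sources), so for $r\in R\setminus\{0\}$ the endomorphism $\pi_{Q,T}(rp_v)=rQ_v$ is nonzero on that infinite path, whence $\pi_{Q,T}(rp_v)\neq 0$. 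Theorem~\ref{CKuniqueness} then gives injectivity.

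For the converse ``$\pi_{Q,T}$ injective $\Longrightarrow$ $\Lambda$ aperiodic'', I would argue by contraposition: if $\Lambda$ is periodic, Lemma~\ref{kernelofpathrep} produces $\mu,\nu,\alpha\in\Lambda$ with $s_{\mu\alpha}s_{(\mu\alpha)^*}-s_{\nu\alpha}s_{(\mu\alpha)^*}$ a nonzero element of $\ker\pi_{Q,T}$, so $\pi_{Q,T}$ is not injective. There is no real obstacle here; the substance lives in Lemma~\ref{kernelofpathrep} (which rests on Lemma~\ref{Lambdaperiodic}) and in the implication Theorem~\ref{CKuniqueness} $\Longrightarrow$ Corollary~\ref{CKimplypathrep}. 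The only points to check are bookkeeping: that the standing hypotheses (row-finite, no sources) are in force so that $\Lambda^\infty$, the endomorphisms $Q_v$, $T_\lambda$, $T_{\mu^*}$, and both cited results are available, and that ``periodic'' is by definition just the negation of the aperiodicity condition stated before Theorem~\ref{CKuniqueness}.
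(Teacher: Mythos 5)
Your proposal is correct and follows the paper's own argument exactly: one direction is Corollary~\ref{CKimplypathrep} (the Cuntz-Krieger uniqueness theorem applied to $\pi_{Q,T}$), and the other is the contrapositive via Lemma~\ref{kernelofpathrep}. The extra remarks about checking the standing hypotheses are fine but add nothing beyond the paper's two-line proof.
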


\begin{proof}
Lemma~\ref{kernelofpathrep} shows that $\ker \pi_{Q,T}$ is nonzero when $\Lambda$ is periodic, and the converse is Corollary~\ref{CKimplypathrep}.
\end{proof}

\begin{proposition}\label{aperiodicbasicideal} Let $\Lambda$ be a row-finite $k$-graph without sources, and let $R$ be a commutative ring with $1$. Then $\L$ is aperiodic if and only if every nonzero  basic ideal of $\KP_R(\L)$ contains a vertex idempotent $p_v$.
\end{proposition}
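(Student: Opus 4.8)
The plan is to prove the two implications separately, using the machinery already in place.

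\medskip

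\textbf{($\Leftarrow$) Suppose $\L$ is periodic; produce a nonzero basic ideal with no vertex idempotent.} The natural candidate is $I:=\ker\pi_{Q,T}$, the kernel of the infinite-path representation. By Lemma~\ref{kernelofpathrep}, periodicity gives a nonzero element of the form $s_{\mu\alpha}s_{(\mu\alpha)^*}-s_{\nu\alpha}s_{(\mu\alpha)^*}$ in $\ker\pi_{Q,T}$, so $I$ is nonzero. First I would check $I$ is basic: if $rp_v\in\ker\pi_{Q,T}$ with $r\neq 0$, then $r Q_v=0$; but since every vertex $v$ is the range of some infinite path $x$, we have $rQ_v(x)=rx\neq 0$ in $\FF_R(\L^\infty)$ (here $R$-linear independence of the basis $\L^\infty$ is used), a contradiction — hence $p_v\in\ker\pi_{Q,T}$ forces nothing, and in fact no $rp_v$ with $r\neq0$ lies in $I$, so $I$ is (vacuously) basic and moreover contains no $p_v$ at all. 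That simultaneously shows $I$ contains no vertex idempotent, completing this direction.

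\medskip

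\textbf{($\Rightarrow$) Suppose $\L$ is aperiodic; show every nonzero basic ideal contains a $p_v$.} Let $I$ be a nonzero basic ideal and pick $0\neq x\in I$. By Lemma~\ref{normalform-lem}, write $x$ in normal form $x=\sum_{(\alpha,\beta)\in F}r_{\alpha,\beta}s_\alpha s_{\beta^*}$. Since $\L$ is aperiodic, Proposition~\ref{idealvertex} applies: there exist $\sigma,\tau\in\L$, $(\delta,\gamma)\in F$ and $w\in\L^0$ with $s_{\sigma^*}xs_\tau=r_{\delta,\gamma}p_w$. As $I$ is an ideal and $x\in I$, the element $r_{\delta,\gamma}p_w=s_{\sigma^*}xs_\tau$ lies in $I$, and $r_{\delta,\gamma}\neq 0$ because $x$ is in normal form. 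Since $I$ is basic, $r_{\delta,\gamma}p_w\in I$ with $r_{\delta,\gamma}\in R\setminus\{0\}$ forces $p_w\in I$. Thus $I$ contains the vertex idempotent $p_w$.

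\medskip

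\textbf{Main obstacle.} The substantive content is really packaged in the two cited results — Proposition~\ref{idealvertex} (which in turn rests on Lemma~\ref{aperiodicitylemma} and Lemma~\ref{key}) for one direction, and Lemma~\ref{kernelofpathrep} for the other — so at the level of this proposition the only care needed is the bookkeeping for the ``basic'' condition on $\ker\pi_{Q,T}$: one must be explicit that the action of $Q_v$ on the free module $\FF_R(\L^\infty)$ is injective on the span of paths with range $v$, which uses both that $v\L^\infty\neq\emptyset$ (Lemma~\ref{lemma1infinitepaths}) and the freeness of the module. Everything else is a direct invocation of the ideal axioms. I would present the aperiodic direction first since it is the cleaner application, then dispatch the periodic direction via the kernel of the infinite-path representation.
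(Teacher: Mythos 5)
Your proof is correct. The periodic direction is essentially the paper's own argument: the kernel of the infinite-path representation is nonzero by Lemma~\ref{kernelofpathrep}, and your explicit check that no $rp_v$ with $r\neq 0$ lies in that kernel (because $rQ_v(x)=rx\neq 0$ in the free module for any $x\in v\L^\infty$, which exists since $\L$ has no sources) is precisely what the paper compresses into the phrase that the kernel is ``basic and by construction contains no $p_v$''. For the aperiodic direction you take a genuinely different, more direct route than the paper: the paper supposes $I$ is a basic ideal containing no $p_v$, observes that $(p+I,s+I)$ is then a Kumjian-Pask family whose induced homomorphism is the quotient map, uses basicness to see that each $r(p_v+I)$ is nonzero, and invokes the Cuntz-Krieger uniqueness theorem (Theorem~\ref{CKuniqueness}) to conclude $I=\{0\}$; you instead apply Proposition~\ref{idealvertex} directly to a nonzero element $x\in I$ in normal form, obtaining $r_{\delta,\gamma}p_w=s_{\sigma^*}xs_\tau\in I$ with $r_{\delta,\gamma}\neq 0$, and basicness then forces $p_w\in I$. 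Both arguments rest on the same aperiodicity machinery (Lemma~\ref{aperiodicitylemma} packaged as Proposition~\ref{idealvertex}), but yours bypasses the quotient construction and the uniqueness theorem, giving the vertex idempotent constructively rather than by contradiction; indeed the paper runs exactly your argument later, in the proof of Proposition~\ref{res-ind}\eqref{res-ind2}, so your variant is fully consistent with the paper's toolkit and is arguably the cleaner application at this point.
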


\begin{proof}
If $\L$ is periodic, then we know from Lemma~\ref{kernelofpathrep} that the kernel of the infinite-path representation is nonzero and basic, and by construction contains no $p_v$. So suppose that $\L$ is aperiodic, and $I$ is a basic ideal in $\KP_R(\L)$ such that $p_v\not\in I$ for all $v\in \L^0$; we want to show that $I=\{0\}$. 

 If either $s_\l\in I$ or $s_{\l^*}\in I$ then $p_{s(\l)}=s_{\l^*}s_\l\in
I$, contradicting the assumption. Thus $p_v+I,s_\l+I,s_{\mu^*}+I$ are  nonzero for all $v\in\L^0$ and $\lambda,\mu\in\Lambda^{\neq0}$, and they form a Kumjian-Pask $\L$-family in $\KP_R(\L)/I$ which induces a surjective homomorphism $\pi_{p+I,s+I}:\KP_R(\L)\to \KP_R(\L)/I$ such that $\pi_{p+I,s+I}(p_v)=p_v+I$. 

Suppose that $\pi_{p+I,s+I}(rp_v)=0$ for some $r\in R\setminus\{0\}$. Then $0=\pi_{p+I,s+I}(rp_v)=r(p_v+I)$
implies that $rp_v\in I$, and,  since $I$ is basic, this implies $p_v\in I$,  a contradiction.  Thus $\pi_{p+I,s+I}(rp_v)\neq 0$ for all $r\in R\setminus\{0\}$. Since $\Lambda$ is aperiodic,  the Cuntz-Krieger uniqueness theorem implies that $\pi_{p+I,s+I}$ is an isomorphism. But $\pi_{p+I,s+I}$ is the quotient map, and hence $I=\{0\}$, as required.
\end{proof}

\begin{proof}[Proof of Theorem~\ref{allbasicgraded}] Suppose that $\L^0$ contains a saturated hereditary subset $H$ such that $\L\setminus H$ is periodic. Let $(q,t)$ be the universal Kumjian-Pask $\L\setminus H$ family in $\KP_R(\L\setminus H)$. Then Lemma~\ref{kernelofpathrep} implies that the kernel of the infinite-path representation is a nonzero ideal in $\KP_R(\L\setminus H)$ which contains no $rq_v$, and pulling this ideal over under the isomorphism of Proposition~\ref{quotientKP} gives an ideal $K$ in $\KP_R(\L)/I_H$ which contains no $r(p_v+I_H)$ for $r\not=0$ and $v\notin H$. But then the inverse image of $K$ in $\KP_R(\L)$ is an ideal $J$ which strictly contains $I_H$ and satisfies
\begin{align*}
rp_v\in J\text{ for some }r\not=0&\Longrightarrow r(p_v+I_H)\in K\text{ for some }r\not=0\\
&\Longrightarrow p_v\in J\\
&\Longrightarrow v\in H.
\end{align*}
These implications show, first, that $J$ is basic, and, second, that $H_J=H$. But then $J\not=I_{H_J}=I_H$, and $J$ cannot be graded by Theorem~\ref{latticehersat}.

Conversely, suppose that every $\L\setminus H$ is aperiodic, and that $J$ is a nonzero basic ideal of $\KP_R(\L)$. We trivially have $I_{H_J}\subset J$, and we claim that in fact $I_{H_J}=J$. Suppose not. Then $J/I_{H_J}$ is a nonzero ideal in $\KP_R(\L)/I_{H_J}$, and its image $L$ under the isomorphism of Proposition~\ref{quotientKP} is a nonzero ideal in $\KP_R(\L\setminus H_J)$. This ideal $L$ is basic: if $r\not=0$ and $q_v$ is a vertex idempotent in $\KP_R(\L\setminus H_J)$, then
\[
rq_v\in L\Longrightarrow rp_v+I_{H_J}\in J/I_{H_J}
\Longrightarrow rp_v\in J\Longrightarrow p_v\in J\Longrightarrow q_v\in L.
\]
Since $\L\setminus H_J$ is aperiodic, Proposition~\ref{aperiodicbasicideal} implies that $L$ contains some $q_v$ for $v\in \L^0\setminus H_J$. But then $J$ contains $p_v$, and $v\in H_J$, which is a contradiction. Thus $J=I_{H_J}$, and Lemma~\ref{manyconditions} implies that $J$ is graded.
\end{proof}

As in \cite{T2}, we say that $\KP_R(\Lambda)$ is \emph{basically simple} if its only basic ideals are $\{0\}$ and $\KP_R(\Lambda)$. If $R$ is a field, then every ideal is basic, and hence basic simplicity is the same as simplicity.

Our next goal is necessary and sufficient conditions for basic simplicity of $\KP_R(\L)$.  We do this independently of Theorem~\ref{latticehersat} by following the approach of \cite{RS}.  
A $k$-graph $\L$ is \emph{cofinal} if for every $x\in \L^\infty$ and every $v\in \L^0$, there exists $n\in \N^k$ such that $v\L x(n)\neq \emptyset$.  This cofinality condition is based on the one used for directed graphs in \cite[\S3]{KPRR}.

\begin{lemma}\label{cofinalimplieshersat} If $\L$ is cofinal then the only saturated hereditary
subsets of $\Lambda^0$ are $\emptyset$ and $\L^0$.
\end{lemma}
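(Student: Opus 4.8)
The plan is to prove the contrapositive: assuming $H$ is a saturated hereditary subset of $\L^0$ with $H \neq \emptyset$, I will show $H = \L^0$, by showing that cofinality forces every vertex into $H$.

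First I would fix a vertex $w \in H$. The key observation is that because $\L$ has no sources, $w$ receives paths of every degree, and so (via Lemma~\ref{lemma1infinitepaths}, choosing a compatible sequence of paths $\lambda_i \in w\L^{n(i)}$ with $n(i)_j \to \infty$) there is an infinite path $x \in w\L^\infty$, i.e.\ with $r(x) = x(0) = w$. Now take an arbitrary $v \in \L^0$; I must show $v \in H$. By cofinality applied to this $x$ and this $v$, there exists $n \in \N^k$ and a path $\mu \in v\L x(n)$, that is, a path $\mu$ with $r(\mu) = v$ and $s(\mu) = x(n)$.

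The second step is to locate $x(n)$ inside $H$. Since $x(0) = w \in H$ and $x(0,n) \in \L$ is a path with $r(x(0,n)) = x(0) = w \in H$, hereditariness of $H$ gives $s(x(0,n)) = x(n) \in H$. Therefore $\mu$ is a path with $r(\mu) = v$ and $s(\mu) = x(n) \in H$. I would now invoke hereditariness once more — but in the "wrong" direction it does not immediately apply, so here is the real point: I instead use that $H$ is \emph{saturated}. I would argue that, having a single path $\mu \in v\L^{d(\mu)}$ landing in $H$ is not yet enough; rather I should run the argument for \emph{every} vertex, or better, observe that cofinality actually lets me push the conclusion along. The cleanest route: note that $\{u \in \L^0 : u\L x(n) \neq \emptyset \text{ for some } n\}$ together with hereditariness shows the set $K := \{u \in \L^0 : s(u\L^p) \cap H \neq \emptyset \text{ for suitable } p\}$ behaves well. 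Concretely, I would show directly that the complement $\L^0 \setminus H$ cannot contain $v$: if $v \notin H$, consider whether $s(v\L^p) \subseteq H$ for some $p$; saturation would then put $v \in H$. So the task reduces to showing that for a suitable $p$, \emph{all} of $s(v\L^p)$ lies in $H$ — and this is where I would use cofinality more carefully, applied to every infinite path through $v$, to force each such source vertex to connect into $H$, then feed hereditariness.

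The main obstacle I anticipate is precisely bridging the gap between "one path from $v$ reaches $H$" (which hereditariness alone does not convert into $v \in H$) and "$v \in H$": this genuinely requires saturation, and the delicate part is arranging that \emph{all} paths of some fixed degree $p$ out of $v$ have source in $H$. I expect the right tool is to apply cofinality to each $y \in v\L^\infty$ to find that $y$ eventually meets $H$, then use a compactness/König-type argument on the row-finite tree of finite paths from $v$ to extract a uniform degree $p$ with $s(v\L^p) \subseteq H$; saturation then finishes. Row-finiteness is essential for the uniform bound, and the absence of sources guarantees the infinite paths exist in the first place.
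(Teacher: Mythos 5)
Your plan is workable, but it is organized differently from the paper's proof, and the comparison is instructive. You fix $v$ and argue forwards: every infinite path $y\in v\L^\infty$ must meet $H$ (cofinality applied to $y$ and a fixed $w\in H$ gives $\mu\in w\L y(n)$, and hereditariness then puts $y(n)\in H$), and you then invoke a K\H{o}nig/compactness argument on the row-finite tree of paths from $v$ to get a single degree $p$ with $s(v\L^p)\subset H$, so that saturation is used exactly once, at the end. The paper instead argues by contradiction and uses saturation iteratively: if $v\notin H$, then for each term of a cofinal sequence $n(i)$ saturation produces $\lambda_{i+1}\in s(\lambda_i)\L^{n(i+1)-n(i)}$ with $s(\lambda_{i+1})\notin H$, and Lemma~\ref{lemma1infinitepaths} assembles these into one infinite path $y$ from $v$ with $y(n(i))\notin H$ for all $i$; cofinality then gives $w\L y(m)\neq\emptyset$, hereditariness gives $y(m)\in H$ and then $y(n(i_0))\in H$ for $n(i_0)\geq m$, a contradiction. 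In effect your K\H{o}nig step, unwound, rebuilds exactly this path: the nonemptiness of each level of your ``$H$-avoiding'' tree is precisely what saturation supplies when $v\notin H$. The paper's packaging buys two things: it never needs row-finiteness (your uniform bound does), and it avoids the compactness machinery altogether. If you carry out your version, two details must be nailed down that your sketch only gestures at: (i) the $H$-avoiding finite paths from $v$ form a subtree, i.e.\ if $\lambda\in v\L^{n(i+1)}$ has $s(\lambda)\notin H$ then $\lambda(n(i))\notin H$ --- this again uses hereditariness, and is needed before K\H{o}nig's lemma applies; and (ii) an infinite path avoiding $H$ at the cofinal degrees $n(i)$ avoids it at every degree (hereditariness once more), so that it really contradicts ``every infinite path from $v$ meets $H$.'' The digression in your middle paragraph about the auxiliary set $K$ is not needed and can be dropped.
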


\begin{proof}
Suppose there exists a nontrivial saturated hereditary subset $H$ of $\L^0$. Choose $v\in
\L^0\setminus H$ and $w\in H$. Choose a sequence $\{n(i)\}$ in $\N^k$ such that $n(i)\leq n(i+1)$ and $n(i)\to\infty$
in the sense that $n(i)_j\to \infty$ as $i\to \infty$ for $1\leq j\leq k$.
Since $v\notin H$ and $H$ is saturated, there exists  $\lambda_1\in v\Lambda^{n(1)}$ such that $s(\lambda_1)\notin H$.  By induction, for $i\geq 1$ there exists $\lambda_{i+1}\in s(\lambda_i)\Lambda^{n(i+1)-n(i)}$ such that $s(\lambda_{i+1})\notin H$. Now set $\mu_1=\lambda_1$ and $\mu_{i+1}=\mu_i\lambda_{i+1}$ for $i\geq 1$. Then $\mu_{i+1}(0, n(i))=\mu_i$, and by Lemma~\ref{lemma1infinitepaths}  there exists $y\in\L^\infty$ such that $y(0,n(i))=\mu_i=\lambda_1\dots\lambda_i$.

Since $\Lambda$ is cofinal,  there exists $m\in \N^k$ such that $w\L y(m)\neq \emptyset$. Since $w\in H$ and $H$ is
hereditary, we have $y(m)\in H$. Choose $i_0\in \N$
such that $n(i_0)\geq m$. Then $y(n(i_0))=s(\l_{i_0})$ belongs to $H$ because $H$ is hereditary.  But $s(\l_{i_0})\notin H$ by construction, and we have a contradiction.  So the only saturated hereditary subsets are the trivial ones.
\end{proof}

\begin{proposition}\label{cofinalbasicideal} Let $\Lambda$ be a row-finite $k$-graph without sources, and let $R$ be a commutative ring with $1$. Then $\L$ is cofinal if and only if the only basic ideal containing a vertex idempotent $p_v$ is $\KP_R(\L)$.
\end{proposition}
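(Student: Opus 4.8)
The plan is to route everything through the saturated hereditary set $H_I=\{w\in\L^0:p_w\in I\}$ attached to an ideal $I$ by Lemma~\ref{hersatboth}. For the ``only if'' direction I would invoke Lemma~\ref{cofinalimplieshersat} directly; for the ``if'' direction I would argue contrapositively and build an explicit nontrivial saturated hereditary set out of a failure of cofinality, then feed it to Lemma~\ref{manyconditions}.

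\emph{If $\L$ is cofinal:} let $I$ be a basic ideal containing a vertex idempotent $p_v$. By Lemma~\ref{hersatboth} the set $H_I$ is saturated and hereditary, and it is nonempty because $v\in H_I$; by Lemma~\ref{cofinalimplieshersat} it must therefore be all of $\L^0$, so $p_w\in I$ for every $w\in\L^0$. Since each spanning element $s_\alpha s_{\beta^*}$ of $\KP_R(\L)$ (see \eqref{spanning}) equals $p_{r(\alpha)}s_\alpha s_{\beta^*}$ and hence lies in $I$, we conclude $I=\KP_R(\L)$.

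\emph{If $\L$ is not cofinal:} I would exhibit a proper basic ideal containing a vertex idempotent. Non-cofinality supplies $x\in\L^\infty$ and $v\in\L^0$ with $v\L x(n)=\emptyset$ for all $n\in\N^k$, and I would take
\[
H:=\{w\in\L^0:w\L x(n)=\emptyset\text{ for all }n\in\N^k\},
\]
so that $v\in H$. Heredity of $H$ is immediate: if $r(\lambda)\in H$ and $\mu\in s(\lambda)\L x(n)$, then $\lambda\mu\in r(\lambda)\L x(n)$, a contradiction. For saturation, suppose $s(w\L^{n_0})\subseteq H$ but $w\notin H$, pick $\mu\in w\L x(m)$, use Lemma~\ref{lemma2infinitepaths} to prolong it to $\nu:=\mu\,x(m,m+n_0)\in w\L x(m+n_0)$ with $d(\nu)\geq n_0$, and factor $\nu=\nu'\nu''$ with $d(\nu')=n_0$: then $s(\nu')\in s(w\L^{n_0})\subseteq H$ while $\nu''\in s(\nu')\L x(m+n_0)$, a contradiction. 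Finally $x(0,n)\in r(x)\L x(n)$ shows $r(x)\notin H$, so $H$ is a nontrivial saturated hereditary set; by Lemma~\ref{manyconditions}, $I_H$ is a basic ideal with $H_{I_H}=H$, it contains $p_v$, and it is proper since $H_{I_H}=H\neq\L^0$.

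The one point needing care is the saturation of $H$: the path witnessing $w\notin H$ need not have degree $\geq n_0$, so it must first be prolonged along $x$ (this is where the no-sources hypothesis and the composition/factorization of Lemma~\ref{lemma2infinitepaths} enter) before the hypothesis on $s(w\L^{n_0})$ can be applied. Everything else is direct bookkeeping with Lemmas~\ref{hersatboth}, \ref{cofinalimplieshersat} and \ref{manyconditions}, so I do not expect a real obstacle.
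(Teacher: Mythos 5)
Your proof is correct and follows essentially the same route as the paper: the cofinal direction is the paper's argument via $H_I$, Lemma~\ref{hersatboth} and Lemma~\ref{cofinalimplieshersat}, and the non-cofinal direction uses exactly the paper's set $H_x=\{w:w\L x(n)=\emptyset\text{ for all }n\}$ together with Lemma~\ref{manyconditions}. The only difference is that the paper cites the proof of \cite[Proposition~3.4]{RS} for the fact that this set is saturated and hereditary, whereas you verify it directly; your verification, including the key step of prolonging a path $\mu\in w\L x(m)$ along $x$ to degree at least $n_0$ before invoking $s(w\L^{n_0})\subseteq H$, is sound.
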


\begin{proof}
Suppose that $\L$ is cofinal, and $I$ is a basic ideal containing some $p_w$. Then $H_I=\{v\in \L^0:p_v\in I\}$ is  nonempty, and is saturated and hereditary by Lemma \ref{hersatboth}. Since $\Lambda$ is cofinal, $H_I=\L^0$ by Lemma~\ref{cofinalimplieshersat}. Thus $p_v\in I$ for all $v\in\Lambda^0$, and we have
\[ \KP_R(\L)=\lsp\{s_\alpha p_{s(\alpha)}s_{\beta^*} : \alpha,\beta\in \L^{\neq 0},
s(\alpha)=s(\beta)\}\subset I.
\]

Now suppose that $\L$ is not cofinal. Then there exist $v\in \L^0$ and an infinite path $x\in \L^\infty$ such that $v\L x(n)=\emptyset$ for every $n\in \N^k$.  By \cite[Proposition 3.4, proof of (ii) $\Rightarrow$ (i)]{RS} the set $H_x:=\{w\in \L^0 : w\L x(n)=\emptyset \text{ for all }n\in \N^k\}$ is a saturated hereditary subset of $\L^0$. Note that $H_x$ is nontrivial since $v\in H_x$ and $x(0)\notin H_x$.
Now $I_{H_x}$ is a basic ideal of $\KP_R(\Lambda)$ by Lemma~\ref{manyconditions}, and $p_v\in I_{H_x}$.  But ${H}_{I_{H_x}}=H_x$ by Lemma~\ref{manyconditions}, and hence $p_{x(0)}\notin I_{H_x}$ because $x(0)\notin {H_x}$. Thus $I_{H_x}\neq \KP_R(\Lambda)$, and we have a nontrivial ideal containing a vertex idempotent.
\end{proof}

\begin{theorem}\label{basicallysimple} Let $\L$ be a row-finite $k$-graph without sources, and let $R$ be a commutative ring with $1$.  Then  $\KP_R(\L)$ is basically simple if and
only if the graph $\L$ is cofinal and aperiodic.
\end{theorem}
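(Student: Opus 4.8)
The plan is to obtain Theorem~\ref{basicallysimple} as a near-immediate consequence of the two characterizations already established: Proposition~\ref{aperiodicbasicideal} (aperiodicity $\Leftrightarrow$ every nonzero basic ideal contains a vertex idempotent) and Proposition~\ref{cofinalbasicideal} (cofinality $\Leftrightarrow$ the only basic ideal containing a vertex idempotent is $\KP_R(\L)$). Both implications in the theorem are just a matter of assembling these, together with the fact from Theorem~\ref{$KP_R$} that $p_v\neq 0$ for every $v\in\L^0$ (so that $\KP_R(\L)$ is nonzero and any ideal containing some $p_v$ is nonzero, while any ideal containing no $p_v$ is proper). I do not expect a genuine obstacle here; the substantive content lives in the preceding propositions, particularly in Lemma~\ref{kernelofpathrep}, which supplies the bad ideal in the periodic case.

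For the forward direction, I would assume $\KP_R(\L)$ is basically simple and argue by contraposition on each of the two conditions. If $\L$ is periodic, then by Proposition~\ref{aperiodicbasicideal} there is a nonzero basic ideal $I$ containing no vertex idempotent; since $\KP_R(\L)$ contains every $p_v$, such an $I$ is proper, so $\{0\}\neq I\neq\KP_R(\L)$ contradicts basic simplicity. If $\L$ is not cofinal, then by Proposition~\ref{cofinalbasicideal} there is a basic ideal $I$ that contains a vertex idempotent $p_v$ but is not equal to $\KP_R(\L)$; then $I$ is proper and, since $p_v\neq 0$, also nonzero, again contradicting basic simplicity. Hence basic simplicity forces $\L$ to be both aperiodic and cofinal.

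For the reverse direction, assume $\L$ is cofinal and aperiodic, and let $I$ be any nonzero basic ideal of $\KP_R(\L)$. By Proposition~\ref{aperiodicbasicideal}, aperiodicity gives a vertex idempotent $p_v\in I$. By Proposition~\ref{cofinalbasicideal}, cofinality then forces $I=\KP_R(\L)$. Thus the only basic ideals are $\{0\}$ and $\KP_R(\L)$, which is exactly the assertion that $\KP_R(\L)$ is basically simple. This completes both directions; the only points needing a word of care are the elementary observations that an ideal containing some $p_v$ is nonzero and an ideal containing no $p_v$ is proper, both of which follow from Theorem~\ref{$KP_R$}.
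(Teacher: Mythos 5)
Your proposal is correct and follows essentially the same route as the paper: both directions are obtained by combining Proposition~\ref{aperiodicbasicideal} and Proposition~\ref{cofinalbasicideal}, with the nondegeneracy of the vertex idempotents from Theorem~\ref{$KP_R$} supplying the elementary nonzero/proper observations. The only cosmetic difference is that you phrase the forward direction as a contraposition, whereas the paper applies the two equivalences directly; the content is identical.
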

\begin{proof}
If $\KP_R(\L)$ is basically simple, then the only nonzero basic ideal is $\KP_R(\L)$. So Proposition~\ref{aperiodicbasicideal} implies that $\L$ is aperiodic, and Proposition~\ref{cofinalbasicideal} that $\L$ is cofinal.

Conversely, assume that $\L$ is cofinal and aperiodic and $I$ is a nonzero basic
ideal in $\KP_R(\L)$. By Proposition~\ref{aperiodicbasicideal} there exists $v\in \L^0$ with $p_v\in I$.
But then  $I=\KP_R(\L)$ by Proposition~\ref{cofinalbasicideal}. Thus $\KP_R(\L)$ is basically
simple.
\end{proof}

\begin{remark}
The parametrization of ideals in Cuntz-Krieger algebras by the saturated hereditary subsets  comes from \cite{C}, and was extended to various classes of graph $C^*$-algebras in \cite{KPRR, BPRS, BHRS, HS}. The ideals in the $C^*$-algebras of higher-rank graphs were first analyzed in \cite{RSY}.  The graded ideals in the Leavitt path algebras were described in \cite{AMP}, \cite{T} and \cite{T2}. The simplicity theorem for $C^*$-algebras goes back to Cuntz and Krieger \cite{CK}, and for Leavitt path algebras to Abrams and Aranda Pino \cite{AA1}. Our proof of basic simplicity was inspired by the work of Robertson and Sims \cite{RS}.
\end{remark}


\section{Simplicity}\label{sectionsimple}

Let $\L$ be a row-finite $k$-graph without sources, and write $(p,s)$ for the universal Kumjian-Pask family in $\KP_R(\Lambda)$.  So far the ring $R$ has played little role in our study of $\KP_R(\L)$; in fact, the notion of basic ideal in the previous section was  engineered by Tomforde to avoid dealing with ideals in $R$. The main result of this section is:

\begin{theorem}\label{simplicity} Suppose that $\Lambda$ is a row-finite $k$-graph without sources, and that $R$ is a commutative ring with $1$.  Then $\KP_R(\L)$ is simple if and only if $R$ is a field and $\L$ is aperiodic and cofinal.
\end{theorem}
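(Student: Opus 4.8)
The plan is to reduce the statement to the characterisation of basic simplicity already established in Theorem~\ref{basicallysimple}, using the elementary observation that, when $R$ is a field, every ideal of $\KP_R(\L)$ is basic, so that basic simplicity coincides with simplicity. Everything except one change-of-rings argument is then pure bookkeeping; the new content is the implication that simplicity forces $R$ to be a field.

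I would first dispose of the two easy implications. Every basic ideal is an ideal, so if $\KP_R(\L)$ is simple it is in particular basically simple, and Theorem~\ref{basicallysimple} then tells us that $\L$ is cofinal and aperiodic. Conversely, if $R$ is a field, then a nonzero ideal $I$ of $\KP_R(\L)$ containing some $rp_v$ with $r\neq 0$ also contains $p_v=r^{-1}(rp_v)$; hence every ideal of $\KP_R(\L)$ is basic, and basic simplicity is the same as simplicity. So if in addition $\L$ is cofinal and aperiodic, Theorem~\ref{basicallysimple} gives that $\KP_R(\L)$ is basically simple, and therefore simple.

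The remaining implication, that $\KP_R(\L)$ simple implies $R$ a field, I would prove by contraposition. Suppose $R$ is not a field, and choose $a\in R$ that is nonzero and not a unit; then $\mathfrak{a}:=aR$ is a proper nonzero ideal of $R$. Let $I$ be the ideal of $\KP_R(\L)$ generated by $\{rp_v:r\in\mathfrak{a},\ v\in\L^0\}$. Then $I\neq\{0\}$, since $ap_v\neq 0$ by Theorem~\ref{$KP_R$}. To see that $I$ is proper, observe that $R/\mathfrak{a}$ is a commutative ring with identity $1+\mathfrak{a}\neq 0$, so Theorem~\ref{$KP_R$}, applied over $R/\mathfrak{a}$, shows that $\KP_{R/\mathfrak{a}}(\L)$ is a nonzero ring. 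Viewing $\KP_{R/\mathfrak{a}}(\L)$ as an $R$-algebra via the quotient map $R\to R/\mathfrak{a}$, its universal Kumjian-Pask family is a Kumjian-Pask $\L$-family in an $R$-algebra, so the universal property of $\KP_R(\L)$ (Theorem~\ref{$KP_R$}) yields a surjective $R$-algebra homomorphism $\rho\colon\KP_R(\L)\to\KP_{R/\mathfrak{a}}(\L)$ with $\rho(p_v)=p_v$, $\rho(s_\lambda)=s_\lambda$ and $\rho(s_{\mu^*})=s_{\mu^*}$. Since $\rho(rp_v)=(r+\mathfrak{a})p_v=0$ for every $r\in\mathfrak{a}$, the map $\rho$ kills each generator of $I$, hence kills $I$; thus $\KP_R(\L)/I$ surjects onto the nonzero ring $\KP_{R/\mathfrak{a}}(\L)$, and so $I\neq\KP_R(\L)$. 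Hence $\KP_R(\L)$ has a proper nonzero ideal and is not simple, which completes the contrapositive.

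I do not expect a genuine obstacle specific to this theorem: all the hard work is hidden in Theorem~\ref{basicallysimple} and the uniqueness theorems behind it. The one point that deserves care is the properness of the ideal $I$ in the last step, and I would obtain it from the change-of-rings homomorphism $\rho$ exactly as above, rather than from the sharper (true but unnecessary) isomorphism $\KP_R(\L)/I\cong\KP_{R/\mathfrak{a}}(\L)$.
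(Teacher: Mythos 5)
Your proposal is correct, but it takes a genuinely different route from the paper's. The paper proves Theorem~\ref{simplicity} through the restriction/induction machinery: it shows $\Res\circ\Ind=\id$ in general (via Lemma~\ref{helper}, whose proof needs the normal form and the grading) and $\Ind\circ\Res=\id$ when $\L$ is aperiodic and cofinal (Proposition~\ref{res-ind}), and then reads off both the necessity of $R$ being a field (a nonzero $M\trianglelefteq R$ gives $\Ind M=\KP_R(\L)$, hence $M=\Res\Ind M=R$) and the sufficiency direction from these identities. You instead get the converse by the observation, which the paper itself records before Theorem~\ref{simplicity}, that over a field every ideal is basic, so simplicity follows directly from Theorem~\ref{basicallysimple}; and you get ``simple $\Rightarrow$ $R$ a field'' by exhibiting, for a non-unit $a\neq 0$, the nonzero ideal generated by $\{rp_v:r\in aR\}$ (which is exactly $\Ind(aR)$) and proving it proper via the change-of-rings surjection onto $\KP_{R/aR}(\L)$ — this is the easy half of the paper's Proposition~\ref{quotient}, used without its injectivity assertion, and properness comes from $q_v\neq 0$ in $\KP_{R/aR}(\L)$ (Theorem~\ref{$KP_R$} over $R/aR$) rather than from Lemma~\ref{helper}. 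What your route buys is economy: you bypass Lemma~\ref{helper} and Proposition~\ref{res-ind} entirely, leaning only on Theorem~\ref{basicallysimple} and the universal property. What the paper's route buys is the stronger structural statement that, when $\L$ is aperiodic and cofinal, $\Ind$ and $\Res$ are mutually inverse lattice isomorphisms between $\Ll(R)$ and $\Ll(\KP_R(\L))$ (and its converse, Proposition~\ref{equivalences}), of which the simplicity theorem is then a corollary. One small point worth making explicit if you write this up: ideals here are ring ideals, so the step $p_v=r^{-1}(rp_v)\in I$ implicitly uses that ideals of $\KP_R(\L)$ are closed under the $R$-action, which holds because the vertex idempotents provide local units (or simply cite the paper's own remark that over a field every ideal is basic).
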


This theorem was motivated by the following observations.  If $R$ is an algebra over a commutative ring $S$, then \cite[Theorem 8.1]{T2} implies that $L_R(E)$ is isomorphic to $R \otimes_S L_S(E)$ as an $R$-algebra. Moreover, if $A$ is an $s$-unital algebra over a field $K$, and $E$ is  a cofinal graph in which every cycle has an entry,  then \cite[Corollary 7.8]{AGPS} implies that every ideal of
$A\otimes_K L_K(E)$ has the form $I\otimes_K L_K(E)$ for some ideal $I$ of $A$.

We write $\Ll(A)$ for the lattice of ideals of a ring $A$. Then we can define restriction and induction maps 
\[\Res:\Ll(\KP_R(\Lambda))\to\Ll(R)\quad\text{and}\quad \Ind:\Ll(R)\to \Ll(\KP_R(\L)),
\]
as follows:
\begin{gather*}\Res I:=\{r\in R : rp_v\in I \mbox{ for all }v\in \L^0\}\\
\Ind M:=\lsp_R\{r s_\alpha s_\beta^*: r\in M, \alpha,\beta\in \Lambda\}.
\end{gather*}
One can easily check that $\Res I$ and $\Ind M$ are ideals in $R$ and $\KP_R(\L)$, respectively. 

We will need the following lemma in Proposition~\ref{quotient} and in Proposition~\ref{res-ind}.

\begin{lemma}\label{helper}
Let $M$ be an ideal of $R$, $r\in R$, and $v\in \Lambda^0$. If $rp_v\in\Ind M$, then $r\in M$.
\end{lemma}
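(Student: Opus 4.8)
The plan is to use the $\Z^k$-grading on $\KP_R(\L)$ together with the precise description of the degree-zero component coming from Lemma~\ref{pathprods} (really Lemma~\ref{key}), exactly as in the proofs of the uniqueness theorems. The point is that $\Ind M$ is a graded ideal, being spanned by the homogeneous elements $rs_\alpha s_{\beta^*}$, so we may work one graded piece at a time.

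First I would reduce to the degree-zero component. Suppose $rp_v \in \Ind M$. Multiplying on the left by $s_{v^*}=p_v$ and on the right by $s_v = p_v$ does nothing, so I instead write $rp_v$ as an element of $\Ind M$ in terms of spanning monomials $r_j s_{\alpha_j}s_{\beta_j^*}$ with $r_j\in M$, put this expression into normal form as in Lemma~\ref{normalform-lem} (noting that the coefficients stay inside $M$ since $M$ is an ideal of $R$ and the normal-form manipulations only multiply coefficients by elements of $R$, in fact leave them fixed), and then extract the degree-zero part: since $rp_v$ is itself homogeneous of degree $0$, it equals the degree-zero component of its normal form. By Lemma~\ref{key} (the computation of the $0$-graded component there, applied with the roles reorganised — or more directly by tracking which monomials $s_{\alpha}s_{\beta^*}$ in a normal-form expansion can have degree $0$), the degree-zero component is an $R$-linear combination $\sum_{w} c_w p_w$ of \emph{distinct} vertex idempotents, where each $c_w$ lies in $M$. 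Comparing with $rp_v$, and using that $\{p_w : w\in\L^0\}$ are linearly independent over $R$ in the strong sense guaranteed by the last clause of Theorem~\ref{$KP_R$} (namely $c\,p_w\ne 0$ for $c\ne 0$, together with the orthogonality in (KP1)), I can conclude that the coefficient of $p_v$ on the two sides must agree, hence $r = c_v \in M$.

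The step I expect to be the main obstacle is making precise the claim that ``the degree-zero component of a normal-form expression is a combination of vertex idempotents with coefficients from $M$.'' A monomial $s_\alpha s_{\beta^*}$ in normal form has degree $d(\alpha)-d(\beta)=0$ iff $d(\alpha)=d(\beta)$, and then by (KP3) such a product is either $0$ or a genuine term $s_\alpha s_{\beta^*}$ with $\alpha\ne\beta$ of the same nonzero degree — which is \emph{not} a vertex idempotent — unless $\alpha=\beta$, in which case $s_\alpha s_{\alpha^*}$ is still not a vertex idempotent. So a direct ``read off the degree-zero part'' argument does not immediately collapse to vertices, and I would instead mimic the argument of Lemma~\ref{key}: multiply $rp_v$ on the right by a suitable $s_\gamma$ and on the left by $s_{\delta^*}$ to isolate an honest scalar multiple of a vertex idempotent. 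Concretely, from $rp_v \in \Ind M$ and $rp_v\ne 0$ (the case $rp_v=0$ being trivial since then we may take... actually if $rp_v=0$ we still must show $r\in M$; here I use $rp_v=0 \Rightarrow r=0$ from Theorem~\ref{$KP_R$}, and $0\in M$), I apply Lemma~\ref{key} to a normal form of $rp_v$ to get $\delta,\gamma\in\L$ with $s_{\delta^*}(rp_v)s_\gamma = r' p_{s(\delta)} + (\text{terms of nonzero degree})$, where $r'$ is one of the normal-form coefficients and hence lies in $M$; but $s_{\delta^*}(rp_v)s_\gamma$ also lies in $\Ind M$, so its degree-zero component $r' p_{s(\delta)}$ does too (graded-ness of $\Ind M$ again), and now $r' p_{s(\delta)}$ really is a scalar multiple of a single vertex idempotent. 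Finally I would need to relate $r'$ back to $r$: in the normal form of the homogeneous element $rp_v$, chasing Lemma~\ref{normalform-lem} and Lemma~\ref{key} shows the surviving coefficients are all equal to $r$ (since $rp_v = \sum_{\lambda\in v\L^{n}} (rp_v) s_\lambda s_{\lambda^*}$ has every coefficient $r$), so $r=r'\in M$, as desired.

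I would write the proof to lean on the already-established machinery — Lemmas~\ref{normalform-lem} and~\ref{key}, the grading and vertex-nonvanishing from Theorem~\ref{$KP_R$}, and the observation that $\Ind M$ is graded — so that it is short; the only genuinely new verification is that the normal-form coefficients of the homogeneous element $rp_v$ all equal $r$, which follows from applying (KP4) at $v$ and noting each resulting term $rs_\lambda s_{\lambda^*}$ keeps the coefficient $r$, together with the fact that these terms are already a normal form.
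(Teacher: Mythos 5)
There is a genuine gap at the very last step, where you pass from the coefficient $r'=r_{\delta,\gamma}\in M$ produced by Lemma~\ref{key} back to $r$ itself. Your justification is that $rp_v=\sum_{\lambda\in v\L^n} r\,s_\lambda s_{\lambda^*}$ is a normal form all of whose coefficients equal $r$; but that is a \emph{different} normal form from the one you are forced to work with, namely the one obtained by normal-forming an expression of $rp_v$ as an element of $\Ind M$ (which is the only expression whose coefficients you know to lie in $M$). Normal forms are not unique --- nothing in the paper shows that the monomials $s_\alpha s_{\beta^*}$ with a fixed ghost degree $d(\beta)$ are $R$-linearly independent --- so the existence of one normal form with all coefficients equal to $r$ says nothing about the coefficients of the other, and the assertion $r=r'$ is unproved. (Such a linear-independence/uniqueness statement is in fact provable, but the proof needs exactly the kind of two-sided multiplication argument you are trying to avoid, so it cannot simply be waved through.) The intermediate observation that $r'p_{s(\delta)}$ lies in $\Ind M$ because $\Ind M$ is graded does not help either: it merely restates the original problem with $r'$ in place of $r$.

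The paper closes this gap by taking the left factor to be $s_{\gamma^*}$ with the \emph{same} $\gamma$ used on the right, and computing $s_{\gamma^*}(rp_v)s_\gamma$ in two ways. Directly: nonvanishing of $(rp_v)s_\gamma$ forces $v=r(\gamma)$, so $s_{\gamma^*}(rp_v)s_\gamma=r\,s_{\gamma^*}s_\gamma=rp_{s(\gamma)}$. Via the normal form: $(rp_v)s_\gamma$ is homogeneous of degree $d(\gamma)$, so only the terms with $d(\alpha)=d(\gamma)$ survive, and then (KP3) collapses $\sum_{\{\alpha\in G:\,d(\alpha)=d(\gamma)\}} r_{\alpha,\gamma}s_{\gamma^*}s_\alpha$ to $r_{\gamma,\gamma}p_{s(\gamma)}$ if $\gamma\in G$ and to $0$ otherwise. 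Comparing the two computations, either $(r-r_{\gamma,\gamma})p_{s(\gamma)}=0$ or $rp_{s(\gamma)}=0$, and the clause of Theorem~\ref{$KP_R$} saying $rp_w\neq 0$ for $r\neq 0$ gives $r=r_{\gamma,\gamma}\in M$ or $r=0\in M$. Your earlier reductions (the trivial case $rp_v=0$, normal-forming with coefficients staying in $M$, and the correct recognition that naively extracting the degree-zero part does not collapse to vertex idempotents) all agree with the paper; it is only the final identification of the scalar that needs the paper's ``same $\gamma$ on both sides'' trick, or else a separate proof that normal form at a fixed ghost degree is unique.
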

\begin{proof}
If $rp_v=0$ then $r=0$ and is in $M$. So suppose $rp_v\not=0$. 
We have $rp_v=\sum_{(\alpha,\beta)\in F}
r_{\alpha,\beta}s_{\alpha}s_{\beta^*}$ for some $r_{\alpha,\beta}\in M\setminus\{0\}$; by Lemma~\ref{normalform-lem} we may assume this is in normal form, and a glance at the proof of  Lemma~\ref{normalform-lem} shows that the $r_{\alpha,\beta}$ are then still in $M\setminus\{0\}$.
By Lemma~\ref{key}  there exists $\gamma\in \Lambda$ and a finite set $G\subset\L$ such that
$0\neq rp_vs_\gamma=\sum_{\alpha\in G} r_{\alpha,\gamma} s_\alpha$.
Since $\KP_R(\L)$ is ${\mathbb Z}^k$-graded  we have 
\[
 0\neq(rp_v)s_\gamma=\sum_{\{\alpha\in G\;:\;d(\alpha)=d(\gamma)\}}r_{\alpha,\gamma} s_\alpha.
\]
We must have $v=r(\gamma)$, and applying (KP3) gives
\begin{align*}
rp_{s(\gamma)}  =rs_{\gamma^*}s_\gamma&= s_{\gamma^*}(rp_v)s_\gamma=\sum_{\{\alpha\in G : d(\alpha)=
d(\gamma)\}}r_{\alpha,\gamma}  s_{\gamma^*}s_\alpha\\
&= \begin{cases}r_{\gamma,\gamma} p_{s(\gamma)} &
\text{if $\gamma\in G$}\\ 
0 &
\text{otherwise.}
\end{cases}
\end{align*}
But now   either $(r-r_{\gamma,\gamma})p_{s(\gamma)}=0$  or $rp_{s(\gamma)}=0$, and hence either $r=r_{\gamma,\gamma}$ or $r=0$ by Theorem~\ref{$KP_R$}. In either case, $r\in M$.
\end{proof}

\begin{proposition}\label{quotient} Suppose that $\Lambda$ is a row-finite $k$-graph without sources, that $R$ is a commutative ring with $1$ and  that $M$ is a proper ideal of $R$.  Then $\KP_R(\L)/\Ind M$ is an $R/M$-algebra with $(r+M)(x+\Ind M)=rx+\Ind M$, and there is an isomorphism $\pi$ of $\KP_{R/M}(\L)$ onto $\KP_R(\L)/\Ind M$ which takes the universal Kumjian-Pask family $(q,t)$ in $\KP_{R/M}(\Lambda)$ to $(p+\Ind M, s+\Ind M)$.
\end{proposition}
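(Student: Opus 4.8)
The plan is to construct the isomorphism in the natural direction first, namely to produce a surjection $\KP_{R/M}(\L)\to\KP_R(\L)/\Ind M$ using the universal property of $\KP_{R/M}(\L)$, and then prove injectivity. Before that one must check the preliminary algebraic assertion that $\KP_R(\L)/\Ind M$ is an $R/M$-algebra: since $\Ind M\supseteq Mp_v$ for all $v$ and $M\cdot\KP_R(\L)$ is spanned by elements $rs_\alpha s_{\beta^*}=s_\alpha(rp_{s(\alpha)})s_{\beta^*}$ with $r\in M$, we have $M\cdot\KP_R(\L)\subseteq\Ind M$ (indeed equality holds once one observes $rs_\alpha s_{\beta^*}$ is already of the required form), so the $R$-action descends to a well-defined $R/M$-action $(r+M)(x+\Ind M):=rx+\Ind M$ on the quotient, and the formula is independent of the representative $r$.

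The next step is to observe that the images $(P_v,S_\lambda,S_{\mu^*}):=(p_v+\Ind M,\ s_\lambda+\Ind M,\ s_{\mu^*}+\Ind M)$ form a Kumjian-Pask $\L$-family in the $R/M$-algebra $\KP_R(\L)/\Ind M$: relations (KP1)--(KP4) hold because they hold for $(p,s)$ in $\KP_R(\L)$ and the quotient map is an $R$-algebra (hence, after descending, an $R/M$-algebra) homomorphism. The universal property of $\KP_{R/M}(\L)$ from Theorem~\ref{$KP_R$} then yields an $R/M$-algebra homomorphism $\pi:\KP_{R/M}(\L)\to\KP_R(\L)/\Ind M$ with $\pi(q_v)=p_v+\Ind M$, $\pi(t_\lambda)=s_\lambda+\Ind M$, $\pi(t_{\mu^*})=s_{\mu^*}+\Ind M$. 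Surjectivity is immediate, since $\KP_R(\L)$ is generated by $(p,s)$, so $\KP_R(\L)/\Ind M$ is generated by $(P,S)=\pi(q,t)$.

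**The main obstacle** is injectivity of $\pi$. The clean way to get it is the graded-uniqueness theorem (Theorem~\ref{gradeduniquess}): $\KP_R(\L)/\Ind M$ is $\Z^k$-graded because $\Ind M$ is a graded ideal — it is generated by the homogeneous elements $rs_\alpha s_{\beta^*}$ — and $\pi$ is a graded homomorphism because it sends $t_\alpha t_{\beta^*}$ to a homogeneous element of the right degree. So by Theorem~\ref{gradeduniquess} applied to $\KP_{R/M}(\L)$, it suffices to show $\pi((r+M)q_v)\neq 0$ whenever $r+M\neq 0$ in $R/M$, i.e.\ whenever $r\notin M$. But $\pi((r+M)q_v)=rp_v+\Ind M$, and this is zero exactly when $rp_v\in\Ind M$; by Lemma~\ref{helper} that forces $r\in M$, contrary to assumption. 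Hence $\pi$ is injective, and therefore an isomorphism. The only delicate points are the verification that $\Ind M$ is genuinely graded (so that the quotient is a $\Z^k$-graded ring in the sense required by Theorem~\ref{gradeduniquess}) and the careful bookkeeping that $\pi$ respects the $R/M$-structure rather than merely being a ring homomorphism; both are routine given the spanning description of $\Ind M$ and the grading established in Theorem~\ref{$KP_R$}.
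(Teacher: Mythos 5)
Your proposal is correct and follows essentially the same route as the paper: check the $R/M$-action is well defined using $M\cdot\KP_R(\L)\subset\Ind M$, observe that $(p+\Ind M,s+\Ind M)$ is a Kumjian-Pask family so the universal property gives a surjective $\pi$, and then deduce injectivity from the graded-uniqueness theorem, using that $\Ind M$ is spanned by homogeneous elements and that Lemma~\ref{helper} rules out $rp_v\in\Ind M$ for $r\notin M$. Your statement of the nonvanishing hypothesis ($\pi((r+M)q_v)\neq 0$ for all $r\notin M$) is in fact spelled out slightly more carefully than in the paper, but the argument is the same.
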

\begin{proof}
To see the action of $R/M$ is well-defined, note that if $r+M=s+M$
and $x+\Ind M=y+\Ind M$, then 
\[
rx-sy=r(x-y)+(r-s)y\in R \cdot \Ind M+M \cdot
\KP_R(\L)\subset \Ind M,
\] as required.

The set $(p+\Ind M,s+\Ind M)$ is a Kumjian-Pask family in $\KP_R(\L)/\Ind M$, and thus the universal
property of $\KP_{R/M}(\L)$ (Theorem~\ref{$KP_R$}) gives a homomorphism $\pi$ taking $(q,t)$ to $(p+\Ind M, s+\Ind M)$; $\pi$ is surjective because $(p,s)$ generates $\KP_R(\Lambda)$. The ideal  $\Ind M$ is spanned by homogeneous elements, and hence is graded;  then $\KP_R(\L)/\Ind M$ is graded by the images $q(\KP_R(\L)_n)$ under the quotient map $q$. The homomorphism $\pi$ is then a graded homomorphism. Since $M$ is proper, Lemma~\ref{helper} implies that no vertex projection $p_v$ belongs to $\Ind M$, and hence each vertex projection $p_v+\Ind M$ in the quotient is nonzero. Thus the graded-uniqueness theorem implies that $\pi$ is injective.
\end{proof}

\begin{proposition}\label{res-ind} Let $\Lambda$ be a row-finite $k$-graph without sources, and let $R$ be a commutative ring with $1$.
\begin{enumerate}
\item\label{res-ind1}   We have $\Res\circ \Ind=\id$. In particular, $\Ind$ is injective.
\item\label{res-ind2}   Suppose that $\L$ is aperiodic and cofinal.  Then   $\Ind\circ\Res=\id$, and $\Ind:\Ll(R)\to \Ll(\KP_R(\Lambda))$ is a lattice isomorphism with inverse $\Res$.
\end{enumerate}
\end{proposition}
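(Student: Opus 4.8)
The plan is to establish the two identities by direct computation, using the normal-form machinery (Lemma~\ref{normalform-lem}) and the ``compression'' result Proposition~\ref{idealvertex} that was proved under the aperiodicity hypothesis. For part~\ref{res-ind1}, I would take an ideal $M$ of $R$ and show $\Res(\Ind M)=M$ by two inclusions. The inclusion $M\subseteq\Res(\Ind M)$ is immediate: if $r\in M$ then $rp_v=rs_vs_{v^*}\in\Ind M$ for every $v\in\L^0$, so $r\in\Res(\Ind M)$. For the reverse inclusion, suppose $r\in\Res(\Ind M)$. Then $rp_v\in\Ind M$ for every $v\in\L^0$; picking any $v$ and applying Lemma~\ref{helper} gives $r\in M$. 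Hence $\Res(\Ind M)=M$, and since a left inverse exists, $\Ind$ is injective.

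For part~\ref{res-ind2}, assume $\L$ is aperiodic and cofinal. In view of part~\ref{res-ind1} it remains to prove $\Ind(\Res I)=I$ for every ideal $I$ of $\KP_R(\L)$. The inclusion $\Ind(\Res I)\subseteq I$ is the easy direction: if $r\in\Res I$ then $rp_v\in I$ for all $v$, so for $\alpha,\beta\in\L$ we have $rs_\alpha s_{\beta^*}=s_\alpha(rp_{s(\alpha)})s_{\beta^*}\in I$ when $s(\alpha)=s(\beta)$, and $rs_\alpha s_{\beta^*}=0$ otherwise (using (KP2)--(KP3)); since such elements span $\Ind(\Res I)$, we get $\Ind(\Res I)\subseteq I$. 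The substantive direction is $I\subseteq\Ind(\Res I)$: take a nonzero $x\in I$, write it in normal form via Lemma~\ref{normalform-lem} as $x=\sum_{(\alpha,\beta)\in F}r_{\alpha,\beta}s_\alpha s_{\beta^*}$, and apply Proposition~\ref{idealvertex} to produce $\sigma,\tau\in\L$, $(\delta,\gamma)\in F$ and $w\in\L^0$ with $s_{\sigma^*}xs_\tau=r_{\delta,\gamma}p_w$. Since $x\in I$ and $I$ is an ideal, $r_{\delta,\gamma}p_w\in I$. I claim this forces $r_{\delta,\gamma}\in\Res I$: for any $v\in\L^0$, cofinality together with Lemma~\ref{cofinalimplieshersat} shows that the saturated hereditary set $H_{I,r_{\delta,\gamma}}$ of Lemma~\ref{hersatboth} is either $\emptyset$ or $\L^0$; since $w\in H_{I,r_{\delta,\gamma}}$ it is nonempty, hence equals $\L^0$, so $r_{\delta,\gamma}p_v\in I$ for all $v$, i.e.\ $r_{\delta,\gamma}\in\Res I$.

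Now I would run an induction on $|F|$ to conclude $x\in\Ind(\Res I)$. Having shown $r_{\delta,\gamma}\in\Res I$, the single monomial $r_{\delta,\gamma}s_\delta s_{\gamma^*}$ lies in $\Ind(\Res I)$, hence also in $I$; subtracting it from $x$ gives an element $x'=x-r_{\delta,\gamma}s_\delta s_{\gamma^*}\in I$ which, after recombining terms, is again in normal form but supported on a strictly smaller index set (here one must be slightly careful that subtracting does not reintroduce the pair $(\delta,\gamma)$ with a nonzero coefficient, which it does not since we removed exactly its coefficient). By the inductive hypothesis $x'\in\Ind(\Res I)$, and therefore $x=x'+r_{\delta,\gamma}s_\delta s_{\gamma^*}\in\Ind(\Res I)$; the base case $F=\emptyset$ is $x=0$. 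This proves $I\subseteq\Ind(\Res I)$, so $\Ind\circ\Res=\id$. Finally, both $\Res$ and $\Ind$ are visibly inclusion-preserving, so together with $\Res\circ\Ind=\id$ and $\Ind\circ\Res=\id$ they are mutually inverse lattice isomorphisms. The main obstacle is the induction in the last step: one has to check that applying Proposition~\ref{idealvertex}, peeling off one monomial, and renormalizing genuinely decreases $|F|$ and keeps the remainder in $I$ — this is where a careful bookkeeping of the normal form is needed, though no deep idea beyond what is already in Proposition~\ref{idealvertex} and Lemma~\ref{normalform-lem}.
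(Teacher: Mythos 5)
Your proposal is correct and matches the paper's own argument essentially step for step: part (\ref{res-ind1}) via Lemma~\ref{helper}, and part (\ref{res-ind2}) via Proposition~\ref{idealvertex} together with Lemmas~\ref{hersatboth} and~\ref{cofinalimplieshersat}, followed by peeling off one monomial at a time (the paper phrases your induction on $|F|$ as ``repeating the process $|F|-1$ times''). The bookkeeping worry you flag is harmless for exactly the reason you note, and the concluding lattice-isomorphism observation is the same as in the paper.
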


\begin{proof}  \eqref{res-ind1}  Let $M$ be an ideal of $R$. We will show that $\Res\circ\Ind(M)=M$, and the
injectivity of $\Ind$ then follows. If $m\in M$ then $mp_v\in\Ind M$ for all $v\in\L^0$, and hence $m\in\Res\circ\Ind M$.   Thus $M\subset \Res\circ\Ind M$.  
For the reverse inclusion, let $t\in \Res\circ\Ind M$. Then $tp_v\in \Ind M$ for $v\in\L^0$ and hence $t\in M$ by Lemma~\ref{helper}.

\eqref{res-ind2} 
Let $I$ be a nonzero ideal of $\KP_R(\L)$. We will show that $\Ind\circ\Res I=I$, and the surjectivity of $\Ind$ then follows. Let   $0\neq x\in I$. We write $x$ in normal form
$\sum_{(\alpha,\beta)\in F} r_{\alpha,\beta}s_{\alpha}s_{\beta^*}$  (see Lemma~\ref{normalform-lem}). 
Since $\Lambda$ is aperiodic, 
by Proposition~\ref{idealvertex} there exist  $\sigma,\tau\in \L$ and
$(\delta,\gamma)\in F$ such that $s_{\sigma^*}x s_\tau=r_{\delta,\gamma}p_w$ for some $w\in \L^0$. 
Then $r_{\delta,\gamma} p_w\in I$, and thus $w$ is in the saturated hereditary subset  $H_{I, r_{\delta,\gamma}}$ of  Lemma~\ref{hersatboth}. Since $\Lambda$  is cofinal by hypothesis, Lemma~\ref{cofinalimplieshersat} implies that
$H_{I, r_{\delta,\gamma}}=\L^0$, so that $r_{\delta,\gamma}p_v\in I$ for all $v\in \L^0$.
In particular, $r_{\delta,\gamma}p_{r(\delta)}\in I$, and hence
\[y:=x-r_{\delta,\gamma}p_{r(\delta)}s_{\delta}s_{\gamma^*}=\sum_{(\alpha,\beta)\in F\setminus\{(\delta,\gamma)\}} r_{\alpha,\beta}s_{\alpha}s_{\beta^*}
\] 
belongs to $I$ and
is in normal form.  Repeating the above process $|F|-1$ times gives $r_{\alpha,\beta}p_v\in I$ for all $v\in \L^0$
and $(\alpha, \beta)\in F$.  Thus $r_{\alpha,\beta}\in \Res I$ for $(\alpha, \beta)\in F$, and hence $x\in\Ind\circ\Res I$.  Thus $I\subset \Ind\circ\Res I$.

For the reverse  inclusion, let $y\in \Ind\circ\Res I$.  Then $y=\sum r_{\alpha,\beta}s_\alpha s_{\beta^*}$ where each $r_{\alpha,\beta}\in\Res I$, that is, $r_{\alpha,\beta}p_v\in I$ for all $v\in\Lambda^0$.  But now  $y=\sum s_\alpha (r_{\alpha,\beta}p_{s(\alpha)}) s_{\beta^*}\in I$.  
Thus $\Ind\circ\Res I=I$, and $\Ind$ is surjective.  
Since $\Ind$ is  injective by \eqref{res-ind1}, and since $M_1\subset M_2$ if and only $\Ind M_1\subset \Ind M_2$, it follows that $\Ind$ is a lattice isomorphism. 
\end{proof}

\begin{proof}[Proof of Theorem~\ref{simplicity}]
First suppose that $\KP_R(\L)$ is simple.  Then $\KP_R(\L)$ is basically simple, and hence $\L$ is aperiodic and cofinal by Theorem~\ref{basicallysimple}. Let $M$ be a nonzero ideal of $R$.  Then $\Ind M$ is a nonzero ideal of $\KP_R(\L)$, and hence $\Ind M=\KP_R(\L)$.  By Proposition~\ref{res-ind}\eqref{res-ind1}, $M=\Res\circ\Ind M=\Res \KP_R(\Lambda)=R$.  Thus $R$ is a field.

Conversely, assume that $\Lambda$ is aperiodic and cofinal, and that $R$ is a field.  Let $I$ be a nonzero ideal of $\KP_R(\Lambda)$.  Since $\Lambda$ is aperiodic and cofinal, by Proposition~\ref{res-ind}\eqref{res-ind2} we have $I=\Ind\circ\Res I$. Thus $\Res I$ is a nonzero ideal of $R$, and hence $\Res I=R$ since $R$ is simple.  But now $I=\Ind R=\KP_R(\Lambda)$. Thus $\KP_R(\L)$ is simple.
\end{proof}

The next result is a converse for Proposition~\ref{res-ind}\eqref{res-ind2}.

\begin{proposition}\label{equivalences} 
Let $\Lambda$ be a row-finite $k$-graph without sources and let $R$ be a commutative ring with $1$. 
Then $\L$ is aperiodic and cofinal if and only if $\Ind\circ\Res=\id$.
\end{proposition}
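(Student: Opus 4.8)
The plan is to prove the two implications separately. The forward direction---if $\Lambda$ is aperiodic and cofinal then $\Ind\circ\Res=\id$---is precisely Proposition~\ref{res-ind}\eqref{res-ind2}, so nothing new is required there. For the converse I would argue by contraposition: assuming $\Lambda$ is periodic or not cofinal, I would exhibit a single nonzero ideal $I$ of $\KP_R(\Lambda)$ for which $\Ind\circ\Res I\neq I$, which shows $\Ind\circ\Res\neq\id$.

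The useful reduction is that $\Ind\{0\}=\{0\}$ (immediate from the definition of $\Ind$, since every term $r s_\alpha s_{\beta^*}$ with $r\in\{0\}$ vanishes), so it suffices to find a nonzero ideal $I$ with $\Res I=\{0\}$. I would then record the elementary fact that \emph{every nonzero basic ideal $I$ with $H_I\neq\Lambda^0$ has $\Res I=\{0\}$}: if $0\neq r\in\Res I$, then $rp_v\in I$ for all $v\in\Lambda^0$, and basicness of $I$ forces $p_v\in I$, i.e. $v\in H_I$, for every $v$, so $H_I=\Lambda^0$, a contradiction. Thus the whole converse comes down to producing, in each bad case, a nonzero basic ideal whose associated saturated hereditary set is not all of $\Lambda^0$.

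If $\Lambda$ is periodic, I would take $I=\ker\pi_{Q,T}$, the kernel of the infinite-path representation. Lemma~\ref{kernelofpathrep} shows $I\neq\{0\}$. By Theorem~\ref{$KP_R$} and the fact that every vertex is the range of an infinite path, $\pi_{Q,T}(rp_v)=rQ_v\neq0$ whenever $r\neq0$, so no $rp_v$ with $r\neq0$ lies in $I$; hence $I$ is (vacuously) basic and $H_I=\emptyset\neq\Lambda^0$. If instead $\Lambda$ is not cofinal, I would invoke the construction in the second half of the proof of Proposition~\ref{cofinalbasicideal}, which produces a nontrivial saturated hereditary set $H_x\subset\Lambda^0$ with $x(0)\notin H_x$, and take $I=I_{H_x}$; by Lemma~\ref{manyconditions} this is a nonzero basic ideal with $H_{I_{H_x}}=H_x\neq\Lambda^0$. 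In both cases the reduction of the previous paragraph gives $\Res I=\{0\}$, hence $\Ind\circ\Res I=\{0\}\neq I$, completing the contrapositive.

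I do not anticipate a genuine obstacle: the argument assembles results already in hand (Lemma~\ref{kernelofpathrep}, Proposition~\ref{cofinalbasicideal}, Lemma~\ref{manyconditions}, Theorem~\ref{$KP_R$}) around the one-line observation that a proper basic ideal has trivial restriction to $R$. The only points needing a moment's care are verifying that $\ker\pi_{Q,T}$ is basic (which is vacuous once $rQ_v\neq0$ is known for $r\neq0$) and that the ideals produced are nonzero, which in each case reduces to $p_v\neq0$ from Theorem~\ref{$KP_R$}.
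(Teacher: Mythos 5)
Your proof is correct, but it takes a different route from the paper's. The paper proves the converse directly rather than contrapositively: assuming $\Ind\circ\Res=\id$, it takes an arbitrary nonzero basic ideal $I$, notes that $\Res I$ must be nonzero (otherwise $I=\Ind\circ\Res I=\{0\}$), picks $0\neq r\in\Res I$ so that $rp_v\in I$ for all $v$, uses basicness to get $p_v\in I$ for all $v$ and hence $I=\KP_R(\L)$; this shows $\KP_R(\L)$ is basically simple, and Theorem~\ref{basicallysimple} then yields aperiodicity and cofinality. Your contrapositive argument pivots on exactly the same one-line observation (your ``a proper nonzero basic ideal has $\Res I=\{0\}$'' is its contrapositive form), but instead of citing Theorem~\ref{basicallysimple} you inline the constructions behind its proof: $\ker\pi_{Q,T}$ via Lemma~\ref{kernelofpathrep} in the periodic case, and $I_{H_x}$ via Lemma~\ref{manyconditions} and the construction in the proof of Proposition~\ref{cofinalbasicideal} in the non-cofinal case. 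Your supporting checks are sound: $\ker\pi_{Q,T}$ is vacuously basic with $H_I=\emptyset$ because $rQ_v\neq 0$ for $r\neq 0$, and $I_{H_x}$ is nonzero with $H_{I_{H_x}}=H_x\neq\L^0$ by Lemma~\ref{manyconditions} and Theorem~\ref{$KP_R$}. What the paper's route buys is brevity --- three lines given the machinery of \S\ref{sectionbasicsimplicity} --- while yours buys independence from Theorem~\ref{basicallysimple} (and from the ``if'' halves of Propositions~\ref{aperiodicbasicideal} and~\ref{cofinalbasicideal}, hence from the Cuntz-Krieger uniqueness theorem), at the cost of repeating constructions the paper has already packaged.
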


\begin{proof}
Proposition~\ref{res-ind}\eqref{res-ind2} is the ``only if'' half.
Suppose that  $\Ind\circ\Res=\id$. It suffices by  Theorem~\ref{basicallysimple} to prove that $\KP_R(\Lambda)$ is basically simple. So let $I$ be a nonzero basic ideal of  $\KP_R(\L)$.  Then $\Ind\circ\Res I=I$ implies that $\Res I$ is a nonzero ideal. Let $0\neq r\in\Res I$. Then $rp_v\in I$ for all $v\in\L^0$, and since $I$ is basic, $p_v\in I$ for all $v\in\L^0$, and $I=\KP_R(\Lambda)$.  Thus $\KP_R(\L)$ is basically simple, as required.
\end{proof}


\goodbreak

\section{Examples and applications}\label{sectionexample}

We begin with the easiest nontrivial example.

\begin{example}\label{morealgebras}
Let $R$ be a commutative ring with $1$.  View $\L=\N^2$ as a category with a single  object $v$, and let $d:\N^2\to \N^2$ be the identity map. Then $\L$ is the unique $2$-graph whose skeleton consists of one blue and one red loop at a single vertex. For each $n\in\N^2$ there is a unique path $n$ of degree $n$, and a Kumjian-Pask family $(P,S)$ in an $R$-algebra must satisfy
\begin{gather*}
P_v^2 = P_v=S_{n^*}S_{n}=S_nS_{n^*},\\
S_mS_n = S_{m+n},\ S_{n^*}S_{m^*} = S_{(m+n)^*},\\ 
P_vS_n = S_n = S_nP_v,\ 
P_vS_{n^*} = S_{n^*} = S_{n^*}P_v.
\end{gather*}
For $q\geq m\vee n$ in $\N^2$, the sum in Lemma~\ref{pathprods} has exactly one term, and we have $S_{m^*}S_{n}= S_{q-m}S_{(q-n)^*}$;
taking  $q = m+n$ gives $S_{m^*}S_{n} = S_{n}S_{m^*}$. In particular, $\KP_R(\Lambda)$ is commutative.
We will use the graded-uniqueness theorem to show that $\KP_R(\L)$ is isomorphic to  the ring $R[x, x^{-1}, y, y^{-1}]$ of
Laurent polynomials over $R$ in two commuting indeterminates $x$ and $y$.

Set $Q_v=1$, $T_{(i,j)}=x^iy^j$ and $T_{(i,j)^*}=x^{-i}y^{-j}$.  Then $(Q,T)$ is a Kumjian-Pask $\Lambda$-family in $R[x, x^{-1}, y, y^{-1}]$, and the universal property of $\KP_R(\Lambda)$ gives a homomorphism $\phi:\KP_R(\Lambda)\to R[x, x^{-1}, y, y^{-1}]$ such that $\phi\circ p=Q$ and $\phi\circ s= T$. The groups $A_{(i,j)}:=\lsp\{x^iy^j\}$ for $(i,j)\in\Z^2$  grade $R[x,x^{-1},y,y^{-1}]$ over
$\Z^2$, and $\phi$ maps $\KP_R(\L)_{(i,j)}=\lsp\{s_ns_{m^*}:n-m=(i,j)\}$ into
$A_{(i,j)}$, so $\phi$ is graded. Finally, $\phi(rp_v)=r\phi(p_v)=r1=r\neq 0$ for all $r\in R\setminus \{0\}$, and so Theorem~\ref{gradeduniquess} implies that $\phi$ is injective.  Since the image of $\phi$ contains a generating set for $R[x,x^{-1},y,y^{-1}]$, $\phi$ is an isomorphism.
 \end{example}

\begin{remark}\label{newexamples} Let $K$ be a field.  We claim that $K[x,x^{-1},y,y^{-1}]$ cannot be realized as a Leavitt path algebra $L_K(E)$ for any directed graph $E$. Thus  Example~\ref{morealgebras} shows that the class of Kumjian-Pask algebras over $K$ is larger than  the class of Leavitt path algebras over $K$.   To see the claim, recall from  \cite[Proposition 2.7]{AC} that every commutative Leavitt path algebra has
the form $(\bigoplus_{i\in I} K) \oplus (\bigoplus_{j\in J} K[x,x^{-1}])$. Since $K[x,x^{-1},y,y^{-1}]$
has no zero divisors, if  $K[x,x^{-1},y,y^{-1}]$ had this form then it would be isomorphic to either $K$
or $K[x,x^{-1}]$ as rings.  But both $K$ and  $K[x,x^{-1}]$ are principal ideal domains, whereas $K[x,x^{-1},y,y^{-1}]$ is not. So $K[x,x^{-1},y,y^{-1}]$ is not the Leavitt path algebra of any directed graph. \end{remark}

\subsection{The Kumjian-Pask algebra and the $C^*$-algebra}\label{discussuongiut} We have said that the graded-uniqueness theorem is an analogue for Kumjian-Pask algebras of the gauge-invariant uniqueness theorem for graph $C^*$-algebras. Indeed, an original motivation for graded-uniqueness theorems was to prove that the Leavitt path algebra $L_{\CC}(E)$ embeds in the graph $C^*$-algebra $C^*(E)$, and the proof of this inevitably uses the gauge action alongside the grading of $\KP_{\CC}(\L)$. Since the existing treatments (\cite[Corollary~1.3.3]{RMalaga} and \cite[Theorem~7.3]{T}) are on the terse side, it seems worthwhile to give a careful treatment of the analogous result for Kumjian-Pask algebras. 

When the coefficient ring $R$ is the field $\CC$, the Kumjian-Pask algebra $\KP_{\CC}(\L)$ has a complex linear involution characterized in terms of the generating Kumjian-Pask family by $(cs_\lambda s_{\mu^*})^*=\bar c s_\mu s_{\lambda^*}$ for $c\in \CC$. (To see this, we \emph{define} $a\mapsto a^*$ on $\FF_{\CC}(w(X))$ by the analogous formula on infinite sums, check that this map is an involution on $\FF_{\CC}(w(X))$, and then observe that the ideal $I$ on page~\pageref{idealI} is $^*$-closed, so the involution passes to the quotient $\KP_{\CC}(\L)$.) Thus $\KP_{\CC}(\L)$ is a $*$-algebra. 

The $C^*$-algebra $C^*(\L)$ is generated by a universal Cuntz-Krieger family $(q,t)$ of the sort described at the start of \S\ref{sec:KPfam}. It is not completely obvious that such a $C^*$-algebra exists (though you'd never guess this to look at the literature!). But if we take the $*$-algebra $A$ generated by symbols $\{q_v,t_e\}$ subject to the relations, then because the elements $q_v$ and $t_e$ are all partial isometries, every generator has norm at most $1$ in every representation of $A$ as bounded operators on Hilbert space; we can then  define a semi-norm on $A$ by
\[
\|a\|=\sup\big\{\|\pi(a)\|: \pi:A\to B(H)\text{ is a $*$-representation of $A$}\big\},
\]
mod out by the ideal of elements of norm $0$ to get a normed algebra, and complete in the norm to get a $C^*$-algebra \cite[\S1]{Black}. To see that this $C^*$-algebra is nonzero, Kumjian and Pask built a Cuntz-Krieger family on $\ell^2(\L^\infty)$ in which every generator is nonzero, so in particular each $q_v$ is nonzero in $C^*(\L)$ \cite[Proposition~2.11]{KP}.

As we saw at the start of \S\ref{sec:KPfam}, the universal Cuntz-Krieger family $(q,t)$ in $C^*(\Lambda)$ is a Kumjian-Pask family with $t_{\lambda^*}:=t_\lambda^*$. Thus there is a canonical $*$-homomorphism $\pi_{q,t}:\KP_{\CC}(\L)\to C^*(\L)$ which takes $s_\lambda s_{\mu^*}$ to $t_\lambda t_\mu^*$.

\begin{prop}\label{KPembeds}
Suppose that $\Lambda$ is a row-finite $k$-graph without sources. Then $\pi_{q,t}$ is a $*$-isomorphism of $\KP_{\CC}(\L)$ onto the $*$-subalgebra
\[
A:=\lsp\big\{t_\lambda t_\mu^*:\lambda,\mu\in\L\big\}.
\]
\end{prop}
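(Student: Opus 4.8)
The plan is to obtain injectivity of $\pi_{q,t}$ from the graded-uniqueness theorem (Theorem~\ref{gradeduniquess}). Since $C^*(\L)$ itself is not a $\Z^k$-graded \emph{ring} --- its spectral subspaces span only a dense subalgebra --- I would apply the theorem with the target ring taken to be $A$ itself, which I will equip with a genuine $\Z^k$-grading coming from the gauge action. Surjectivity of $\pi_{q,t}$ onto $A$ is immediate: $A$ is by definition the span of the elements $t_\lambda t_\mu^*$, and each of these is the image of $s_\lambda s_{\mu^*}$.

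First I would recall that $C^*(\L)$ carries a strongly continuous gauge action $\gamma\colon\TT^k\to\Aut(C^*(\L))$: for each $z\in\TT^k$ the family $(q_v,z^{d(\lambda)}t_\lambda)$ is again a Cuntz-Krieger $\L$-family (the four relations are preserved because $d$ is additive and $d(v)=0$), so the universal property of $C^*(\L)$ produces an endomorphism $\gamma_z$; one checks on generators that $z\mapsto\gamma_z$ is a homomorphism, hence each $\gamma_z$ is an automorphism with inverse $\gamma_{\bar z}$, and that it is continuous. Writing $C^*(\L)_n=\{a:\gamma_z(a)=z^na\ \text{for all}\ z\in\TT^k\}$ for the spectral subspaces, and $A_n=A\cap C^*(\L)_n$, I would check that: each generator $t_\lambda t_\mu^*$ of $A$ lies in $C^*(\L)_{d(\lambda)-d(\mu)}$, so $A=\sum_{n\in\Z^k}A_n$; $A_nA_m\subset A_{n+m}$; and the sum is direct, because if $\sum_{n\in F}a_n=0$ with $a_n\in A_n$ then averaging gives $a_m=\int_{\TT^k}\bar z^m\gamma_z\big(\sum_{n\in F}a_n\big)\,dz=0$ for each $m\in F$. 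Hence $\{A_n:n\in\Z^k\}$ is a $\Z^k$-grading of the ring $A$.

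It then remains to verify the two hypotheses of Theorem~\ref{gradeduniquess} for $\pi_{q,t}\colon\KP_\CC(\L)\to A$. The map is graded: by~\eqref{spanning}, $\KP_\CC(\L)_n$ is spanned by the $s_\lambda s_{\mu^*}$ with $d(\lambda)-d(\mu)=n$, and $\pi_{q,t}(s_\lambda s_{\mu^*})=t_\lambda t_\mu^*\in A_n$. And $\pi_{q,t}(rp_v)=rq_v$, which is nonzero for $r\in\CC\setminus\{0\}$ because $q_v\neq0$ in $C^*(\L)$ (as witnessed by Kumjian and Pask's Cuntz-Krieger family on $\ell^2(\L^\infty)$) and $\CC$ is a field. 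Theorem~\ref{gradeduniquess} then gives that $\pi_{q,t}$ is injective, hence a ring isomorphism onto $A$; since $\pi_{q,t}$ is also a $*$-homomorphism (as recorded just before the statement), it is a $*$-isomorphism onto $A$.

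The only real work is the second paragraph --- producing a bona fide $\Z^k$-grading on $A$, and in particular establishing the directness of $\sum_n A_n$, which is where the integration-over-$\TT^k$ argument is needed. The existence of $C^*(\L)$ with all $q_v\neq0$, the existence and continuity of the gauge action, and the verification that $\pi_{q,t}$ respects the two gradings are all routine.
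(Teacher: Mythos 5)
Your proposal is correct and follows essentially the same route as the paper: the paper's Lemma~\ref{coregraded} uses the gauge action and integration over $\TT^k$ to put a genuine algebraic $\Z^k$-grading on the dense $*$-subalgebra $A$, and then the proof of Proposition~\ref{KPembeds} applies the graded-uniqueness theorem to $\pi_{q,t}$ viewed as a graded map onto $A$, using $q_v\neq 0$ and the fact that $\CC$ is a field. Your only cosmetic deviation is defining $A_n$ as $A\cap C^*(\L)_n$ rather than as the span of the homogeneous generators, which changes nothing in the argument.
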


To prove this, one reaches for the graded-uniqueness theorem. However, $C^*(\L)$ is not graded in the algebraic sense: the subspaces
\begin{equation}\label{gradeKP}
C^*(\L)_n:=\clsp\big\{t_\lambda t_{\mu}^*:d(\lambda)=d(\mu)=n\big\}
\end{equation}
satisfy $C^*(\L)_mC^*(\Lambda)_n\subset C^*(\L)_{m+n}$, and are mutually linearly independent, but they do not span $C^*(\L)$ in the usual algebraic sense (see Remark~\ref{almostgrade} below). On the other hand, we have:

\begin{lemma}\label{coregraded} 
The subspaces
\[
A_n:=\lsp\big\{t_\lambda t_{\mu}^*:d(\lambda)=d(\mu)=n\big\}
\]
form a $\Z^k$-grading for the dense subalgebra $A$ of $C^*(\L)$.
\end{lemma}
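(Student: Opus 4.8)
The plan is to check the three defining properties of a $\Z^k$-grading of the subalgebra $A=\lsp\{t_\lambda t_\mu^*:\lambda,\mu\in\L\}$ relative to the subspaces $A_n$, with the indexing arranged so that the monomial $t_\lambda t_\mu^*$ lies in $A_{d(\lambda)-d(\mu)}$ (exactly as for the grading \eqref{spanning} of $\KP_R(\L)$): (i) the $A_n$ span $A$; (ii) $A_mA_n\subset A_{m+n}$; and (iii) every element of $A$ has a unique expression as a finite sum of nonzero homogeneous elements. Property (i) is immediate from the definitions, since each generating monomial of $A$ already lies in a single $A_n$. So the content is in (ii) and (iii), and the engine behind (iii) will be the gauge action of $\TT^k$ on $C^*(\L)$.

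For (ii) I would reuse the degree bookkeeping from the algebraic setting. The universal Cuntz-Krieger family $(q,t)$ becomes a Kumjian-Pask $\L$-family once we set $t_{\lambda^*}:=t_\lambda^*$, so Lemma~\ref{pathprods} applies to it. For $t_\lambda t_\mu^*\in A_m$, $t_\alpha t_\beta^*\in A_n$, and any $q'\geq d(\mu)\vee d(\alpha)$, it gives the finite sum
\[
t_\lambda t_\mu^*\,t_\alpha t_\beta^*=\sum_{d(\mu\sigma)=q',\ \mu\sigma=\alpha\tau}t_{\lambda\sigma}\,t_{\beta\tau}^*,
\]
and since $\mu\sigma=\alpha\tau$ forces $d(\mu)+d(\sigma)=d(\alpha)+d(\tau)$, each term has degree $d(\lambda\sigma)-d(\beta\tau)=(d(\lambda)-d(\mu))+(d(\alpha)-d(\beta))=m+n$. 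By bilinearity $A_mA_n\subset A_{m+n}$, which in passing also re-confirms that $A$ is a subalgebra.

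For (iii) I would construct the gauge action. For each $z\in\TT^k$ the rescaled family $(q,z\cdot t)$ with $(z\cdot t)_\lambda:=z^{d(\lambda)}t_\lambda$ is again a Cuntz-Krieger $\L$-family in $C^*(\L)$ — the scalars $z^{d(\lambda)}$, of modulus $1$, cancel in $t_\lambda^*t_\lambda$ and in each sum $\sum_{\lambda\in v\L^n}t_\lambda t_\lambda^*$, and combine correctly in $t_\lambda t_\mu=t_{\lambda\mu}$ — so the universal property of $C^*(\L)$ yields a $*$-endomorphism $\gamma_z$ with $\gamma_z(q_v)=q_v$ and $\gamma_z(t_\lambda)=z^{d(\lambda)}t_\lambda$, hence $\gamma_z(t_\lambda t_\mu^*)=z^{d(\lambda)-d(\mu)}t_\lambda t_\mu^*$. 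I would then define, for $n\in\Z^k$, the linear map $\Phi_n(a):=\int_{\TT^k}\bar z^{\,n}\gamma_z(a)\,dz$ against normalized Haar measure; for $a\in A$ this is a finite linear combination of terms $\big(\int_{\TT^k}\bar z^{\,n}z^{d(\lambda)-d(\mu)}\,dz\big)t_\lambda t_\mu^*$, so $\Phi_n(a)\in A_n$ and $\Phi_n(t_\lambda t_\mu^*)=\delta_{n,\,d(\lambda)-d(\mu)}\,t_\lambda t_\mu^*$. Thus $\Phi_n$ restricts to the identity on $A_n$ and to $0$ on $A_m$ for $m\neq n$, so from any decomposition $a=\sum_{n\in F}a_n$ with $a_n\in A_n$ we recover $a_m=\Phi_m(a)$; the decomposition is therefore unique. (One can avoid the integral altogether: $\sum_{n\in F}z^na_n=\gamma_z(0)=0$ for all $z$, and choosing finitely many $z$ at which the matrix $(z^n)_{n\in F}$ is invertible forces every $a_n=0$.)

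The point to watch — the reason this is not completely formal — is that it cannot be shortcut by transporting the known grading \eqref{spanning} of $\KP_{\CC}(\L)$ through $\pi_{q,t}$, because Proposition~\ref{KPembeds} (that $\pi_{q,t}$ is an isomorphism) is itself proved \emph{using} this lemma; so the independence of the $A_n$ must be harvested from the external $\TT^k$-symmetry of $C^*(\L)$ rather than imported from $\KP_{\CC}(\L)$. Once the gauge action is in place, every remaining step is routine: (ii) is the same degree count as in $\KP_R(\L)$, and (i) is a tautology.
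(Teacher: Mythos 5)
Your proof is correct and takes essentially the same route as the paper's: spanning by grouping monomials, independence of the $A_n$ by integrating the gauge action against the characters $z\mapsto z^{-n}$ (your $\Phi_n$ is exactly the paper's averaging map), and $A_mA_n\subset A_{m+n}$ via Lemma~\ref{pathprods} applied to $(q,t)$ viewed as a Kumjian-Pask family. Your reading of the indexing, $t_\lambda t_\mu^*\in A_{d(\lambda)-d(\mu)}$, is the intended one, and your remark that one cannot import the grading through $\pi_{q,t}$ (since Proposition~\ref{KPembeds} relies on this lemma) correctly reflects the paper's logical order.
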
  

The proof of the lemma uses the \emph{gauge action}. For a directed graph $E$, the gauge action is an action of $\TT:=\{z\in \CC:|z|=1\}$ on $C^*(E)$; for a $k$-graph, it is an action $\gamma$ of the $k$-torus $\TT^k$ on $C^*(\Lambda)$. To define $\gamma_z$ for $z\in \TT^k$, invoke the universal property of $(C^*(\L),(q,t))$ to get a homomorphism $\gamma_z:C^*(\L)\to C^*(\L)$ such that $\gamma_z(q_v)=q_v$ and $\gamma_z(s_\lambda)=z^{d(\lambda)}s_\lambda$, and check that $z\mapsto \gamma_z$ is a homomorphism into $\Aut C^*(\L)$. Then it follows from an $\epsilon/3$ argument that $\gamma$ is strongly continuous in the sense that $z\mapsto \gamma_z(a)$ is continuous for each fixed $a\in C^*(\Lambda)$. (The details of the argument are in \cite[Proposition~2.1]{R} for $k=1$, and the argument carries over.)

Next we need to integrate continuous functions $f$ on $\TT^k$ with values in a $C^*$-algebra $B$. The easiest way to do this is to represent $B$ faithfully as bounded operators on a Hilbert space $H$, prove that there is a unique bounded operator $T$ on $H$ such that $(Th\,|\,k)$ is the usual Riemann integral $\int_{\TT^k} (f(z)h\,|\,k)\,dz:=\int_{[0,1]^k}\big(f(e^{2\pi i\theta})h\,|\,k\big)\,d\theta$ for $h,k\in H$, prove that $T$ belongs to $B$, and then define $\int_{\TT^k}f(z)\,dz:=T$. The construction and its properties are described in \cite[Lemma~3.1]{R} for the case $k=1$, and the general case is similar. The integral is, for example, linear and norm-decreasing for the sup-norm on $C(\TT^k,B)$.

\begin{proof}[Proof of Lemma~\ref{coregraded}]
Since each spanning element $t_\lambda t_\mu^*$ belongs to $A_{d(\lambda)-d(\mu)}$, we can by grouping terms write every $a\in A$ as a finite sum $\sum_n a_n$ with $a_n\in A_n$. To see that the $A_n$ are independent, suppose that $a_n\in A_n$ and $\sum_n a_n=0$. Elementary calculus shows that $\int_{\TT^k}z^m\,dz$ is $1$ if $m=0$ and vanishes otherwise, and hence for $m\in\Z^k$ we have
\begin{equation}\label{calchomogcoeff}
\int_{\TT^k}z^{-m}\gamma_z(t_\lambda t_\mu^*)\,dz=\Big(\int_{\TT^k}z^{-m+d(\lambda)-d(\mu)}\,dz\Big)t_\lambda t_\mu^*
=\begin{cases}
t_\lambda t_\mu^*&\text{if $m=d(\lambda)-d(\mu)$}\\
0&\text{otherwise}.
\end{cases}
\end{equation}
We deduce from linearity of the integral that if $a_n\in A_n$, then 
\[
\int_{\TT^k}z^{-m}\gamma_z(a_n)\,dz =\begin{cases}
a_m&\text{if $m=n$}\\
0&\text{otherwise}.
\end{cases}
\]
Now integrating both sides of $\sum_n a_n=0$ against $z^{-m}\gamma_z$ shows that $a_m=0$ for all $m$. An application of Lemma~\ref{pathprods} shows that if $t_\lambda t_\mu^*\in A_m$ and $t_\alpha t_\beta^*\in A_n$  then $(t_\lambda t_\mu^*)(t_\alpha t_\beta^*)\in A_{m+n}$, so $A_mA_n\subset A_{m+n}$.
\end{proof}

\begin{proof}[Proof of Proposition~\ref{KPembeds}]
The homomorphism $\pi_{q,t}$ takes $s_\lambda s_{\mu^*}$ to $t_\lambda t_\mu^*$, hence maps $\KP_{\CC}(\L)$ onto $A$ and is graded. Since we know that each $q_v$ is nonzero, and since we are working over a field, we have $\pi(rp_v)\not=0$ for every $r\not=0$ and every $v\in \L^0$. Thus the graded-uniqueness theorem implies that $\pi_{q,t}$ is injective.
\end{proof}

\begin{remark}\label{almostgrade}
The gauge action $\gamma$ was crucial in the proof of Lemma~\ref{coregraded} when we needed to recover the component $a_m$ from the expansion $\sum_na_n$, so it is certainly connected with the grading. To see why it does not give a grading of the whole $C^*$-algebra, consider an action $\beta:\TT^k\to \Aut B$ of $\TT^k$ on a $C^*$-algebra $B$, and for each $n\in \Z^k$, let
\[
B_n:=\{b\in B:\beta_z(b)=z^nb\text{ for all $z\in \TT^k$}\}.
\]
Then $B_n$ is a closed subspace of $B$, and $E_n: b\mapsto b_n:=\int_{\TT^k}z^{-n}\beta_z(b)\,dz$ is a norm-decreasing linear operator with range $B_n$ satisfying $E_n\circ E_n=E_n$. In the proof of Lemma~\ref{coregraded}, only finitely many $a_m$ are nonzero, but in general this is not the case, and we cannot expect to recover every $b\in B$ as a finite sum of elements in the $B_n$; the subspaces $B_n$ satisfy $B_mB_n\subset B_{m+n}$, but they do not grade $B$ in the algebraic sense. They are independent (because we can recover $b_m$ from a finite sum $\sum_nb_n$ by integrating), and they do determine $b$: if $b_n=0$ for all $n$, then $b=0$. 

One way to see this last point is to represent $B$ faithfully in $B(H)$, and then for each pair $h,k\in H$,
\[
(b_nh\,|\,k)=\int_{\TT^k}z^{-n}(\beta_z(b)h\,|\,k)\,dz
\]
is the $n$th Fourier coefficient of the continuous function $z\mapsto (\beta_z(b)h\,|\,k)$. Thus if $b_n=0$ for all $n$, all the Fourier coefficients of this function vanish, which implies that $(\beta_z(b)h\,|\,k)=0$ for all $z$, $h$ and $k$; taking $z=1$ shows that $(bh\,|\,k)=0$ for all $h,k$, and $b=0$. 

This last argument illustrates the difficulty. If $f$ is smooth, then the Fourier series of $f$ converges uniformly to $f$. When $f$ is just continuous, the Fourier coefficients still determine $f$, but it is not easy to recover $f$ from its Fourier series. 
\end{remark}

\begin{remark}\label{remarkgiut}
The gauge-invariant uniqueness theorem for $C^*(\Lambda)$ says that, if  $\pi:C^*(\L)\to B$ is a homomorphism (by which we mean a $*$-homomorphism) such that $\pi(q_v)\not=0$ for all $v$, and if there is a continuous action $\beta$ of $\TT^k$ on $B$ such that $\pi\circ\gamma_z=\beta_z\circ\pi$ for every $z\in \TT^k$, then $\pi$ is injective. 

For the gauge action $\gamma$ on $C^*(\L)$, we trivially have $A_n\subset C^*(\L)_n$, and since $A$ is dense in $C^*(\L)$, the norm continuity of the map $E_n:C^*(\L)\to C^*(\L)_n$ implies that $C^*(\L)_n$ is as described in \eqref{gradeKP}. One can then check that $\pi\circ\gamma_z=\beta_z\circ\pi$ for every $z\in \TT^k$ if and only if $\pi(C^*(\L)_n)\subset B_n$ for every $n\in \Z^k$. (In the ``if'' direction, the continuity of the homomorphisms $\pi\circ\gamma_z$ and $\beta_z\circ\pi$ allows us to get away with checking equality on the dense subalgebra $A$.) So we could if we wanted reformulate the gauge-invariant uniqueness theorem to look like a graded-uniqueness theorem.
\end{remark}

\subsection{Rank-2 Bratteli diagrams} Consider a $2$-graph $\Lambda$ without sources which is a rank-2 Bratteli diagram in the sense of \cite[Definition~4.1]{PRRS}. This means that the blue subgraph $B\L:=(\L^0,\L^{e_1},r,s)$ of the skeleton is a Bratteli diagram in the usual sense, so the vertex set $\L^0$ is the disjoint union $\bigsqcup_{n=0}^\infty V_n$ of finite subsets $V_n$, each blue edge goes from some $V_{n+1}$ to $V_n$, and the red subgraph $R\L:=(\L^0,\L^{e_2},r,s)$ consists of disjoint cycles whose vertices lie entirely in some $V_n$. For each blue edge $e$ there is a unique red edge $f$ with $s(f)=r(e)$, and hence by the factorization property there is a unique blue-red path $\Ff(e)h$ such that $\Ff(e)h=fe$. The map $\Ff:\L^{e_1}\to \L^{e_1}$ is a bijection, and induces a permutation of each finite set $\L^{e_1}V_n$. We write $o(e)$ for the order of $e$: the smallest $l>0$ such that $\Ff^l(e)=e$. 

\begin{prop}\label{rank2aper}
Suppose that $\L$ is a rank-2 Bratteli diagram. If $\L$ is cofinal and $\{o(e):e\in \L^{e_1}\}$ is unbounded, then $\L$ is aperiodic.
\end{prop}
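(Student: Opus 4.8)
The plan is to prove the contrapositive: if $\Lambda$ is cofinal and periodic, then $\{o(e):e\in\Lambda^{e_1}\}$ is bounded. Running the argument in the proof of Lemma~\ref{Lambdaperiodic}, periodicity of $\Lambda$ provides $v\in\Lambda^0$ and $m\neq n$ in $\N^2$ such that $x(m,\infty)=x(n,\infty)$ for every $x\in v\Lambda^\infty$ (and $v\Lambda^\infty\neq\emptyset$ by Lemma~\ref{lemma1infinitepaths}). Evaluating these two infinite paths at the object $0$ gives $x(m)=x(n)$; since $x(m)$ lies $m_1$ blue levels below $r(x)=v$ while $x(n)$ lies $n_1$ blue levels below, and the levels $V_p$ of the Bratteli diagram are disjoint, we get $m_1=n_1$. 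Write $m=(a,b)$ and $n=(a,c)$; as $m\neq n$ forces $b\neq c$, after interchanging $m$ and $n$ if necessary we may assume $b<c$, and we set $d:=c-b>0$.

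Next I would extract a bound on orders in the part of the graph below $v$. Fix $x\in v\Lambda^\infty$ and $j\geq0$, and let $e:=x\big((a+j,0),(a+j+1,0)\big)$. Comparing the blue edges of $x(m,\infty)$ and of $x(n,\infty)$ at blue coordinate $j$ gives
\[
x\big((a+j,b),(a+j+1,b)\big)=x\big((a+j,c),(a+j+1,c)\big).
\]
By the factorization property --- i.e.\ by the definition of the bijection $\mathcal F:\Lambda^{e_1}\to\Lambda^{e_1}$ and iteration of the basic square $fe=\mathcal F(e)h$ --- each side is obtained from $e$ by a power of $\mathcal F$ determined only by its red coordinate, so the two sides are $\mathcal F^{\varepsilon b}(e)$ and $\mathcal F^{\varepsilon c}(e)$ for a single sign $\varepsilon\in\{\pm1\}$. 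Hence $\mathcal F^{\varepsilon d}(e)=e$, so $o(e)\mid d$. As $x$ ranges over $v\Lambda^\infty$ and $j$ over $\N$, the edge $e$ ranges over all $e\in\Lambda^{e_1}$ for which there is a blue path of length $\geq a$ from $r(e)$ to $v$; so every blue edge whose range is joined to $v$ by a long enough blue path has order dividing $d$, hence order $\leq d$.

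The final step, where I would use cofinality to spread this bound to \emph{every} blue edge, is the heart of the proof. Given an arbitrary $e^*\in\Lambda^{e_1}$, one wants to put some member of its $\mathcal F$-orbit into the class handled by the previous step. The available tools are: the ranges of the $\mathcal F$-orbit of $e^*$ sweep out the whole red cycle through $r(e^*)$; $\mathcal F$ preserves $o$; $\Lambda$ has no sources, so $e^*$ extends to an infinite path $y$; and cofinality, which forces $y$ eventually to reach a vertex lying below $v$ in the above sense, so that the corresponding tail of $y$ is $\sigma^{(0,d)}$-invariant. Transporting this invariance back along $y$ --- legitimate because the red-cycle lengths met along the way divide $d$, a fact that the previous step, cofinality, and the Bratteli structure conspire to give --- yields a member of the $\mathcal F$-orbit of $e^*$ whose range is joined to $v$ by a long blue path, so $o(e^*)\mid d$. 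Hence $o(e)\leq d$ for every $e\in\Lambda^{e_1}$, contradicting the assumed unboundedness; so $\Lambda$ is aperiodic.

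I expect this last step to be the main obstacle, and the only genuinely delicate point: cofinality only says that an \emph{infinite} path \emph{eventually} enters the ``below $v$'' region, whereas we must bound the order of a blue edge that itself need not lie there, and overcoming this requires careful use of how $\mathcal F$ moves a blue edge around its red cycle together with the no-sources hypothesis. The first two steps, by contrast, are routine bookkeeping with the factorization property and the shift maps.
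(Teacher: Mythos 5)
Your first two steps are sound: the argument of Lemma~\ref{Lambdaperiodic} does produce $v$ and $m=(a,b)\neq n=(a,c)$ with $x(m,\infty)=x(n,\infty)$ for all $x\in v\L^\infty$, the disjointness of the levels forces $m_1=n_1$, and iterating the square relation $fe=\Ff(e)h$ shows that any blue edge occurring in some $x\in v\L^\infty$ at a position of blue coordinate at least $a$ satisfies $\Ff^{d}(e)=e$, so $o(e)$ divides $d:=c-b$. The gap is the third step, and the proposed repair does not close it. What you need is that for every blue edge $e^*$ at a sufficiently deep level, the red cycle through $r(e^*)$ contains a vertex $w$ with $v\L w\neq\emptyset$ (then a red path around the cycle from $r(e^*)$ to $w$, composed with a path from $w$ to $v$ and pumped around the cycle, exhibits $e^*$ inside some $x\in v\L^\infty$ at blue depth $\geq a$). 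Plain cofinality, applied to an infinite path $y$ extending $e^*$, only yields a vertex $w=y(n)$ strictly \emph{below} $e^*$ with $v\L w\neq\emptyset$. That makes the tail of $y$ beyond $w$ invariant under $\sigma^{(0,d)}$, but invariance of a tail imposes no constraint on the blue edges of $y$ above $w$: an infinite path may be prescribed arbitrarily above any fixed vertex subject only to composability, and $\Ff$ relates blue edges at the same blue coordinate, not at different blue coordinates. So there is no mechanism for ``transporting the invariance back along $y$''. Worse, the fact you invoke to licence the transport --- that the red-cycle lengths met between $r(e^*)$ and $w$ divide $d$ --- concerns exactly those cycles not yet known to reach $v$, so it is of the same nature as the statement being proved and cannot be assumed.

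The missing ingredient is the strengthened consequence of cofinality that the paper imports from page~158 of \cite{PRRS}: there exists $N$ such that for every $M\geq N$ and every red cycle $V_{M,i}$ at level $M$ one has $v\L V_{M,i}\neq\emptyset$. With that statement your contrapositive does go through: every blue edge at level $\geq\max(N,N_1+a)$ has order dividing $d$ by your second step (using that $\Ff$ preserves $o$ to adjust the red coordinate), while the blue edges at the finitely many lower levels are finite in number (each $V_M$ is finite and $\L$ is row-finite), so all orders are bounded, contradicting unboundedness. The paper avoids the contrapositive altogether: it uses the same \cite{PRRS} statement, together with unboundedness of $\{o(e)\}$, to pick a blue edge $g$ with $o(g)>n_2-m_2$ lying on a red cycle $V_{M,i}$ with $v\L V_{M,i}\neq\emptyset$, and then the single path $\lambda=\mu\alpha g$ witnesses \eqref{fpaperiodic} directly. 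Either way, this upgrading of cofinality is indispensable, and it is precisely the step your sketch lacks.
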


Proposition~\ref{rank2aper} follows from \cite[Theorem~5.1]{PRRS}, but since  \cite{PRRS} uses a different formulation of aperiodicity, we also have to invoke the equivalence of the different notions of aperiodicity \cite[Lemma~3.2]{RS}. However, the whole point of the finite-path formulation is that it should be easier to verify. So:

\begin{proof}[Proof of Proposition~\ref{rank2aper}]
Let $v\in \L^0$, say $v\in V_{N_1}$, and take $m\not= n$ in $\N^2$. If $m_1\not =n_1$, then any path $\lambda\in \L^{m\vee n}$ has $\lambda(m)\in V_{N_1+m_1}$ and $\lambda(n)\in V_{N_1+n_1}$, and hence satisfies the aperiodicity condition \eqref{fpaperiodic}. So we suppose that $m_1=n_1$, and without loss of generality that $n_2>m_2$. As in \cite{PRRS}, we further partition each $V_N=\bigsqcup_{i=1}^{c_N}V_{N,i}$ into the sets of vertices which lie on distinct red cycles. 

As in the proof of sufficiency in \cite[Theorem~5.1]{PRRS} (see page~158 of \cite{PRRS}), cofinality implies that there exists $N$ such that, for every $M_1\geq N$,  $v\L V_{M_1,i}$ is nonempty for all $i\leq c_{M_1}$, and such that there exist $M\geq \max(N, n_1+N_1)$, $i\leq c_M$ and $g\in V_{M,i}\L^{e_1}$ such that $o(g)\geq n_2-m_2$. Now choose $\mu\in v\L V_{M,i}$, let $\alpha$ be a red path with vertices in $V_{M,i}$, $d(\alpha)\geq (0,n_2)$, $r(\alpha)=s(\mu)$ and $s(\alpha)=r(g)$, and take $\lambda:=\mu\alpha g$. Then in particular $d(\lambda)\geq (n_1,n_2)=m\vee n$, and $r(\lambda)=v$. We then have
\[
\lambda(n+d(\lambda)-(m\vee n)-e_2,\;n+d(\lambda)-(m\vee n))=\lambda(d(\lambda)-e_2,\;d(\lambda))=g,
\]
whereas 
\begin{align*}
\lambda(m+d(\lambda)-(m\vee n)-e_2&,\;m+d(\lambda)-(m\vee n))\\
&=\lambda(d(\lambda)-(n_2-m_2+1)e_2,\;d(\lambda)-(n_2-m_2))\\
&=\Ff^{n_2-m_2}(g),
\end{align*}
which is not the same as $g$ because $o(g)>n_2-m_2$. Thus the larger segments in \eqref{fpaperiodic} cannot be equal, and we have shown that $\L$ is aperiodic.
\end{proof}

\begin{cor}\label{Bdsimple}
Suppose that $\L$ is a rank-2 Bratteli diagram and $K$ is a field. If $\L$ is cofinal and $\{o(e):e\in \L^{e_1}\}$ is unbounded, then $\KP_K(\L)$ is simple.
\end{cor}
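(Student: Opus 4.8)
The plan is simply to assemble pieces that are already in place, since this is an immediate corollary. First I would apply Proposition~\ref{rank2aper}: the graph $\L$ is a rank-2 Bratteli diagram, it is cofinal by hypothesis, and $\{o(e):e\in\L^{e_1}\}$ is unbounded, so that proposition tells us $\L$ is aperiodic. Thus $\L$ is both aperiodic and cofinal, and the coefficient ring $K$ is a field, so all three hypotheses of Theorem~\ref{simplicity} are met. That theorem then yields at once that $\KP_K(\L)$ is simple.

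There is no real obstacle here: the substantive work lives in Proposition~\ref{rank2aper} (the finite-path verification of aperiodicity for rank-2 Bratteli diagrams) and in Theorem~\ref{simplicity} (the reduction of simplicity to ``$R$ a field, and $\L$ aperiodic and cofinal''), and all that remains is to observe that the hypotheses line up verbatim. As an alternative route one could note that over a field every ideal of $\KP_K(\L)$ is basic, so the conclusion also follows by combining Proposition~\ref{rank2aper} with Theorem~\ref{basicallysimple}; but quoting Theorem~\ref{simplicity} directly is the shortest path.
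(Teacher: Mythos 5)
Your proposal is correct, and in fact it contains the paper's argument as your ``alternative route'': the paper proves the corollary by observing that over a field every ideal is basic, so basic simplicity coincides with simplicity, and then combining Proposition~\ref{rank2aper} with Theorem~\ref{basicallysimple}. Your primary route instead quotes Theorem~\ref{simplicity} directly, which is perfectly legitimate since that theorem is established in \S\ref{sectionsimple}, before the examples section; the hypotheses ($K$ a field, $\L$ aperiodic by Proposition~\ref{rank2aper}, $\L$ cofinal by assumption) do line up verbatim. The only difference worth noting is one of economy: Theorem~\ref{simplicity} is itself proved from Theorem~\ref{basicallysimple} together with the $\Res$/$\Ind$ lattice machinery of Proposition~\ref{res-ind}, so invoking it here carries along more apparatus than the corollary needs, whereas the paper's route (your alternative) reaches the conclusion from the basic-simplicity theorem alone plus the trivial remark that ideals over a field are automatically basic. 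Either way the proof is complete and correct.
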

 
\begin{proof}
Since $K$ is a field, basic simplicity is the same as simplicity, so the result follows from Proposition~\ref{rank2aper} and Theorem~\ref{basicallysimple}.
\end{proof}

Notice that in the next result we have specialized to the case $K=\CC$.

\begin{prop}\label{notpi}
Suppose that $\L$ is a rank-2 Bratteli diagram. If $\L$ is cofinal and $\{o(e):e\in \L^{e_1}\}$ is unbounded, then $\KP_\CC(\L)$ is not purely infinite in the sense of \cite{AGP}.
\end{prop}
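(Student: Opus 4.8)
The plan is to show that each vertex idempotent $p_v$ is a \emph{finite} idempotent of $\KP_\CC(\L)$, which is incompatible with pure infiniteness. Since $\L$ is cofinal and $\{o(e):e\in\L^{e_1}\}$ is unbounded, Proposition~\ref{rank2aper} gives that $\L$ is aperiodic, and then Corollary~\ref{Bdsimple} (equivalently, Theorem~\ref{basicallysimple}, since $\CC$ is a field) shows that $\KP_\CC(\L)$ is simple; it is not a division ring, as the $p_v$ form an infinite family of mutually orthogonal nonzero idempotents. If $\KP_\CC(\L)$ were purely infinite in the sense of \cite{AGP}, then, being simple, it would be purely infinite simple, and it is a standard fact that every nonzero idempotent of such a ring is infinite (the corner $p_v\KP_\CC(\L)p_v$ is again purely infinite simple and unital but not a division ring, hence directly infinite, so that $p_v$ is an infinite idempotent). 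So it suffices to derive a contradiction from the assumption that some $p_v$ is an infinite idempotent of $\KP_\CC(\L)$.

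Here I bring in the $C^*$-algebra. By Proposition~\ref{KPembeds}, $\pi_{q,t}$ identifies $\KP_\CC(\L)$ with the dense $*$-subalgebra $A$ of $C^*(\L)$ and carries $p_v$ to the vertex projection $q_v$. Since $\L$ is a rank-$2$ Bratteli diagram without sources, $C^*(\L)$ is an AT-algebra by \cite{PRRS}; AT-algebras are stably finite, so every projection of $C^*(\L)$ is finite. In particular $q_v$ is a finite projection, which is to say that $q_vC^*(\L)q_v$ contains no non-unitary isometry.

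Now suppose $p_v$ is infinite in $\KP_\CC(\L)$, so that there are $s,t$ in the corner $p_v\KP_\CC(\L)p_v$ with $st=p_v$ and $ts\neq p_v$. Applying $\pi_{q,t}$ gives $x,y\in q_vC^*(\L)q_v$ with $xy=q_v$ and $yx\neq q_v$. Since $xy=q_v$, the operator $x$ carries $q_vH$ onto $q_vH$, so $x^*$ is bounded below on $q_vH$ and $xx^*$ is invertible in the unital $C^*$-algebra $q_vC^*(\L)q_v$; then $u:=(xx^*)^{-1/2}x$ satisfies $uu^*=q_v$, so $u^*$ is an isometry in $q_vC^*(\L)q_v$. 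If $u^*$ were unitary, then $x=(xx^*)^{1/2}u$ would be invertible in the corner, which would force $y=x^{-1}$ and hence $yx=q_v$, contrary to assumption; thus $u^*$ is a non-unitary isometry in $q_vC^*(\L)q_v$, contradicting finiteness of $q_v$. Hence no $p_v$ is infinite, and $\KP_\CC(\L)$ is not purely infinite.

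The step I expect to be the main obstacle is the transfer of infiniteness from $\KP_\CC(\L)$ to $C^*(\L)$: the Murray--von Neumann data witnessing infiniteness of $p_v$ is implemented by arbitrary ring elements rather than by partial isometries, so the polar-decomposition manoeuvre in the corner $q_vC^*(\L)q_v$ is what is really needed to produce a genuine non-unitary isometry. The other substantive input is the structural fact, imported from \cite{PRRS}, that $C^*(\L)$ is AT and hence stably finite.
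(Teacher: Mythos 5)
Your argument is correct, but it takes a genuinely different route from the paper's. The paper never leaves the corner $P_0\KP_{\CC}(\L)P_0$ with $P_0=\sum_{v\in V_0}p_v$: using only the definition of pure infiniteness together with the fact that corners of purely infinite simple rings are purely infinite simple \cite[Proposition~10]{AA2}, it supposes that corner contains an infinite idempotent, observes that the finitely many witnessing elements map under $\pi_{q,t}$ (Proposition~\ref{KPembeds}) into a depth-$N$ subalgebra, identifies that subalgebra with a finite direct sum of algebras $C(\TT,M_X(\CC))$ using only the local results \cite[Lemmas~4.5 and~4.8]{PRRS} and the gauge-invariant uniqueness theorem of \cite{RSY}, and then kills the infinite idempotent by a pointwise rank count. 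You instead reduce to the finiteness of a single vertex idempotent, which needs the extra ring-theoretic input that in a purely infinite simple ring every nonzero idempotent is infinite: as literally written, ``unital but not a division ring, hence directly infinite'' is not an implication on its own, but the fact is standard and easy to justify from pure infiniteness of the corner (the corner contains an infinite idempotent, and an infinite idempotent in a unital ring forces the identity to be infinite; alternatively quote the characterization in \cite{AGP}). You then import the global structure theorem of \cite{PRRS} that $C^*(\L)$ is an A$\TT$-algebra, hence has no infinite projections, and your polar-decomposition step turning $xy=q_v$, $yx\neq q_v$ into a non-unitary isometry of $q_vC^*(\L)q_v$ is exactly the right device; note that it is the injectivity of $\pi_{q,t}$ that lets you assert $yx\neq q_v$ in the first place. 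The trade-off: your proof is shorter and conceptually crisp, at the price of citing the global A$\TT$ theorem, stable finiteness (quasidiagonality) of A$\TT$-algebras, and the purely-infinite-simple-ring facts from \cite{AGP} and \cite{AA2}; the paper's proof is more self-contained, needing only the local lemmas of \cite{PRRS} and no $C^*$-finiteness theory, and its rank argument treats an arbitrary idempotent directly, so no reduction to a genuine projection is required. One small point to verify when citing: if the statement available in \cite{PRRS} is that the full corner $P_0C^*(\L)P_0$ (rather than $C^*(\L)$ itself) is A$\TT$, simply take $v\in V_0$ --- your argument only needs one vertex projection $q_v$ to be finite.
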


\begin{proof}
Let $P_0:=\sum_{v\in V_0}p_v$. Since $\KP_{\CC}(\L)$ is simple by Corollary~\ref{Bdsimple}, and since the property of being purely infinite simple passes to corners \cite[Proposition~10]{AA2}, it suffices for us to prove that $P_0\KP_{\CC}(\L)P_0$ is not purely infinite. We will show that $P_0\KP_{\CC}(\L)P_0$ does not contain an infinite idempotent. Suppose it does. Then there exist nonzero idempotents $p$, $p_1$, $p_2$ and elements $x$, $y$ in $P_0\KP_{\CC}(\L)P_0$ such that
\begin{equation}\label{infinrels}
p=p_1+p_2,\quad p_1p_2=p_2p_1=0,\quad xy=p\quad \text{and}\quad yx=p_1.
\end{equation}
Choose $N\in \N$ large enough to ensure that all five elements can be written as linear combinations of elements $s_\lambda s_{\mu^*}$ for which $s(\lambda)$ and $s(\mu)$ are in $\bigcup_{n=0}^N V_n$. Then the images of these elements under the isomorphism $\pi_{q,t}$ of Proposition~\ref{KPembeds} all lie in the subalgebra of $P_0C^*(\L)P_0$ spanned by the corresponding $t_\lambda t_{\mu}^*$, which by \cite[Lemma~4.8]{PRRS} is isomorphic to $P_0C^*(\Lambda_N)P_0$, where $\L_N$ is the ``rank-2 Bratteli diagram of depth $N$'' consisting of all the paths which begin and end in $\bigcup_{n=0}^N V_n$. 

Applying the Kumjian-Pask relations shows that
\[
C^*(\Lambda_N)=\clsp\{s_\lambda s_\mu^*:s(\lambda)=s(\mu)\in V_N\}.
\]
If $s(\lambda)=s(\mu)$ and $s(\alpha)=s(\beta)$ lie on different red cycles (that is, belong to different $V_{N,i}$), then $(s_\lambda s_\mu^*)(s_\alpha s_\beta^*)=0$, and hence $C^*(\L_N)$ is the $C^*$-algebraic direct sum of the subalgebras
\[
C_{N,i}=\clsp\{s_\lambda s_\mu^*:s(\lambda)=s(\mu)\in V_{N,i}\}.
\]
The blue Kumjian-Pask relation implies that the algebras $C_{N,i}$ are unital with identity $P_i:=\sum_{\alpha\in \L^{\N e_1}V_{N,i}}s_\alpha s_\alpha^*$, and indeed $C_{N,i}=P_iC^*(\L_N)P_i$. Since $P_i$ commutes with $P_0$, we then have
\[
P_0C^*(\L_N)P_0=\bigoplus_{i=1}^{c_N}P_0C_{N,i}P_0.
\]
The elements $p$, $p_1$, $p_2$, $x$ and $y$ of $P_0(C^*(\L_N)P_0$ all have direct sum decompositions, and the summands all satisfy the relations \eqref{infinrels}; in at least one summand, the component of $p_2$ is nonzero, and then the same components of all the rest must be nonzero too. So we may assume that $p$, $p_1$, $p_2$, $x$ and $y$ all belong to $P_0C_{N,i}P_0$.

Now consider the subgraph $\L_{N,i}$ of $\L_N$ with vertex set $r(s^{-1}(V_{N,i}))$. This $2$-graph has sources, but it is locally convex in the sense of \cite{RSY}, and the gauge-invariant uniqueness theorem proved there implies that the inclusion is an isomorphism of $P_0C^*(\L_{N,i})P_0$ onto $P_0C_{N,i}P_0$. The sources in $\L_{N,i}$ all lie on a single red cycle, and hence Lemma~4.5 of \cite{PRRS} implies that $P_0C_{N,i}P_0$ is isomorphic to $M_X(C(\TT))=C(\TT, M_X(\CC))$, where $X$ is the finite set $\Lambda^{Ne_1}V_N=V_0\L^{\N e_1}V_N$. Pulling the five elements through all these isomorphisms gives us nonzero idempotents $q$, $q_1$, $q_2$ and elements $f$, $g$ in $C(\TT,M_X(\CC))$ such that
\[
q=q_1+q_2,\quad q_1q_2=q_2q_1=0,\quad fg=q\quad \text{and}\quad gf=q_1.
\]
Now let $z\in\TT$. Then the equations $f(z)g(z)=q(z)$ and $g(z)f(z)=q_1(z)$ imply that $g(z)$ is an isomorphism of $q(z)\CC^X$ onto $q_1(z)\CC^X$, so the matrices $q(z)$ and $q_1(z)$ have the same rank. On the other hand, since $q_1(z)$ and $q_2(z)$ are orthogonal, $\rank(q_1(z)+q_2(z))=\rank q_1(z)+\rank q_2(z)$. Now  $q=q_1+q_2$ implies that $\rank q_2(z)=0$ for all $z$, which contradicts the assumption that $p_2$ is nonzero. Thus there is no infinite idempotent  in $P_0\KP_{\CC}(\L)P_0$, as claimed. Thus $P_0\KP_{\CC}(\L)P_0$ is not purely infinite, and neither is $\KP_{\CC}(\L)$. 
\end{proof}

Rank-2 Bratteli diagrams were invented in \cite{PRRS} to prove that the dichotomy of \cite{KPR} for simple graph $C^*$-algebras does not extend to the $C^*$-algebras of higher-rank graphs. We can now use them to see that the dichotomy of \cite[Theorem~4.4]{AA3} for simple Leavitt path algebras does not extend either.

\begin{theorem}
Suppose that $\L$ is a rank-2 Bratteli diagram, that $\L$ is cofinal, and that $\{o(e):e\in \L^{e_1}\}$ is unbounded. Then $\KP_{\CC}(\L)$ is simple but is neither purely infinite nor locally matricial.
\end{theorem}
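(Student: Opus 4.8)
The plan is that simplicity is immediate from Corollary~\ref{Bdsimple} and the failure of pure infiniteness is Proposition~\ref{notpi}, so the only new point is that $\KP_{\CC}(\L)$ is not locally matricial. I would obtain this from the stronger statement that $\KP_{\CC}(\L)$ is not von Neumann regular: a locally matricial $\CC$-algebra is a directed union of finite products of matrix algebras $M_n(\CC)$, hence a directed union of von Neumann regular rings, and von Neumann regularity is preserved under such unions.

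To exhibit a non-regular element, fix $v\in\L^0$ and let $\mu$ be the red cycle through $v$, say of degree $ce_2$ with $c\geq1$; since the red subgraph of a rank-2 Bratteli diagram is a disjoint union of cycles covering $\L^0$, such a $\mu$ exists, $s(\mu)=r(\mu)=v$, and $v\L^{ce_2}=\{\mu\}$. Then (KP3) and (KP4) give $s_\mu s_{\mu^*}=s_{\mu^*}s_\mu=p_v$, and in particular $s_{\mu^n}=s_\mu^n\neq0$ for every $n\geq1$ because $s_{(\mu^n)^*}s_{\mu^n}=p_v\neq0$ by Theorem~\ref{$KP_R$}. Put $x:=p_v-s_\mu$. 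Since $x=p_vxp_v$, if $x=xyx$ for some $y\in\KP_{\CC}(\L)$ then, replacing $y$ by $p_vyp_v$, we may assume $y$ lies in the unital ring $B:=p_v\KP_{\CC}(\L)p_v$ with identity $p_v$. The key claims, both proved using the $\Z^k$-grading from Theorem~\ref{$KP_R$}, are: (i) $x$ has zero left and zero right annihilator in $B$; and (ii) $x$ is not invertible in $B$. Granting these, $x=xyx$ forces $yx-p_v$ to lie in the right annihilator of $x$ and $xy-p_v$ in the left annihilator, so $yx=xy=p_v$ and $x$ is a unit of $B$, contradicting (ii). Hence $x$ is not von Neumann regular and $\KP_{\CC}(\L)$ is not locally matricial.

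Claims (i) and (ii) are short computations of the same flavour: an element of $B$ has a finite homogeneous decomposition, whereas the relation $x=p_v-s_\mu$ forces a nonzero component to propagate along an infinite arithmetic progression $\{f+nce_2\}$ of degrees. For (i), if $ax=0$ with $a\in B$ then $a=as_\mu$, hence $a=as_{\mu^*}=\cdots=as_{(\mu^n)^*}$ for every $n$; comparing degree-$f$ components gives $a_f=a_{f+nce_2}\,s_{(\mu^n)^*}$, which vanishes once $n$ is large enough that $f+nce_2$ is outside the finite support of $a$, so $a=0$, and the right annihilator is treated symmetrically. For (ii), if $xy=p_v$ with $y\in B$ then comparing components gives $y_f=s_\mu y_{f-ce_2}$ for $f\neq0$ and $y_0-s_\mu y_{-ce_2}=p_v$; finiteness of the support of $y$ forces $y_{-nce_2}=0$ for all $n\geq1$, hence $y_0=p_v$, and then inductively $y_{nce_2}=s_\mu^n\neq0$ for all $n\geq0$, again contradicting finiteness of the support.

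The main obstacle is simply keeping this grading bookkeeping transparent; there is no conceptual difficulty, the point being that inside the corner $B$ the element $s_\mu$ behaves like an invertible-but-not-unit element, an exact analogue of the indeterminate in $\CC[z,z^{-1}]$, and this is incompatible with von Neumann regularity. If one prefers, claim (ii) can alternatively be read off from the embedding $\pi_{q,t}$ of Proposition~\ref{KPembeds}: in the corner $p_vC^*(\L)p_v$ the image of $s_\mu$ is a unitary with $1$ in its spectrum, so $p_v-s_\mu$ is already non-invertible in $C^*(\L)$ and hence in $B$; but the direct grading argument has the merit of never leaving $\KP_{\CC}(\L)$.
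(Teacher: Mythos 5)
Your proposal is correct, and for the only genuinely new part of the theorem (non-local-matriciality) it takes a different route from the paper. Both arguments start from the same structural observation: the red cycle $\mu$ through $v$ satisfies $v\L^{|\mu|e_2}=\{\mu\}$, so (KP3) and (KP4) give $s_\mu s_{\mu^*}=s_{\mu^*}s_\mu=p_v$, i.e.\ $s_\mu$ is a ``Laurent generator'' of the corner $p_v\KP_\CC(\L)p_v$. The paper packages this by mapping the Leavitt path algebra of a single loop into $\KP_\CC(\L)$ (sending $w\mapsto p_v$, $e\mapsto s_\mu$, $e^*\mapsto s_{\mu^*}$), checking the map is graded, and invoking the graded-uniqueness theorem for $1$-graphs to conclude that $s_\mu$ generates a copy of $\CC[x,x^{-1}]$; hence $s_\mu$ lies in no finite-dimensional subalgebra, which rules out local matriciality (and in fact local finite-dimensionality). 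You instead show by a direct grading computation in the corner $B=p_v\KP_\CC(\L)p_v$ that $p_v-s_\mu$ is not von Neumann regular --- your annihilator and non-invertibility bookkeeping is correct, the reduction to $y\in B$ is valid since $x=p_vxp_v$, and the nonvanishing $s_{\mu^n}\neq 0$ follows from $s_{(\mu^n)^*}s_{\mu^n}=p_v\neq 0$ --- and then use that matricial algebras are regular and regularity passes to directed unions. What each buys: the paper's argument is shorter given that the graded-uniqueness machinery is already in place and yields the stronger conclusion that $\KP_\CC(\L)$ is not even locally finite-dimensional; yours is more self-contained (no appeal to uniqueness theorems for $1$-graphs) and yields the different strengthening that $\KP_\CC(\L)$ fails von Neumann regularity. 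One small caveat: your optional $C^*$-algebraic aside asserts that the unitary image of $s_\mu$ has $1$ in its spectrum; that needs a justification (e.g.\ rotation-invariance of the spectrum under the gauge action forces the spectrum to be all of $\TT$), but since it is only offered as an alternative to your complete algebraic argument for claim (ii), it does not affect correctness.
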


\begin{proof}
Corollary~\ref{Bdsimple} implies that $\KP_{\CC}(\L)$ is simple, and Proposition~\ref{notpi} that it is not purely infinite. To see that it is not locally matricial, consider the element $s_\mu$ associated to a single red cycle $\mu$. Since $v:=r(\mu)=s(\mu)$ receives just one red path of length $|\mu|$, namely $\mu$, the Kumjian-Pask relation (KP4) at $v$  for $n=|\mu|e_2$ (which only involves red paths) says that $p_v=s_\mu s_\mu^*$. Thus if $E$ is the directed graph consisting of a single vertex $w$ and a single loop $e$ at $w$ and $(p,s)$ is the universal Kumjian-Pask $\Lambda$-family in $\KP_{\CC}(\L)$, then there is a homomorphism $\pi$ of the Leavitt path algebra $L_{\CC}(E)$ into $\KP_{\CC}(\L)$ which takes $w$ to $p_w$, $e$ to $s_\mu$ and ${e^*}$ to $s_{\mu^*}$. Since the image algebra $A$ is graded by $A_m:=A\cap \KP_{\CC}(\L)_{m|\mu|}=\lsp\{\mu^m\}$, and since $p_w\not=0$, the graded-uniqueness theorem for ordinary graphs implies that $\pi$ is injective. But $e$ generates the infinite-dimensional algebra $L_{\CC}(E)=\CC[x,x^{-1}]$, so $s_{\mu}$ does not lie in a finite-dimensional subalgebra.
\end{proof}

\begin{remark}
The main examples of rank-2 Bratteli diagrams are the families $\{\Lambda_\theta:\theta\in (0,1)\setminus\QQ\}$  in \cite[Example~6.5]{PRRS} and $\{\L(\mathbf{m}):\mathbf{m}\text{ is supernatural}\}$ in \cite[Example~6.7]{PRRS}. These provide models for two important families of $C^*$-algebras called the irrational rotation algebras $A_\theta$ and the Bunce-Deddens algebras $\BD(\mathbf{m})$. That their  $C^*$-algebras satisfy $C^*(\Lambda_\theta)\cong A_\theta$ and $C^*(\L(\mathbf{m}))\cong \BD(\mathbf{m})$ is proved in \cite{PRRS} by showing that the graph algebras are A$\TT$-algebras with real rank zero, hence fall into the class of $C^*$-algebras covered by a classification theorem of Elliott \cite{Ell}, computing their $K$-theory, and comparing this $K$-theory with the known $K$-theory of $A_\theta$ and $\BD(\mathbf{m})$. So the proofs will not carry over to Kumjian-Pask algebras.
\end{remark}

\end{document}